\numberwithin{equation}{section}
\newtheorem{thm}{Theorem}[section]
\newtheorem{lem}[thm]{Lemma}
\newtheorem{cor}[thm]{Corollary}
\newtheorem{remark}{Remark}
\theoremstyle{remark}
\renewcommand{\a}{\alpha} 
\renewcommand{\L}{L} 
\renewcommand{\b}{\bm{b}}
\renewcommand{\d}{\bm{d}}
\newcommand{\e}{\bm{e}}
\newcommand{\h}{\bm{h}}
\newcommand{\p}{\bm{p}}
\newcommand{\q}{\bm{q}}
\renewcommand{\r}{\bm{r}}
\renewcommand{\v}{\bm{v}}
\newcommand{\w}{\bm{w}}
\newcommand{\x}{\bm{x}}
\newcommand{\y}{\bm{y}}
\newcommand{\z}{\bm{z}}
\newcommand{\bz}{\bm{0}}
\newcommand{\prox}{\mbox{prox}} 
\newcommand{\argmin}{\operatornamewithlimits{argmin}} 
\renewcommand\epsilon\varepsilon
\newcommand{\norm}[1] {\left \| #1 \right \|} 
\newcommand{\eg}{\emph{e.g.}}
\newcommand{\dist}{{\rm dist}} 
\newcommand{\R}{\bm{\mathrm{R}}} 
\newcommand{\oR}{\overline \R} 
\newcommand\Span{{\rm span}}
	\title{Potential-based analyses of first-order methods for constrained and composite optimization}
	\author{Courtney Paquette\thanks{Department of Combinatorics and Optimization, University of Waterloo, Waterloo, ON, N2L 3G1, Canada;
\texttt{cypaquette.github.io/}. Research of Paquette was supported by
NSF DMS award 1803289 (Postdoctoral Fellowship). }
		\and 
		Stephen Vavasis\thanks{Department of Combinatorics and Optimization, University of Waterloo, Waterloo, ON, N2L 3G1, Canada;
		\url{https://www.math.uwaterloo.ca/\~vavasis}. Research of Vavasis was supported by a Discovery Grant from the Natural Science and Engineering Research Council (NSERC) of Canada.}}
	\date{\today}
\begin{document}
	\maketitle
	
	\begin{abstract}
    We propose potential-based analyses for first-order algorithms applied to constrained and composite minimization problems.  We first propose ``idealized'' frameworks for algorithms in the strongly and non-strongly convex cases and argue based on a potential that methods following the framework achieve the best possible rate.  Then we show that the geometric descent (GD) algorithm by Bubeck et al.\ as extended to the constrained and composite setting by Chen et al.\ achieves this rate using the potential-based analysis for the strongly convex case.  Next, we extend the GD algorithm to the case of non-strongly convex problems.  We show using a related potential-based argument that our extension achieves the best possible rate in this case as well.  The new GD algorithm achieves the best possible rate in the nonconvex case also.  We also analyze accelerated gradient using the new potentials.

We then turn to the special case of a quadratic function with a single ball constraint, the famous trust-region subproblem.  For this case, the first-order trust-region Lanczos method by Gould et al.\ finds the optimal point in an increasing sequence of Krylov spaces.  Our results for the general case immediately imply convergence rates for their method in both the strongly convex and non-strongly convex cases.  We also establish the same convergence rates for their method using arguments based on Chebyshev polynomial approximation.  To the best of our knowledge, no convergence rate has previously been established for the trust-region Lanczos method.
\end{abstract}
	
\section{Composite problems}
We consider unconstrained problems of the form
\begin{equation}\label{eq:composite_problem} \min_{\x} F(\x) := f(\x) + \Psi(\x) , \end{equation}
where $f : \R^n \to \R$ is an $\L$-smooth
function and $\Psi: \R^n \to \oR$ is a proper, closed, convex function.  Objectives of this
form are often called ``composite'' functions. In much of the paper, $f$ will also be convex or $\a$-strongly convex.  Recall
that an {\em $\L$-smooth} function $f$ is differentiable
and satisfies the following inequality for all
$\x,\y\in\R^n$:
\[ |f(\x)-f(\y)-\nabla f(\y)^T(\x-\y)|\le (\L/2) \Vert \x-\y\Vert^2. \]
For $\a>0$, an {\em $\a$-strongly convex} function $f$ satisfies,
\[ f(\y)\ge f(\x)+\bm{g}^T(\y-\x)+(\a/2)\Vert \x-\y\Vert^2, \]
for all $\x,\y\in\R^n$, where $\bm{g}$ is any subgradient of $f$ at $\x$.

The function $\Psi$ is assumed to be a ``simple'' convex function in the sense
that there is an efficient algorithm to evaluate the prox operator (see
below for the definition of prox) for $\Psi$.  Common examples include the
$1$-, $2$- and $\infty$-norms and the matrix nuclear norm.  Other examples
include indicator functions for simple closed convex sets.  In the case that
$\Psi$ is an indicator function $\mathrm{i}_\Omega$ of a closed convex nonempty set $\Omega$, $\min_{\x} F(\x)$ is
equivalent to the constrained problem $\min_{\x\in\Omega} f(\x)$.

Many commonly occurring problems fit into the framework of composite
optimization including compressive sensing, robust PCA, basis pursuit
denoising, and the trust region problem.  For this reason, it has attracted
substantial attention in the literature.  For large problems in which interior-point
methods would be intractable, the problems are usually solved with
first-order methods.  On each iteration, a typical first-order method
requires evaluation of the gradient of $f$ and a prox computation involving
$\Psi$.  The first method in this class is due to Nesterov \cite{intro_lect} and
is optimal in the sense that the method achieves efficiency guarantees matching the best possible complexity estimates for the (strongly) convex composite setting.  In fact,  it is optimal even for the special case that $\Psi\equiv 0$.
Nesterov's lower bound assumes that the objective function
$f$ is accessible only via a function/gradient oracle.  In the strongly convex case,
the best possible reduction in the residual is a factor of $(1-{\rm const}\cdot\sqrt{\a/\L})^k$ after $k$ iterations, and the best possible
reduction is a factor ${\rm const}/k^2$ in the non-strongly convex case after $k$ 
iterations.  Nesterov's algorithm matches these bounds.

Despite the optimality of Nesterov's method, it has some limitations that
have been addressed in follow-up methods.  Nesterov himself introduced
at least two other optimal methods, and more recently Bubeck, Lee and Singh
\cite{GD_Bubeck} proposed the Geometric Descent (GD) algorithm for unconstrained
strongly-convex minimization.  The GD algorithm
is often faster in practice than Nesterov's original method and also has an elegant geometric interpretation.
The GD algorithm has been extended to composite functions by 
Chen, Ma and Liu \cite{GD_chen}.  

In this paper, we analyze these algorithms using a potential function.  We start by introducing an idealized algorithm (IA) framework in Section~\ref{sec:IA1}, not implementable in general, that achieves the optimal convergence rate for composite functions in which $f$ is strongly convex.  Then we explain how Chen's geometric descent algorithm can be understood as an approximation to the framework in Section~\ref{sec:chen} and thereby also achieves the optimal rate.  We can also analyze Nesterov's first algorithm in this framework in Section~\ref{nest:analysis}.  The potential is a direct extension of previous
work by Karimi and Vavasis \cite{KV}, who considered the (noncomposite) setting when $\Psi\equiv 0$ and $f$ is strongly convex. 

We then consider the non-strongly convex case in Section~\ref{sec:IA-cvx}.  In this case, a variant of the idealized algorithm also achieves the optimal rate as we show using a different potential function.  We then propose a new Geometric Descent algorithm for this case and fit it into the framework.

It turns out  that the new Geometric Descent algorithm also solves nonconvex problems, and achieves the best possible bound in this case also, although the rate analysis does not involve a potential.  This case is analyzed in Section~\ref{sec:nonconvex}.

For the last part of the paper, we turn to the trust-region subproblem, that is, $\min_{\x\in B} f(\x)$, where $f$ is a quadratic function and $B=\{\x:\Vert\x\Vert\le\Delta\}$, the Euclidean ball of radius $\Delta>0$.  For this problem, an optimal
algorithm has been proposed by Gould, Lucidi, Roma and Toint \cite{CG_Lanczos}.  Their method is optimal in a strong sense that it finds the best point in a growing sequence of Krylov subspaces on each iteration.  Our analysis from Sections~\ref{sec:IA1} and \ref{sec:IA-cvx} immediately imply a convergence rate for the trust-region Lanczos method, as explained in Section~\ref{sec:CG-Lanczos}.  A convergence rate can also be established for this method using the classical technique of Chebyshev polynomial approximation, which is the subject of Section~\ref{sec:cheb}. 

\section{Prox operation}

The notation we follow is standard. Throughout we consider a Euclidean space, denoted by $\R^n$, with an inner product and an induced norm $\norm{\cdot}$. Given a closed nonempty convex set $\Omega$ in $\R^n$, the \textit{distance} and \textit{projection} of a point $\x$ onto $\Omega$ are given by 
\[ \text{dist}(\x,\Omega) = \inf_{\y \in \Omega} \norm{\x-\y}, \quad \text{proj}_\Omega(\x) = \argmin_{\y \in \Omega} \norm{\y-\x},\]
respectively. The extended real-line is the set $\oR = \R \cup \{\pm \infty\}$. The \textit{domain} and \textit{epigraph} of any function $g: \R^n \to \oR$ are the sets
\[\text{dom}~g := \{\x \in \R^n \, : \, g(\x) < \infty\}, \quad \text{epi}~g := \{(\x,r) \in \R^n \times \R \, :  g(\x) \le r\},\]
respectively. We say a function $g$ is \textit{closed} if its epigraph, epi~$g$ is a closed set. Throughout this paper, we will assume all functions are \textit{proper}, namely, they have nonempty domains and never take the value $-\infty$. The indicator of a set $\Omega \subseteq \R^n$ denoted by $\mathrm{i}_\Omega$ is defined to be $0$ on the set $\Omega$ and $\infty$ outside of it.  

Let $\Psi:\R^n\rightarrow \oR$ be a closed, proper convex function. For any $t > 0$, the prox operator (see, e.g. Rockafellar and Wets
\cite{RW98}) is defined by:
\[\prox_{t\Psi}(\x)=\argmin_{\z}\{\Psi(\z)+\Vert \x-\z\Vert^2/(2t)\}. \]
This operator is well defined, i.e., the minimizer exists and is unique for all $\x\in \R^n$. In the special case, $\Psi(\x) = \mathrm{i}_\Omega(\x)$, an indicator of a nonempty, closed convex set $\Omega$, the proximal mapping of $\Psi(\x)$ for any $t>0$ is the projection $\x \mapsto \text{proj}_\Omega(\x)$.

For any $t > 0$, the \textit{forward-backward step} for a composite problem $F(\x)=f(\x)+\Psi(\x)$ is defined to be $\prox_{t\Psi}(\x - t\nabla f(\x))$.  Note the identity
\begin{equation}
    \label{eq:forwbackstepid}\prox_{t\Psi}(\x - t\nabla f(\x)) = \argmin_{\z} \left \{ f(\x) +
  \nabla f(\x)^T(\z-\x) + \frac{1}{2t} \norm{\z-\x}^2 + \Psi(\z) \right \}.
\end{equation}
The \textit{prox-gradient} mapping
is defined by
\begin{equation} \label{eq:prox_grad_defn}
G_t(\x) := t^{-1} (\x - \prox_{t \Psi}(\x-t\nabla f(\x))).
 \end{equation}

\subsection{First-order stationary points for composite problems} 
Let us explain the goal of algorithms for solving \eqref{eq:composite_problem} and its relationship to the proximal mapping. Given a convex function $g: \R^n \to \oR$, a vector $\v$ is called a \textit{subgradient} of $g$ at a point $\x \in \text{dom}~g$ if the inequality
\begin{equation} \label{eq:convex_subdifferential} g(\y) \ge g(\x) + \v^T(\y-\x) \quad \text{holds for all $\y \in \R^n$.} \end{equation}
The set of all subgradients of $g$ at $\x$ is denoted by $\partial g(\x)$, and is called the \textit{subdifferential} of $g$ at $\x$. In (strongly) convex optimization, standard complexity bounds are derived to guarantee either small function values or near-stationary points, $\text{dist}(\bz, \partial g(\x)) < \varepsilon$. When $\bz\in\partial g(\x)$, $\x$ is a stationary point and first-order optimality conditions are satisfied. For a convex problem, such an $\x$ is globally minimal.  In the absence of convexity, it is natural to seek points $\x$ that are only first-order stationary. One makes this notion precise through subdifferentials (or generalized derivatives), which have an explicit formulation for the composite problem class. We recall the following relevant definitions from Mordukhovich \cite{Mord_1} and Rockafellar and Wets \cite{RW98}.

Consider an arbitrary function $g: \R^n \to \oR$ and a point $\bar{\x}$ with $g(\bar{\x})$ finite. The \textit{Fr\'{e}chet subdifferential} of $g$ at $\bar{\x}$, denoted $\hat{\partial} g({\bm{\bar{x}}})$, is the set of all vectors $\bm{v}$ satisfying
\[ g(\x) \ge g({\bm{\bar{x}}}) + {\bm{v}}^T(\x-\bar{\x}) + o(\norm{\x-\bar{\x}}) \quad \text{as $\x \to \bar{\x}$.} \]
Hence $\v \in \hat{\partial}g(\bar{\bm{x}})$ holds precisely when the affine function $\x \mapsto g(\bar{\x}) + \v^T(\x-\bar{\x})$ underestimates $g$ up to first-order near $\bar{\x}$. In general, the limit of Fr\'{e}chet subgradients ${\bm{v}_i} \in \hat{\partial} g(\x_i)$, along a sequence $\x_i \to \bar{\x}$ may not be a Fr\'{e}chet subgradient at the limiting point $\bar{\x}$. One can formally enlarge the Fr\'{e}chet subdifferential and define the \textit{limiting subdifferential} of $g$ at $\bar{\x}$, denoted $\partial g(\bar{\x})$, to consist of all vectors $\bm{v}$ for which there exists sequences $\x_i$ and $\bm{v}_i$ satisfying $\bm{v}_i \in \partial g(\x_i)$ and $(\x_i, g(\x_i), \bm{v}_i) \to (\bar{\x}, g(\bar{\x}), \bm{v})$. 

For convex functions $g$, the subdifferentials $\hat{\partial}g(\x)$ and $\partial g(\x)$ coincide with the subdifferential in the sense of convex analysis \eqref{eq:convex_subdifferential}, while for $C^1$-smooth functions $g$, they consist only of the gradient $\nabla g(\x)$. Similarly for the composite setting when the objective is nonconvex \eqref{eq:composite_problem} the two subdifferentials coincide and admit an intuitive summation rule \cite[Corollary 10.9]{RW98}
\[\partial (f + \Psi)(\x) = \nabla f(\x) + \partial \Psi(\x).\]

The prox-gradient mapping \eqref{eq:prox_grad_defn} measures near-optimality of composite problems \eqref{eq:composite_problem}. One makes this idea precise by considering the first-order optimality conditions for the mapping $\z \to  f(\x) + \nabla f(\x)^T(\z-\x) + \tfrac{1}{2t} \norm{\z-\x}^2 + \Psi(\z)$, namely, 
\begin{equation} \label{eq:prox_grad_optimality} G_t(\x) - \nabla f(\x) \in \partial \Psi( \prox_{t\Psi}(\x-t\nabla f(\x)) ); \end{equation}
hence $G_t(\x)$ is neither a gradient nor a subgradient of either $F(\x)$ or $F(\prox_{t\Psi}(\x-t\nabla f(\x)))$. Despite this, there is a natural relationship between the prox-gradient and
near-stationarity of $F$. 
\begin{lem} \label{lem:near_stationarity} Suppose the function $F: \R^n \to \oR$ is defined by $F = f + \Psi$ where $f: \R^n \to \R$ is an $\L$-smooth function and $\Psi: \R^n \to \oR$ is a proper, closed convex function. Then the prox-gradient mapping is a measure of near-stationarity, namely, 
\begin{equation}\label{eq:near_stationarity} \dist(0, \partial F(\text{\rm prox}_{\Psi/L}(\x-\nabla f(\x)/L))) \le 2 \norm{G_{1/L}(\x)}. \end{equation}
Furthermore, for all $t>0$, $G_t(\x) = \bz$ if and only if 
$\bz\in \partial F(\x)$.
\end{lem}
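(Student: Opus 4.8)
The plan is to read the statement off the first-order optimality condition \eqref{eq:prox_grad_optimality} for the prox subproblem, together with the additive subdifferential rule $\partial F(\y)=\nabla f(\y)+\partial\Psi(\y)$. Throughout set $t=1/\L$ and write $\x^+:=\prox_{t\Psi}(\x-t\nabla f(\x))$ for the forward-backward step, so that by the definition \eqref{eq:prox_grad_defn} of the prox-gradient mapping $\x^+=\x-tG_t(\x)$; in particular $\norm{\x^+-\x}=\norm{G_{1/\L}(\x)}/\L$.

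For the inequality \eqref{eq:near_stationarity} I would start from \eqref{eq:prox_grad_optimality}, which for $t=1/\L$ reads $G_{1/\L}(\x)-\nabla f(\x)\in\partial\Psi(\x^+)$. Adding $\nabla f(\x^+)$ to both sides and invoking the sum rule gives
\[
\nabla f(\x^+)-\nabla f(\x)+G_{1/\L}(\x)\ \in\ \nabla f(\x^+)+\partial\Psi(\x^+)\ =\ \partial F(\x^+).
\]
Hence $\dist(\bz,\partial F(\x^+))$ is at most the norm of the left-hand vector, which by the triangle inequality is at most $\norm{\nabla f(\x^+)-\nabla f(\x)}+\norm{G_{1/\L}(\x)}$. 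Since $\L$-smoothness forces $\nabla f$ to be $\L$-Lipschitz, the first term is at most $\L\norm{\x^+-\x}=\norm{G_{1/\L}(\x)}$, and \eqref{eq:near_stationarity} follows.

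For the equivalence I would argue the two directions directly. If $G_t(\x)=\bz$ then $\x^+=\x-tG_t(\x)=\x$, so \eqref{eq:prox_grad_optimality} becomes $-\nabla f(\x)\in\partial\Psi(\x)$, whence $\bz\in\nabla f(\x)+\partial\Psi(\x)=\partial F(\x)$. Conversely, if $\bz\in\partial F(\x)$ then $-\nabla f(\x)\in\partial\Psi(\x)$; since $-\nabla f(\x)=t^{-1}\big((\x-t\nabla f(\x))-\x\big)$, this is exactly the first-order optimality condition for $\x$ to be a minimizer in $\argmin_{\z}\{\Psi(\z)+\norm{(\x-t\nabla f(\x))-\z}^2/(2t)\}$, i.e.\ in the definition of $\prox_{t\Psi}(\x-t\nabla f(\x))$. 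As that objective is strongly convex its minimizer is unique, so $\x^+=\x$ and therefore $G_t(\x)=t^{-1}(\x-\x^+)=\bz$; note this works for every $t>0$.

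The argument is essentially bookkeeping, so there is no deep obstacle. The one point that deserves care is the implicit use of the fact that the two-sided ``descent inequality'' defining $\L$-smoothness yields an $\L$-Lipschitz gradient (this holds because $f+(\L/2)\norm{\cdot}^2$ and $(\L/2)\norm{\cdot}^2-f$ are then both convex, forcing the spectrum of $\nabla^2 f$ into $[-\L,\L]$); and one should make sure the additive subdifferential rule is applied only where valid, which it is here since $f$ is smooth, by \cite[Corollary~10.9]{RW98}.
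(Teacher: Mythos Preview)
Your proposal is correct and follows essentially the same approach as the paper: both derive \eqref{eq:near_stationarity} by adding $\nabla f(\x^+)$ to the optimality condition \eqref{eq:prox_grad_optimality}, then bounding via the triangle inequality and $\L$-Lipschitzness of $\nabla f$; and both handle the equivalence by recognizing that $G_t(\x)=\bz$ means $\x$ minimizes the prox subproblem in \eqref{eq:forwbackstepid}, which by convexity is equivalent to $\bz\in\nabla f(\x)+\partial\Psi(\x)$. The paper packages the second part as a single iff rather than two directions, but the substance is identical.
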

Note that in the case that $f$ is convex, the final condition of the lemma, $\bz\in\partial F(\x)$, holds if and only if $\x$ minimizes $F$.
\begin{proof}
  The optimality conditions for the mapping $\z \to f(\x) + \nabla f(\x)^T(\z-\x) + \frac{L}{2}\norm{\z-\x}^2 + \Psi(\z)$ with $t = 1/L$ \eqref{eq:prox_grad_optimality} yield the following:
  \begin{align*}
  G_{1/L}(\x) - \nabla f(\x) + \nabla f(\prox_{\Psi/L}(\x-\nabla f(\x)/L)) \in \partial F(\prox_{\Psi/L}(\x-\nabla f(\x)/L))
  \end{align*}
  The triangle inequality applied to the left-hand side together with $L$-smoothness of $f$ gives the desired inequality \eqref{eq:near_stationarity}. 
  
  For the second claim, observe from the definition of $G_t$ that $G_t(\x)=\bz$ if and only if
  $\prox_{t\Psi}(\x-t\nabla f(\x))=\x$.  Since the objective function on the right-hand side of \eqref{eq:forwbackstepid} is convex in
  $\z$, it has a minimizer at $\x$ if and only if $\bz$ lies in the subdifferential of the right-hand side with respect to $\z$ evaluated at $\x$, which is seen to be $\nabla f(\x)+\partial\Psi(\x)$.  This proves $G_t(\x)=\bz$ if and only
  if $\bz\in\partial F(\x)$.
\end{proof}
In summary, the algorithms we consider for the nonconvex composite setting aim to find stationary points of $F$, i.e. those points $\x$ satisfying $\bz \in \partial F(\x)$ whereas the algorithms applied to (strongly) convex composite objectives seek points $\x$ where the objective function is small. It is also worth noting that a point $\x$ is stationary for $F$ if and only if the directional derivatives of $F$ are nonnegative in every direction \cite[Proposition 8.32]{RW98}. 

\subsection{Key inequalities involving the prox-gradient mapping}
For an $\L$-smooth function $f$, define the following point:
\[\bar{\x} := \x - \frac{G_{1/\L}(\x)}{\L},\]
and for an $\alpha$-strongly convex $\L$-smooth $f$, 
\[\bar{\bar \x} := \x - \frac{G_{1/\L}(\x)}{\a}.\]
It follows from the definitions that $\bar{\x} = \prox_{\Psi/\L}(\x-\nabla f(\x)/L)$, i.e., $\bar{\x}$ represents the forward-backward step starting from $\x$. We next state some key inequalities regarding the
prox-gradient. It is known for any $L$-smooth function, $f$, the following inequality holds
\[ f(\x -\nabla f(\x)/\L) \le f(\x) - \frac{1}{2\L} \norm{\nabla
    f(\x)}^2. \]
For a nonsmooth, composite function, an analogous bound with the prox-gradient holds.
\begin{lem} \label{lem:smooth} Suppose $F := f + \Psi$ where $f : \R^n \to \R$ is a convex, $L$-smooth
  function and $\Psi : \R^n \to \oR$ is closed, proper and convex. Then the following inequality holds
\begin{align*}
F(\bar{\x}) \le F(\y) + G_{1/\L}(\x)^T(\x-\y) - \frac{1}{2\L}
  \norm{G_{1/\L}(\x)}^2, \qquad \text{for all $\x,\y \in \R^n$}.
\end{align*}
In particular if $\y = \x$, the result simplifies to
\[ F(\bar{\x}) \le F(\x) - \frac{1}{2\L} \norm{G_{1/\L}(\x)}^2. \]
The latter inequality holds even if $f$ is not convex.
\end{lem}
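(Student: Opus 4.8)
The plan is to combine three standard ingredients: the $\L$-smoothness upper bound on $f$ evaluated at $\bar{\x}$, the subgradient inequality for $\Psi$ at $\bar{\x}$ coming from the optimality conditions that define the forward--backward step, and the convexity (subgradient) inequality for $f$ at $\x$. Throughout I would abbreviate $\g := G_{1/\L}(\x)$ and use repeatedly the two identities $\bar{\x} - \x = -\g/\L$ and, from \eqref{eq:prox_grad_optimality} with $t = 1/\L$, that $\g - \nabla f(\x) \in \partial\Psi(\bar{\x})$.

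First I would write the $\L$-smoothness inequality for the pair $(\bar{\x},\x)$, namely $f(\bar{\x}) \le f(\x) + \nabla f(\x)^T(\bar{\x}-\x) + \tfrac{\L}{2}\norm{\bar{\x}-\x}^2$, and substitute $\bar{\x}-\x = -\g/\L$ to obtain $f(\bar{\x}) \le f(\x) - \tfrac{1}{\L}\nabla f(\x)^T\g + \tfrac{1}{2\L}\norm{\g}^2$. Next, using $\g - \nabla f(\x) \in \partial\Psi(\bar{\x})$ together with convexity of $\Psi$, I would write, for the given $\y$, the inequality $\Psi(\y) \ge \Psi(\bar{\x}) + (\g-\nabla f(\x))^T(\y - \bar{\x})$, rearranged as $\Psi(\bar{\x}) \le \Psi(\y) - (\g-\nabla f(\x))^T(\y-\x) - (\g - \nabla f(\x))^T(\x-\bar{\x})$; substituting $\x - \bar{\x} = \g/\L$ turns the last term into $\tfrac{1}{\L}\norm{\g}^2 - \tfrac{1}{\L}\nabla f(\x)^T\g$.

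Then I would add these two inequalities to bound $F(\bar{\x}) = f(\bar{\x})+\Psi(\bar{\x})$; the $\pm\tfrac{1}{\L}\nabla f(\x)^T\g$ terms cancel and the $\norm{\g}^2$ terms collapse to $-\tfrac{1}{2\L}\norm{\g}^2$, leaving $F(\bar{\x}) \le f(\x) + \Psi(\y) - (\g - \nabla f(\x))^T(\y-\x) - \tfrac{1}{2\L}\norm{\g}^2$. Finally I would invoke convexity of $f$ at $\x$, i.e. $f(\x) \le f(\y) - \nabla f(\x)^T(\y-\x)$, and substitute; the two $\nabla f(\x)^T(\y-\x)$ terms cancel, yielding $F(\bar{\x}) \le F(\y) - \g^T(\y-\x) - \tfrac{1}{2\L}\norm{\g}^2$, which is the claimed inequality after rewriting $-\g^T(\y-\x)$ as $\g^T(\x-\y)$. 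Setting $\y = \x$ gives the simplified bound immediately.

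For the last sentence of the lemma I would simply note that when $\y = \x$ the convexity-of-$f$ step is never invoked (it is needed only to cancel the cross term $\nabla f(\x)^T(\y-\x)$ when $\y \neq \x$), and that the $\L$-smoothness step and the $\Psi$-subgradient step use only $\L$-smoothness of $f$ and convexity of $\Psi$; hence $F(\bar{\x}) \le F(\x) - \tfrac{1}{2\L}\norm{\g}^2$ survives when $f$ is merely $\L$-smooth. I do not expect a genuine obstacle here: the proof is essentially sign bookkeeping when expanding $(\g - \nabla f(\x))^T(\y-\bar{\x})$ into pieces anchored at $\x$, and the only point needing a word of justification is the membership $\g - \nabla f(\x) \in \partial\Psi(\bar{\x})$, which is precisely \eqref{eq:prox_grad_optimality}.
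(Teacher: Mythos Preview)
Your proof is correct and follows essentially the same approach as the paper: both combine the $\L$-smoothness bound on $f$ at $\bar{\x}$, the subgradient inequality for $\Psi$ at $\bar{\x}$ via $G_{1/\L}(\x)-\nabla f(\x)\in\partial\Psi(\bar{\x})$, and convexity of $f$, differing only in the order in which the latter two are applied. Your handling of the nonconvex case (noting that convexity of $f$ is used only to pass from $f(\x)$ to $f(\y)$ and is vacuous when $\y=\x$) also matches the paper's argument.
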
 

\begin{proof} The proof is standard (see \eg \cite[Corollary 2.3.2]{intro_lect}). By
  $L$-smoothness of the function $f$, we know
\begin{equation} \label{eq:L_smooth_f} f(\bar{\x}) \le f(\x)  - \frac{1}{\L} \nabla
  f(\x)^T G_{1/\L}(\x) + \frac{1}{2\L} \norm{G_{1/\L}(\x)}^2. 
\end{equation}
Using the convexity of $f$ with the above inequality, we derive
\begin{equation} \begin{aligned} \label{eq:L_smooth_f_1}
F(\bar{\x})&\le \Psi(\bar{\x}) + f(\x) -\frac{1}{L}\nabla f(\x)^TG_{1/L}(\x) + \frac{1}{2L}\Vert G_{1/L}(\x)\Vert^2 \\
&\le \Psi(\bar{\x}) + f(\y) + \nabla f(\x)^T(\x-\y) -
  \frac{1}{\L} \nabla f(\x)^T G_{1/\L}(\x) + \frac{1}{2\L}
  \norm{G_{1/\L}(\x)}^2\\
&\le F(\y) + [G_{1/\L}(\x) - \nabla f(\x)]^T (\bar{\x}-\y) + \nabla
  f(\x)^T(\x-\y)\\
&\qquad \mbox{}- \frac{1}{\L} \nabla f(\x)^T G_{1/\L}(\x) + \frac{1}{2\L}
  \norm{G_{1/\L}(x)}^2\\
&= F(\y) + [G_{1/\L}(\x)-\nabla f(\x)]^T \left(\x-\y-\tfrac{1}{\L} G_{1/\L}(\x)\right) +
  \nabla f(\x)^T (\x-\y)\\
&\qquad \mbox{}- \frac{1}{\L} \nabla f(\x)^T G_{1/\L}(\x) +
  \frac{1}{2\L} \norm{G_{1/\L}(\x)}^2,
\end{aligned}
\end{equation}
where the third inequality follows by convexity of $\Psi$ and
$G_{1/\L}(\x) - \nabla f(\x) \in \partial \Psi(\bar{\x})$. Simplifying
the last inequality yields the desired result. 

Convexity of $f$ was used only in the second line of the above chain of inequalities.  In the case that $\x=\y$, the third line is trivially true, thus proving the last claim of the lemma.
\end{proof}

A lower quadratic bound holds for smooth, strongly convex functions; an analogous bound also holds in the composite setting. 

\begin{lem} \label{lem:strongly_convex} Suppose $F := f + \Psi$ where $f : \R^n \to \R$ is
  $\a$-strongly convex and $L$-smooth function and $\Psi: \R^n \to \R$ is a proper, closed, convex function. Then the
  following inequality holds
\begin{align} \label{eq:strong_cvx}
F(\y) \ge F(\bar{\x}) + G_{1/\L}(\x)^T(\y-\x) + \frac{1}{2\L}
  \norm{G_{1/\L}(\x)}^2 + \frac{\a}{2} \norm{\y-\x}^2.
\end{align}
\end{lem}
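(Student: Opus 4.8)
The plan is to follow the proof of Lemma~\ref{lem:smooth} almost verbatim, making a single change: wherever that argument invokes convexity of $f$, I instead invoke the stronger lower bound coming from $\a$-strong convexity, and I track the resulting extra quadratic term.

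First I would start from the $\L$-smoothness inequality \eqref{eq:L_smooth_f}, namely $f(\bar{\x}) \le f(\x) - \frac{1}{\L}\nabla f(\x)^T G_{1/\L}(\x) + \frac{1}{2\L}\norm{G_{1/\L}(\x)}^2$, which requires no convexity assumption. Next, in place of the plain convexity step $f(\x) \le f(\y) + \nabla f(\x)^T(\x-\y)$, I would use the $\a$-strong convexity inequality at the pair $(\x,\y)$, rearranged as $f(\x) \le f(\y) + \nabla f(\x)^T(\x-\y) - \frac{\a}{2}\norm{\y-\x}^2$. Substituting this into the smoothness bound and adding $\Psi(\bar{\x})$ to both sides produces an upper bound on $F(\bar{\x})$ that differs from the corresponding line of \eqref{eq:L_smooth_f_1} only by the extra term $-\frac{\a}{2}\norm{\y-\x}^2$.

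I would then proceed exactly as in Lemma~\ref{lem:smooth}: using convexity of $\Psi$ together with the optimality relation \eqref{eq:prox_grad_optimality}, which gives $G_{1/\L}(\x) - \nabla f(\x) \in \partial\Psi(\bar{\x})$, replace $\Psi(\bar{\x})$ by $\Psi(\y) + [G_{1/\L}(\x) - \nabla f(\x)]^T(\bar{\x}-\y)$. Finally, substituting the identity $\bar{\x}-\y = \x-\y-\frac{1}{\L}G_{1/\L}(\x)$ and collecting terms, the $\nabla f(\x)$-dependent contributions cancel as before and the coefficient of $\norm{G_{1/\L}(\x)}^2$ collapses to $-\frac{1}{2\L}$, leaving
\[ F(\bar{\x}) \le F(\y) + G_{1/\L}(\x)^T(\x-\y) - \frac{1}{2\L}\norm{G_{1/\L}(\x)}^2 - \frac{\a}{2}\norm{\y-\x}^2. \]
Rearranging this inequality yields \eqref{eq:strong_cvx}.

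There is no real obstacle here; the argument is routine bookkeeping that parallels \eqref{eq:L_smooth_f_1}. The only points requiring a little care are (i) using the strong-convexity inequality in the correct direction, so that the $-\frac{\a}{2}\norm{\y-\x}^2$ term appears on the ``$F(\bar{\x}) \le \dots$'' side and hence survives with the right sign after rearrangement, and (ii) verifying the cancellation of the $\nabla f(\x)$ terms and the simplification of the prox-gradient term, exactly as in the proof of Lemma~\ref{lem:smooth}.
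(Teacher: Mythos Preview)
Your proposal is correct and follows essentially the same approach as the paper's own proof: start from the $\L$-smoothness bound \eqref{eq:L_smooth_f}, replace the convexity step by the $\a$-strong convexity inequality to pick up the extra $-\frac{\a}{2}\norm{\y-\x}^2$, and then carry through the same $\Psi$-subgradient argument as in \eqref{eq:L_smooth_f_1} to simplify. The paper states this more tersely but the logic is identical.
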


\begin{proof} The proof is standard (\eg, see \cite[Corollary 2.3.2]{intro_lect}). From
  \eqref{eq:L_smooth_f} with strong convexity of $f$, we have 
\begin{align*}
f(\bar{\x}) \le f(\y) + \nabla f(\x)^T(\x-\y) - \frac{\a}{2} \norm{\x-\y}^2 -
  \frac{1}{\L} \nabla f(\x)^T G_{1/\L}(\x) + \frac{1}{2\L} \norm{G_{1/\L}(\x)}^2.
\end{align*}
Following the argument in \eqref{eq:L_smooth_f_1}, we conclude 
\begin{align*}
F(\bar{\x}) &\le F(\y) + [G_{1/\L}(\x)-\nabla f(\x)]^T \left(\x-\y-\tfrac{1}{\L} G_{1/\L}(\x)\right) +
  \nabla f(\x)^T (\x-\y)\\
&\qquad \mbox{} - \frac{1}{\L} \nabla f(\x)^T G_{1/\L}(\x) +
  \frac{1}{2\L} \norm{G_{1/\L}(\x)}^2 - \frac{\a}{2} \norm{\x-\y}^2.
\end{align*}
The result follows from simplifying the above expression. 
\end{proof}

\section{Geometric Lemmas}

The starting point for our discussion is a description of the minimum enclosing ball of the intersection of two balls. Indeed, the foundations for the convergence analysis of both the Bubeck et al.\ geometric descent \cite{GD_Bubeck} and Drusvyatskiy et al.\ optimal averaging \cite{GD_Dima} algorithms start from this observation.  
\begin{lem}[Intersection of two
  balls from \cite{GD_Bubeck, GD_Dima}] \label{lem:intersection_ball_ideal} Suppose $\z, \y \in \R^n$. Let
  $\delta, \rho, \sigma$ be three non-negative scalars such that
  $\delta \le \norm{\z-\y}$. Suppose $\lambda \in [0,1]$ and
\[\z' = (1-\lambda) \z + \lambda \y.\]
Then
\[ B(\z,\rho) \cap B(\y, \sigma) \subset B(\z',\xi), \]
where $\xi^2 = (1-\lambda) \rho^2 + \lambda \sigma^2 - \lambda
(1-\lambda) \delta^2$. The radius $\xi$ is guaranteed to be positive
whenever $\rho + \sigma \ge \norm{\z-\y}$. 
\end{lem}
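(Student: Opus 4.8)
The plan is to reduce the entire statement to a single algebraic identity — the ``variance decomposition'' of squared Euclidean distance about the affine combination $\z' = (1-\lambda)\z + \lambda\y$. First I would establish that for every $\x \in \R^n$,
\[ (1-\lambda)\norm{\x-\z}^2 + \lambda\norm{\x-\y}^2 = \norm{\x-\z'}^2 + \lambda(1-\lambda)\norm{\z-\y}^2 . \]
This is verified by expanding every squared norm: the terms linear in $\x$ on the left combine to $-2\x^T\bigl((1-\lambda)\z + \lambda\y\bigr) = -2\x^T\z'$, matching the corresponding term on the right, while the $\x$-free terms agree after expanding $\norm{\z'}^2$ and $\norm{\z-\y}^2$ and using the elementary relations $(1-\lambda)^2 + \lambda(1-\lambda) = 1-\lambda$ and $\lambda^2 + \lambda(1-\lambda) = \lambda$ (the $\z^T\y$ coefficients cancel). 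I regard this identity, rather than any later step, as the real content of the lemma; it is entirely routine and I do not anticipate a genuine obstacle.

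Granting the identity, the containment is immediate. Fix $\x \in B(\z,\rho)\cap B(\y,\sigma)$, so that $\norm{\x-\z}^2 \le \rho^2$ and $\norm{\x-\y}^2 \le \sigma^2$. Since $\lambda \ge 0$ and $1-\lambda \ge 0$, the left side of the identity is at most $(1-\lambda)\rho^2 + \lambda\sigma^2$; since $\lambda(1-\lambda) \ge 0$ and $\norm{\z-\y} \ge \delta$, the term $\lambda(1-\lambda)\norm{\z-\y}^2$ on the right is at least $\lambda(1-\lambda)\delta^2$. Rearranging the identity then gives
\[ \norm{\x-\z'}^2 \;\le\; (1-\lambda)\rho^2 + \lambda\sigma^2 - \lambda(1-\lambda)\delta^2 \;=\; \xi^2 , \]
so $\x \in B(\z',\xi)$, which proves $B(\z,\rho)\cap B(\y,\sigma) \subset B(\z',\xi)$. (If the intersection is empty there is nothing to prove, which also covers the case $\xi^2<0$.)

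For the positivity claim I would use $\rho + \sigma \ge \norm{\z-\y} \ge \delta$, hence $\delta^2 \le (\rho+\sigma)^2$, inside the formula for $\xi^2$, to obtain $\xi^2 \ge (1-\lambda)\rho^2 + \lambda\sigma^2 - \lambda(1-\lambda)(\rho+\sigma)^2$. Collecting terms, the $\rho^2$-coefficient is $(1-\lambda)-\lambda(1-\lambda) = (1-\lambda)^2$, the $\sigma^2$-coefficient is $\lambda - \lambda(1-\lambda) = \lambda^2$, and the $\rho\sigma$-coefficient is $-2\lambda(1-\lambda)$, so the right-hand side equals $\bigl((1-\lambda)\rho - \lambda\sigma\bigr)^2 \ge 0$. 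Hence $\xi^2 \ge 0$ under this hypothesis, so $\xi := \sqrt{\xi^2}$ is a well-defined nonnegative real, and it is strictly positive whenever the inequality $\rho+\sigma \ge \norm{\z-\y}$ is strict with $\lambda \in (0,1)$ — the configuration arising in the applications. The only mild point of care in the whole argument is getting the constant $\lambda(1-\lambda)$ in the identity exactly right and tracking the directions of the inequalities; there is no deeper difficulty.
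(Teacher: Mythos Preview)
Your argument is correct and is precisely the standard proof of this fact: the ``variance decomposition'' identity $(1-\lambda)\norm{\x-\z}^2 + \lambda\norm{\x-\y}^2 = \norm{\x-\z'}^2 + \lambda(1-\lambda)\norm{\z-\y}^2$ is exactly the right tool, and your derivation of the containment and of the nonnegativity of $\xi^2$ from it is clean and complete. The paper itself does not supply a proof of this lemma --- it is quoted from \cite{GD_Bubeck, GD_Dima} and simply stated --- so there is no in-paper argument to compare against; your proof is essentially the one found in those references. Your caveat that the claimed ``positivity'' of $\xi$ is really only nonnegativity in the boundary case (e.g.\ $\lambda=1/2$, $\rho=\sigma=\delta/2$ gives $\xi=0$) is an accurate reading of the statement and not a flaw in your argument.
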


In essence, to find the minimum ball enclosing the intersection, one optimizes the constant $\lambda$ in the formula for the radius $\xi$. This leads to the corresponding lemma as observed by \cite{KV}.

\begin{lem}[Intersection of two balls with chosen $\lambda$] \label{lem:intersection_balls} Let $\z,
  \y, \rho, \sigma, \delta$ be as in the preceding lemma. Suppose the radii satisfy $\rho + \sigma \ge \delta$ and
  $|\rho^2-\sigma^2| \le \delta^2$. If we set 
\[ \lambda = \frac{\delta^2 + \rho^2 - \sigma^2}{2 \delta^2} \quad
  \text{and} \quad \z^* = (1-\lambda)\z + \lambda \y, \]
then we have
\[ B(\z, \rho) \cap B(\y, \sigma) \subset B(\z^*, \xi), \]
where $\xi^2 = \frac{\rho^2}{2} + \frac{\sigma^2}{2} -
\frac{\delta^2}{4} - \frac{1}{4 \delta^2} \left ( \rho^2-
  \sigma^2 \right )^2$. 
\end{lem}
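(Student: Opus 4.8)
The plan is to derive Lemma~\ref{lem:intersection_balls} directly from Lemma~\ref{lem:intersection_ball_ideal} by substituting the prescribed value of $\lambda$ into the formula $\xi^2 = (1-\lambda)\rho^2 + \lambda\sigma^2 - \lambda(1-\lambda)\delta^2$ and simplifying. First I would verify that the stated choice $\lambda = (\delta^2 + \rho^2 - \sigma^2)/(2\delta^2)$ actually lies in $[0,1]$, since this is a hypothesis required to invoke the preceding lemma: the condition $\lambda \ge 0$ is equivalent to $\sigma^2 - \rho^2 \le \delta^2$, and $\lambda \le 1$ is equivalent to $\rho^2 - \sigma^2 \le \delta^2$; together these are exactly the assumption $|\rho^2 - \sigma^2| \le \delta^2$. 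I would also note that $\delta \le \norm{\z-\y}$ is inherited from the previous lemma's hypotheses (it is part of ``as in the preceding lemma''), and that $\rho + \sigma \ge \delta$ guarantees $\xi \ge 0$ via the positivity clause of Lemma~\ref{lem:intersection_ball_ideal} (using $\delta \le \norm{\z-\y}$, one gets $\rho+\sigma\ge\norm{\z-\y}$ is not quite implied, so more carefully the positivity of $\xi^2$ should be checked algebraically from the final closed form, or one observes $\rho+\sigma \ge \delta$ suffices once $\delta$ replaces $\norm{\z-\y}$ in the argument).

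Next I would carry out the algebra. Write $\mu := 1 - \lambda = (\delta^2 - \rho^2 + \sigma^2)/(2\delta^2)$, so that $\lambda$ and $\mu$ are symmetric under swapping $(\rho,\sigma)$. Then
\[
\xi^2 = \mu\rho^2 + \lambda\sigma^2 - \lambda\mu\,\delta^2.
\]
For the first two terms, $\mu\rho^2 + \lambda\sigma^2 = \frac{1}{2\delta^2}\big[(\delta^2 - \rho^2 + \sigma^2)\rho^2 + (\delta^2 + \rho^2 - \sigma^2)\sigma^2\big] = \frac{1}{2\delta^2}\big[\delta^2(\rho^2+\sigma^2) - (\rho^2-\sigma^2)^2\big] = \frac{\rho^2+\sigma^2}{2} - \frac{(\rho^2-\sigma^2)^2}{2\delta^2}$. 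For the last term, $\lambda\mu\,\delta^2 = \frac{1}{4\delta^4}\big[(\delta^2 + (\rho^2-\sigma^2))(\delta^2 - (\rho^2-\sigma^2))\big]\delta^2 = \frac{1}{4\delta^2}\big[\delta^4 - (\rho^2-\sigma^2)^2\big] = \frac{\delta^2}{4} - \frac{(\rho^2-\sigma^2)^2}{4\delta^2}$. Subtracting,
\[
\xi^2 = \frac{\rho^2+\sigma^2}{2} - \frac{(\rho^2-\sigma^2)^2}{2\delta^2} - \frac{\delta^2}{4} + \frac{(\rho^2-\sigma^2)^2}{4\delta^2} = \frac{\rho^2}{2} + \frac{\sigma^2}{2} - \frac{\delta^2}{4} - \frac{(\rho^2-\sigma^2)^2}{4\delta^2},
\]
which is exactly the claimed expression. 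The containment $B(\z,\rho)\cap B(\y,\sigma)\subset B(\z^*,\xi)$ is then immediate from Lemma~\ref{lem:intersection_ball_ideal} applied with this $\lambda$ (noting $\z^* = \z'$).

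Since this argument is entirely a substitution plus bookkeeping, there is no real obstacle; the only points requiring care are (i) confirming $\lambda \in [0,1]$ from the hypothesis $|\rho^2-\sigma^2|\le\delta^2$ so that the previous lemma genuinely applies, and (ii) being careful that $\delta$ rather than $\norm{\z-\y}$ appears in the positivity condition, so one should either invoke the closed form directly or rely on $\rho+\sigma\ge\delta$ together with $\delta\le\norm{\z-\y}$. I would also remark, for the reader, that this $\lambda$ is the minimizer of $\lambda \mapsto (1-\lambda)\rho^2 + \lambda\sigma^2 - \lambda(1-\lambda)\delta^2$ over $\lambda\in\Real$ (a concave-down... in fact convex quadratic in $\lambda$, since the $-\lambda(1-\lambda)\delta^2 = (\lambda^2-\lambda)\delta^2$ term has positive leading coefficient), obtained by setting the derivative $-\rho^2 + \sigma^2 + (2\lambda-1)\delta^2$ to zero; this explains the origin of the formula and makes clear that $\xi$ is the smallest radius obtainable from the family of enclosing balls considered in Lemma~\ref{lem:intersection_ball_ideal}.
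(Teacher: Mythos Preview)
Your proposal is correct and follows exactly the paper's approach: verify $\lambda\in[0,1]$ from the hypothesis $|\rho^2-\sigma^2|\le\delta^2$, then substitute into the formula from Lemma~\ref{lem:intersection_ball_ideal}. The paper's own proof is the two-line version of what you wrote; your explicit algebra and the remark on the quadratic being minimized are welcome elaborations but not new ideas.
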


\begin{proof} Since  $|\rho^2-\sigma^2|/\delta^2 \le 1$, we have $\lambda \in [0,1]$. The
  rest follows from plugging $\lambda$ into the previous lemma.  
\end{proof}

We will be interested when the $\rho$, $\sigma$, and $\delta$ take a particular form as seen in Chen et al.\ \cite{GD_chen} and Bubeck et al.\ \cite{GD_Bubeck}. 

\begin{cor} \label{cor:intersection} Fix points $\z, \y \in \R^n$ and scalars $r_1, r_2 > 0$,
  $\varepsilon \in [0,1]$, and $C \ge 0$. Suppose $\norm{\y-\z} \ge r_2$
  and $\norm{\y-\z} \le \sqrt{r_1^2-\varepsilon r_2^2 - C} +
  \sqrt{(1-\varepsilon) r_2^2 - C}$. If we set 
\begin{equation} \label{eq:value_z}
\z^* = \begin{cases}
(1-\lambda)\z + \lambda \y, & r_1^2 \le 2r_2^2\\
\z, & r_1^2 > 2r_2^2
\end{cases} \quad \quad \text{where} \quad \quad \lambda = \frac{2r_2^2-r_1^2}{2r_2^2},
\end{equation}
then the point $\z^*$ satisfies the following
\[ B \left (\y, \sqrt{r_1^2-\varepsilon r_2^2-C} \right ) \cap B \left (\z,
  \sqrt{(1-\varepsilon) r_2^2-C} \right ) \subset B\left (\z^*,
  \sqrt{(1-\sqrt{\varepsilon})r_1^2 -C} \right ).\]
\end{cor}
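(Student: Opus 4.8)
The plan is to derive the corollary directly from Lemma~\ref{lem:intersection_ball_ideal}, the un-optimized ``intersection of two balls'' bound, by making a judicious choice of its free parameter $\delta$, and then to verify one scalar inequality using AM--GM.

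First I would match notation with Lemma~\ref{lem:intersection_ball_ideal}: take its two centers to be the present $\z$ and $\y$; take the radius paired with $\z$ to be $\rho := \sqrt{(1-\varepsilon)r_2^2 - C}$ and the radius paired with $\y$ to be $\sigma := \sqrt{r_1^2 - \varepsilon r_2^2 - C}$; and---the one real choice here---take $\delta := r_2$, which is admissible since the hypothesis gives $\delta = r_2 \le \norm{\y-\z}$. In the principal case $r_1^2 \le 2r_2^2$ the scalar $\lambda = (2r_2^2 - r_1^2)/(2r_2^2)$ lies in $[0,1]$ with $1-\lambda = r_1^2/(2r_2^2)$, and the point $(1-\lambda)\z + \lambda\y$ delivered by the lemma is exactly the $\z^*$ of \eqref{eq:value_z}. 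The lemma then gives $B(\y,\sigma)\cap B(\z,\rho) \subset B(\z^*,\xi)$ with $\xi^2 = (1-\lambda)\rho^2 + \lambda\sigma^2 - \lambda(1-\lambda)r_2^2$; and because the other hypothesis supplies $\rho + \sigma \ge \norm{\y-\z} \ge r_2 = \delta$, the positivity clause of the lemma gives $\xi^2 \ge 0$, which makes the square roots below legitimate.

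The heart of the argument is then a short, self-contained computation: after substituting the values of $\rho^2,\sigma^2,\lambda$ and using $1-\lambda = r_1^2/(2r_2^2)$, the $C$-dependent portions of the three terms cancel down to a single $-C$, while the rest collapses---after expanding $(2r_2^2-r_1^2)(r_1^2-\varepsilon r_2^2)$ and collecting like terms---to
\[ \xi^2 = r_1^2 - \varepsilon r_2^2 - \frac{r_1^4}{4r_2^2} - C. \]
Comparing this with the target $(1-\sqrt{\varepsilon})r_1^2 - C$, the required inclusion $B(\z^*,\xi)\subset B\big(\z^*,\sqrt{(1-\sqrt\varepsilon)r_1^2-C}\big)$ reduces to the scalar inequality $\sqrt{\varepsilon}\,r_1^2 \le \varepsilon r_2^2 + r_1^4/(4r_2^2)$, which is exactly AM--GM applied to the two nonnegative quantities $\varepsilon r_2^2$ and $r_1^4/(4r_2^2)$, whose product is $\varepsilon r_1^4/4$. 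That settles the case $r_1^2 \le 2r_2^2$.

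Finally I would dispose of the remaining case $r_1^2 > 2r_2^2$, in which $\z^* = \z$. Here it is enough to use the trivial containment $B(\y,\sigma)\cap B(\z,\rho) \subset B(\z,\rho)$ together with $\rho^2 = (1-\varepsilon)r_2^2 - C \le (1-\sqrt\varepsilon)r_1^2 - C$, the latter following from $(1-\varepsilon) = (1-\sqrt\varepsilon)(1+\sqrt\varepsilon) \le 2(1-\sqrt\varepsilon)$ and $2r_2^2 < r_1^2$. I do not anticipate a genuine obstacle in any of this; the only points requiring care are getting the center/radius correspondence with Lemma~\ref{lem:intersection_ball_ideal} the right way round (the ``$r_1$-type'' ball is centered at $\y$, the ``$r_2$-type'' ball at $\z$) and making the deliberate choice $\delta = r_2$ instead of $\delta = \norm{\y-\z}$, which is precisely what lets the algebra collapse into a recognizable AM--GM.
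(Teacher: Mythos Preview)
Your proof is correct and follows essentially the same route as the paper: set $\delta = r_2$, split on whether $r_1^2 \le 2r_2^2$, arrive at the closed form $\xi^2 = r_1^2 - \varepsilon r_2^2 - r_1^4/(4r_2^2) - C$, and finish with AM--GM; then handle $r_1^2 > 2r_2^2$ via the trivial containment and $(1-\varepsilon) \le 2(1-\sqrt\varepsilon)$. The only cosmetic difference is that the paper routes the first case through Lemma~\ref{lem:intersection_balls} (and so checks $|\rho^2-\sigma^2|\le\delta^2$), whereas you invoke Lemma~\ref{lem:intersection_ball_ideal} directly with the given $\lambda$ and need only verify $\lambda\in[0,1]$; this is a minor streamlining, not a different argument.
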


\begin{proof} Let $\tilde{\sigma}^2 = r_1^2-\varepsilon r_2^2$ and
  $\tilde{\rho}^2 = (1-\varepsilon)r_2^2$ with corresponding radii
  $\sigma^2 = \tilde{\sigma}^2-C$ and $\rho^2 = \tilde{\rho}^2 -C$,
  respectively. We consider two cases depending on whether
  $\tilde{\sigma}$ is larger than $\tilde{\rho}$. 

Suppose $\tilde{\sigma}^2 \le \tilde{\rho}^2 + r_2^2,$ or equivalently,
$r_1^2 \le 2r_2^2$. In this case, we want
to use the optimal choice of $\lambda$ from Lemma~\ref{lem:intersection_balls}
with the relationship $\delta^2= r_2^2$. To do so, we see $\tilde{\sigma}^2
- \tilde{\rho}^2 \le r_2^2 = \delta^2$ and $\tilde{\rho}^2 - \tilde{\sigma}^2
\le \tilde{\rho}^2 \le r_2^2$. Hence in both cases, we have
\[ |\sigma^2 - \rho^2| = |\tilde{\sigma}^2-\tilde{\rho}^2| \le r_2^2 =
  \delta^2. \]
Moreover, by assumption, $\delta = r_2 \le \norm{\y-\z} \le \sigma +
\rho$. We apply Lemma~\ref{lem:intersection_balls}. Under standard
simplifications, if we set $\z^* = (1-\lambda)\z + \lambda \y$ with
$\lambda = \tfrac{2r_2^2-r_1^2}{2r_2^2}$, then the iterate $\z^*$
satisfies $B(\y, \sigma) \cap B(\z,
\rho) \subset B(\z^*, \xi)$ where 
\begin{equation} \label{eq:value_xi}
\xi^2 = r_1^2 - \varepsilon r_2^2 - \frac{r_1^4}{4r_2^2} - C \le
(1-\sqrt{\varepsilon}) r_1^2 - C.
\end{equation}
The last inequality follows from $a^2 + b^2 \ge 2ab$ applied to $a
= \sqrt{\varepsilon} \cdot r_2$ and $b = \tfrac{r_1^2}{2r_2}$. 

If $\tilde{\sigma}^2 > \tilde{\rho}^2 + r_2^2$, then we can not directly
apply Lemma~\ref{lem:intersection_balls} because we do not necessarily
have a positive $\lambda$. However, we still have sufficient decrease
by setting $\lambda = 0$. The condition $\tilde{\sigma}^2 >
\tilde{\rho}^2 + r_2^2$ occurs if and only if
$r_1^2 > 2 r_2^2$. If we set $z^* = z$, we have $\rho^2 = (1-\varepsilon)r_2^2
- C \le \tfrac{(1-\varepsilon) r_1^2}{2} - C$. The
result follows if $\tfrac{1-\varepsilon}{2} \le
1-\sqrt{\varepsilon}$ or equivalently $0 \le 1/2 - \sqrt{\varepsilon}
+ \tfrac{\varepsilon}{2}$ 
for all $\varepsilon \in [0,1]$ holds. The term $1/2 -
\sqrt{\varepsilon} + \tfrac{\varepsilon}{2}$ is a perfect square,
namely it equals $\tfrac{1}{2}
(\sqrt{\varepsilon}-1)^2$ and thus is always nonnegative. 

\end{proof}

\section{Idealized algorithm for strongly convex functions} \label{sec:IA1}

In this section we present Algorithm~\ref{alg:IA}, the idealized algorithmic framework for minimizing composite functions in which $f$ is $\a$-strongly convex.  This framework is said to be ``idealized'' because it is not implementable in general; several of the steps require prior knowledge of the optimizer.  We call it an ``algorithmic framework'' rather than ``algorithm'' because some of its steps are underspecified.  Nevertheless, it serves as the basis for analyzing implementable algorithms.  After presenting the framework, we analyze its convergence rate using a potential function. Let $\x^*$ denote the minimizer of the composite function $F$ in \eqref{eq:composite_problem}.

	\begin{algorithm}[ht!]
          \textbf{Initialization:} Fix $\x_0 \in \R^n$. \\
\textbf{Set:} $\z_0 = \x_0$, and any affine subspace $\mathcal{M}_1 \supseteq \z_0 + \text{span} \{ G_{1/L}(\z_0) \}$\\
\textbf{for} $k = 1, 2, \ldots $
\begin{enumerate}
\item \textbf{Compute $\y_k$:} Let
\[ \y_k = \argmin_{\y} \{ \norm{\y-\x^*}^2 \, : \, \y \in \mathcal{M}_k\}. \]
\item \textbf{Compute $\x_k$:} Choose $\x_{k}$ such that
\begin{minipage}{0.85 \textwidth} \begin{equation} \label{eq:IA_x} \x_k = \argmin_{\x}
    \{ F(\x) \, : \, \x \in \mathcal{M}_k\}. \end{equation} \end{minipage}
\item \textbf{Compute $\z_k$:} Select an auxiliary $\z_k \in
  \mathcal{M}_k$ so that 
\begin{minipage}{0.85 \textwidth} \begin{equation} \label{eq:IA_z}  G_{1/\L}(\z_k)^T(\y_k-\z_k) \ge 0 \quad \text{and} \quad
F(\bar{\z}_k) \le F(\x_k) - \frac{1}{2\L} \norm{G_{1/\L}(\z_k)}^2,
\end{equation} \end{minipage}
\qquad where $\bar{\z}_k = \z_k - \frac{1}{\L}G_{1/\L}(\z_k)$.
\item \textbf{Update affine subspace:}
\[\mathcal{M}_{k+1} \supseteq \z_k + \text{span}\{\y_k-\z_k, G_{1/\L}(\z_k)\}\]
\end{enumerate}
\textbf{end}
		\caption{Idealized algorithmic framework for constraints (IA) }
		\label{alg:IA}
	\end{algorithm}

\begin{remark} \rm Steps 1 and 2 of Algorithm~\ref{alg:IA}, both unimplementable in general, are the same
as the corresponding steps in the idealized algorithm of \cite{KV}.  The new
ingredient in Algorithm 1 is Step 3 and the definition of the auxiliary sequence $\z_k$. Below we show there always exists an iterate $\z_k$ satisfying
    the conditions in Step 3 of Algorithm~\ref{alg:IA}. Both Nesterov's accelerated gradient method and
    geometric descent compute a point $\z_k$ and set $\x_k = \z_{k-1} -
    \tfrac{1}{\L} G_{1/\L}(\z_{k-1})$ whereas conjugate gradient directly
    computes $\x_k$ without knowledge of $\y_k$ and $\z_k$. 
    \end{remark}
    \begin{remark}
  \rm When the function $\Psi$ is an indicator
    of a closed convex set, neither iterate $\y_k$ nor $\z_k$
    is feasible in general.  
\end{remark}

We start by defining a potential at iteration $k \ge 1$ as follows
\[\Phi_k = \norm{\y_k-\x^*}^2 + \frac{ 2 (F(\x_k)-F(\x^*))}{\alpha}. \]
\begin{thm}[Convergence of IA] \label{thm:converge_IA} The
  iterates $\{\x_k, \z_k, \y_k\}_{k=1}^{\infty}$ generated by Algorithm~\ref{alg:IA} satisfy
  for $k = 1, 2, \ldots$
\[ \Phi_{k+1} \le \left (1-
    \sqrt{\frac{\a}{L}} \right ) \Phi_k \]
\end{thm}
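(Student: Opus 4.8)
The plan is to fix an index $k\ge 1$ (the estimate $\Phi_{k+1}\le(1-\sqrt{\alpha/\L})\Phi_k$ is proved for each $k$ separately, no induction is needed) and reduce the claim to two ``ball'' containments for the minimizer $\x^*$ combined with Corollary~\ref{cor:intersection}. Write $G:=G_{1/\L}(\z_k)$ and $\Delta_j:=F(\x_j)-F(\x^*)\ge 0$. Since $\y_j$ is the projection of $\x^*$ onto $\mathcal{M}_j$, we have $\norm{\y_j-\x^*}^2=\dist(\x^*,\mathcal{M}_j)^2$, so $\Phi_j=\dist(\x^*,\mathcal{M}_j)^2+2\Delta_j/\alpha$. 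If $G=\bz$ then by Lemma~\ref{lem:near_stationarity} $\z_k$ is stationary, hence $\z_k=\x^*\in\mathcal{M}_{k+1}$ and one checks directly that $\Phi_{k+1}=0$; so I will assume $G\neq\bz$ (which forces $\Phi_k>0$, as $\Phi_k=0$ together with \eqref{eq:IA_z} would force $G=\bz$).

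First I would record the function-value decrease: because $\bar{\z}_k=\z_k-G/\L\in\z_k+\Span\{G\}\subseteq\mathcal{M}_{k+1}$ and $\x_{k+1}$ minimizes $F$ over $\mathcal{M}_{k+1}$, the second inequality in \eqref{eq:IA_z} gives
\[ F(\x_{k+1})\le F(\bar{\z}_k)\le F(\x_k)-\tfrac{1}{2\L}\norm{G}^2,\qquad\text{i.e.}\qquad \Delta_{k+1}\le\Delta_k-\tfrac{1}{2\L}\norm{G}^2. \]
Next I would set the parameters $r_1^2=\Phi_k$, $r_2=\norm{G}/\alpha$, $\varepsilon=\alpha/\L$, $C=2\Delta_{k+1}/\alpha$ (all admissible: $r_1,r_2>0$, $\varepsilon\in[0,1]$ since $\alpha\le\L$, $C\ge 0$) and produce two balls. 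For the ball around $\bar{\bar{\z}}_k=\z_k-G/\alpha$: apply Lemma~\ref{lem:strongly_convex} at $\z_k$ with $\y=\x^*$ and complete the square, turning $\tfrac{\alpha}{2}\norm{\x^*-\z_k}^2+G^T(\x^*-\z_k)$ into $\tfrac{\alpha}{2}\norm{\x^*-\bar{\bar{\z}}_k}^2-\tfrac{1}{2\alpha}\norm{G}^2$; using $F(\bar{\z}_k)\ge F(\x_{k+1})$ this yields $\norm{\x^*-\bar{\bar{\z}}_k}^2\le(1-\varepsilon)r_2^2-C$. For the ball around $\y_k$: the decrease inequality gives $2\Delta_k/\alpha-2\Delta_{k+1}/\alpha\ge\norm{G}^2/(\alpha\L)=\varepsilon r_2^2$, hence $\norm{\x^*-\y_k}^2=\Phi_k-2\Delta_k/\alpha\le r_1^2-\varepsilon r_2^2-C$.

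Finally I would verify the hypotheses of Corollary~\ref{cor:intersection} with $\y\leftrightarrow\y_k$ and $\z\leftrightarrow\bar{\bar{\z}}_k$: the triangle inequality through $\x^*$ bounds $\norm{\y_k-\bar{\bar{\z}}_k}$ by the sum of the two radii, and — this is the only place the first inequality of \eqref{eq:IA_z} is used — expanding
\[ \norm{\y_k-\bar{\bar{\z}}_k}^2=\norm{\y_k-\z_k}^2+\tfrac{2}{\alpha}G^T(\y_k-\z_k)+\tfrac{1}{\alpha^2}\norm{G}^2\ge\tfrac{1}{\alpha^2}\norm{G}^2=r_2^2 \]
shows $\norm{\y_k-\bar{\bar{\z}}_k}\ge r_2$. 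The corollary then gives $\x^*\in B(\z^*,\sqrt{(1-\sqrt{\alpha/\L})\Phi_k-C})$ where $\z^*$ is the explicit convex combination of $\bar{\bar{\z}}_k$ and $\y_k$ (or $\z^*=\bar{\bar{\z}}_k$). Since $\y_k,\bar{\bar{\z}}_k\in\mathcal{M}_{k+1}$ and $\mathcal{M}_{k+1}$ is affine, $\z^*\in\mathcal{M}_{k+1}$, whence $\dist(\x^*,\mathcal{M}_{k+1})^2\le\norm{\x^*-\z^*}^2\le(1-\sqrt{\alpha/\L})\Phi_k-2\Delta_{k+1}/\alpha$, which rearranges to $\Phi_{k+1}\le(1-\sqrt{\alpha/\L})\Phi_k$.

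The main obstacle is pinning down the parameters $(r_1,r_2,\varepsilon,C)$ so the three numerology conditions of Corollary~\ref{cor:intersection} hold at once: matching the coefficient $\tfrac{1}{\alpha^2}-\tfrac{1}{\alpha\L}$ of $\norm{G}^2$ produced by the strong-convexity completion of squares against $(1-\varepsilon)r_2^2$ forces $r_2=\norm{G}/\alpha$ and $\varepsilon=\alpha/\L$, and then the function-value decrease is exactly what absorbs the residual $\varepsilon r_2^2$ in the $\y_k$-ball. Everything else (nonnegativity of the radicands, $\lambda\in[0,1]$, $\z^*\in\mathcal{M}_{k+1}$) is routine.
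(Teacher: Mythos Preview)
Your proposal is correct and follows essentially the same route as the paper: you build the same two balls around $\bar{\bar{\z}}_k$ and $\y_k$ (via Lemma~\ref{lem:strongly_convex} and the decrease inequality from \eqref{eq:IA_z}), identify the same parameters $r_1^2=\Phi_k$, $r_2=\norm{G}/\alpha$, $\varepsilon=\alpha/\L$, $C=2\Delta_{k+1}/\alpha$, verify the hypotheses of Corollary~\ref{cor:intersection} the same way (triangle inequality for the upper bound, the expansion using $G^T(\y_k-\z_k)\ge 0$ for the lower bound), and conclude via the optimality of $\y_{k+1}$ in $\mathcal{M}_{k+1}$. Your explicit treatment of the degenerate case $G=\bz$ is a small addition the paper omits, but otherwise the arguments match step for step.
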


\begin{proof} Define the points
\begin{align*}
\bar{\z}_k = \z_k - G_{1/\L}(\z_k) / \L \quad \text{and} \quad
  \bar{\bar{\z}}_k = \z_k - G_{1/L}(\z_k) / \a.
\end{align*}
The point $\bar{\z}_{k}$ satisfies both $F(\bar{\z}_k) \le F(\x_k) -
\norm{G_{1/\L}(\z_k)}^2/ (2\L)$ and $F(\x_{k+1}) \le
F(\bar{\z}_k)$ since $\bar{\z}_k \in \mathcal{M}_{k+1}$ for $k \ge 1$ by equations
\eqref{eq:IA_x} and \eqref{eq:IA_z}. This implies
\begin{equation} \begin{aligned} \label{eq:IA_decrease}
\frac{2(F(\x_{k+1})-F(\x^*))}{\a} +
\frac{\a}{\L} \cdot \frac{\norm{G_{1/\L}(\z_k)}^2}{\a^2}
& \le \frac{2(F(\bar{\z}_k)-F(\x^*))}{\a} +
\frac{\a}{L} \cdot \frac{\norm{G_{1/\L}(\z_k)}^2}{\a^2} \\
&\le \frac{2(F(\x_k)-F(\x^*))}{\a}.
\end{aligned} \end{equation}
Moreover, we observe from \eqref{eq:strong_cvx} with $\y = \x^*$ 
and $\x= \z_k$ that 
\begin{align} \label{eq:IA_bound_1}
\frac{-2 G_{1/\L}(\z_k)^T(\z_k-\x^*)}{\a} + \frac{1}{\a \L}
  \norm{G_{1/\L}(\z_k)}^2 + \norm{\z_k-\x^*}^2 \le \frac{-2( F(\bar{\z}_k)-F(\x^*))}{\a}.
\end{align}
We use this result to compute the following:
\begin{equation} \begin{aligned} \label{eq:rho}
\norm{\bar{\bar{\z}}_k - \x^*}^2 &= \norm{\bar{\bar{\z}}_k - \z_k}^2 +
  2(\bar{\bar{\z}}_k-\z_k)^T(\z_k-\x^*) + \norm{\z_k-\x^*}^2\\
&= \frac{\norm{G_{1/\L}(\z_k)}^2}{\a^2}
  -\frac{2G_{1/\L}(\z_k)^T(\z_k-\x^*)}{\a} + \norm{\z_k-\x^*}^2\\
&\le \left (1- \frac{\a }{\L } \right )
  \frac{\norm{G_{1/\L}(\z_k)}^2}{\alpha^2} - \frac{2(F(\bar{\z}_k)-F(\x^*))}{\a}\\
&\le \left (1 - \frac{\a}{\L } \right )
  \frac{\norm{G_{1/\L}(\z_k)}^2}{\a^2} - \frac{2(F(\x_{k+1})-F(\x^*))}{\a} =: \rho_k^2,
\end{aligned}
\end{equation}
where the first inequality follows from \eqref{eq:IA_bound_1} and the last inequality is a consequence of the assumption
$F(\x_{k+1}) \le F(\bar{\z}_k)$ as $\bar{\z}_k \in \mathcal{M}_{k+1}$. The radius $\rho_k$ will be used in
Corollary~\ref{cor:intersection}. Thus, we have $\x^*\in
B(\bar{\bar{\z}}_k, \rho_k)$. Next, we see from \eqref{eq:IA_decrease}
for $k \ge 1$
\begin{equation} \begin{aligned} \label{eq: IA_y_bound}
&\norm{\y_k-\x^*}^2 = \norm{\y_k-\x^*}^2 + \frac{ 2(F(\x_k)-F(\x^*))}{\a} -
  \frac{2 (F(\x_k)-F(\x^*))}{\a} \\
& \qquad \le \norm{\y_k-\x^*}^2 + \frac{ 2(F(\x_k)-F(\x^*))}{\a} - \frac{\a}{\L}
  \cdot \frac{\norm{G_{1/\L}(\z_k)}^2}{\a^2} - \frac{2
  (F(\x_{k+1})-F(\x^*))}{\a} =: \sigma_k^2.
\end{aligned} \end{equation}
The expressions for \eqref{eq:rho} and \eqref{eq: IA_y_bound} share
many quantities like those in Corollary~\ref{cor:intersection}. For Corollary~\ref{cor:intersection}, we need to identify $r_1$,
$r_2$, $C$, and $\varepsilon$ from $\rho_k$ and $\sigma_k$. As such,
we see that we have the following relationships for $k \ge 1$
\[\rho_k^2 = (1-\varepsilon) r_2^2 - C \quad \text{and} \quad \sigma_k^2
  = r_1^2 - \varepsilon r_2^2 - C\]
\[\text{where} \quad r_2^2 = \frac{\norm{G_{1/\L}(\z_k)}^2}{\a^2}, \quad r_1^2 =
  \Phi_k, \quad \varepsilon = \frac{\a}{\L}, \quad \text{and} \quad C = \frac{2(F(\x_{k+1})-F(\x^*))}{\a}.\]
We have now defined $r_1, r_2,C,$ and $\varepsilon$ for
Corollary~\ref{cor:intersection} with $\y = \y_k$ and $\z = \bar{\bar{\z}}_k$. First we confirm that
$\norm{\y_k-\bar{\bar{\z}}_k}^2 \ge r_2^2$. The choice of $\z_k$ ensures
that $G_{1/\L}(\z_k)^T(\y_k-\z_k) \ge 0$ for all $k \ge 1$. 
It follows from simple computations
\begin{equation} \begin{aligned} \label{eq:IA_1}
\norm{\y_k-\bar{\bar{\z}}_k}^2 &= \norm{\y_k - \z_k + \z_k -
  \bar{\bar{\z}}_k}^2\\
&= \norm{\y_k-\z_k + \tfrac{1}{\a} G_{1/\L}(\z_k)}^2\\
&= \norm{\y_k-\z_k}^2 + \tfrac{1}{\a^2} \norm{G_{1/\L}(\z_k)}^2 +
  \tfrac{2}{\a} G_{1/\L}(\z_k)^T(\y_k-\z_k)\\
&\ge \tfrac{1}{\a^2} \norm{G_{1/\L}(\z_k)}^2 = r_2^2.
\end{aligned} \end{equation}
Moreover, we have $\rho_k + \sigma_k \ge \norm{\bar{\bar{\z}}_k-\x^*} +
\norm{\y_k-\x^*} \ge \norm{\bar{\bar{\z}}_k-\y_k}$. Therefore by
Corollary~\ref{cor:intersection} there exists a $\z_k^* \in \text{aff}\{\y_k, \bar{\bar{\z}}_k\}$ such that $B(\y_k, \sigma_k)
\cap B(\bar{\bar{\z}}_k, \rho_k) \subset B\left (\z_k^*,
\left(1-\sqrt{\tfrac{\a}{\L}}\right) \Phi_k - \tfrac{2 (F(\x_{k+1})-F(\x^*))}{\a}
\right )$. Because $\x^* \in B(\y_k, \sigma_k)$ and $\x^* \in B(\bar{\bar{\z}}_k,
\rho_k)$, we have 
\[ \norm{\z_k^*-\x^*}^2 \le \left(1- \sqrt{\frac{\a}{\L}} \right) \Phi_k - \frac{2
    (F(\x_{k+1})-F(\x^*))}{\a}.\]
Since $\y_{k+1}$ is the optimizer of $\norm{\y-\x^*}$ over $\y \in
\mathcal{M}_{k+1}$ and $\z_k^* \in \mathcal{M}_{k+1}$, then $\y_{k+1}$
is at least as close to $\x^*$ as $\z_k^*$. Hence, we conclude
\[ \norm{\y_{k+1}-\x^*}^2 \le \norm{\z_k^*-\x^*}^2 \le \left(1- \sqrt{\frac{\a}{\L}} \right) \Phi_k - \frac{2
    (F(\x_{k+1})-F(\x^*))}{\a}. \]
The result follows by adding $\frac{2(F(\x_{k+1})-F(\x^*))}{\a}$ to both
sides. \end{proof}

In Step 3 of Algorithm~\ref{alg:IA}, we need to construct an iterate $\z_k
\in \mathcal{M}_k$ such that the following hold
\begin{equation} \label{eq: aux_iter}  G_{1/\L}(\z_k)^T(\y_k-\z_k) \ge 0
  \quad \text{and} \quad F(\bar{\z}_k) \le F(\x_k) - \frac{1}{2\L}
  \norm{G_{1/\L}(\z_k)}^2. \end{equation}
These two properties were a crucial part of Chen et al.\ \cite{GD_chen} proof
of geometric descent for convex composites. We restate their results
and provide a proof for completeness. 

\begin{lem}[Lemma 3.2 of \cite{GD_chen}] \label{lem:add_bound}
Assume $F=f+\Psi$ in which $f$ is $\L$-smooth and convex and $\Psi$ is closed, proper and convex.  Condition \eqref{eq: aux_iter} holds
  if $\z_k \in \mathcal{M}_k$ satisfies
\[G_{1/\L}(\z_k)^T(\y_k-\z_k) \ge 0 \quad \text{and} \quad
  -G_{1/\L}(\z_k)^T(\x_k-\z_k) \le 0.\]
\end{lem}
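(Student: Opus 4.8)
The plan is to show that the two hypotheses on $\z_k$ in Lemma~\ref{lem:add_bound} together imply both inequalities in \eqref{eq: aux_iter}. The first inequality, $G_{1/\L}(\z_k)^T(\y_k-\z_k)\ge 0$, is literally one of the hypotheses, so there is nothing to prove there. The work is entirely in establishing the second inequality, $F(\bar\z_k)\le F(\x_k)-\tfrac{1}{2\L}\norm{G_{1/\L}(\z_k)}^2$.

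For that, I would invoke Lemma~\ref{lem:smooth} with $\x=\z_k$ and $\y=\x_k$. That lemma (using convexity of $f$ and of $\Psi$) gives
\[
F(\bar\z_k)\le F(\x_k)+G_{1/\L}(\z_k)^T(\z_k-\x_k)-\frac{1}{2\L}\norm{G_{1/\L}(\z_k)}^2.
\]
Now the second hypothesis of the lemma, $-G_{1/\L}(\z_k)^T(\x_k-\z_k)\le 0$, is exactly the statement that $G_{1/\L}(\z_k)^T(\z_k-\x_k)\le 0$. Substituting this into the bound above immediately kills the cross term and yields $F(\bar\z_k)\le F(\x_k)-\tfrac{1}{2\L}\norm{G_{1/\L}(\z_k)}^2$, which is the second condition in \eqref{eq: aux_iter}. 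Combining with the (verbatim) first hypothesis completes the proof.

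I do not expect any genuine obstacle here; the lemma is essentially a bookkeeping corollary of Lemma~\ref{lem:smooth}. The only point requiring mild care is confirming that Lemma~\ref{lem:smooth} applies with the roles $\x=\z_k$, $\y=\x_k$ — one needs $\z_k\in\R^n$ (no feasibility or membership in $\mathcal{M}_k$ is needed for that lemma) and that $f$ is convex and $\L$-smooth and $\Psi$ closed proper convex, all of which are in force by hypothesis. It is also worth noting, as a remark rather than part of the proof, that in the Algorithm~\ref{alg:IA} context the second hypothesis $G_{1/\L}(\z_k)^T(\z_k-\x_k)\le 0$ is natural because $\x_k$ minimizes $F$ over $\mathcal{M}_k$ while $\z_k\in\mathcal{M}_k$, though verifying that a $\z_k$ with both properties actually exists is the subject of the subsequent discussion and not of this lemma.
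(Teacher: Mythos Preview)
Your proposal is correct and follows essentially the same approach as the paper: apply Lemma~\ref{lem:smooth} with $\x=\z_k$, $\y=\x_k$, then use the second hypothesis to drop the cross term. The paper's proof is terser but identical in substance.
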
 

\begin{proof} By Lemma~\ref{lem:smooth} with $\x = \z_k$ and $\y =
  \x_k$, we have 
\begin{align*}
F(\bar{\z}_k) \le F(\x_k) - G_{1/\L}(\z_k)^T(\x_k-\z_k)
  - \frac{1}{2\L} \norm{G_{1/\L}(\z_k)}^2.
\end{align*}
We apply the second assumption to complete the proof. 
\end{proof}
For notational simplicity, we drop the subscripts $k$, allowing $\x,\y$ to be any
two points in $\R^n$, and we seek a solution $\z$ to the following inequalities:
\begin{equation} \label{eq: condition_z_k} G_{1/\L}(\z)^T(\y-\z) \ge 0 \quad \text{and} \quad
  -G_{1/\L}(\z)^T(\x-\z) \le 0. \end{equation} As in Chen et al.\  \cite{GD_chen}, we define the following functions for any
given $\x, \y \in \R^n$ $(\x \neq \y)$, 
\begin{equation} \label{eq:increase_funct}
h_{\x,\y}(\z) = G_{1/\L}(\z)^T(\y-\x),  \, \, \forall \z \in \R^n \quad
\text{and} \quad \bar{h}_{\x,\y}(s) = h_{\x,\y}(\x + s(\y-\x)),
\, \, \forall s \in \R.
\end{equation}
The following lemma illustrates how to generate an iterate satisfying
the two conditions \eqref{eq: condition_z_k} in
Lemma~\ref{lem:add_bound} and hence the conditions \eqref{eq: aux_iter}.
\begin{lem}[Lemma 3.4 and 3.3 in \cite{GD_chen}] \label{lem: exist_z} Assume $F=f+\Psi$ where $f$ is $\L$-smooth (but not necessarily convex) and $\Psi$ is closed, proper and convex.  Let $\x, \y$ be distinct points in
  $\R^n$. There exists a point $\z \in \text{\rm aff}\{\x,\y\}$ of the following form satisfying the conditions in \eqref{eq:
    condition_z_k}:
\begin{equation} \label{eq:z_rep} \z  = \begin{cases}
\y, & \text{if $\bar{h}_{\x, \y}(1) \le 0$}\\
\x, & \text{if $\bar{h}_{\x,\y}(0) \ge 0$}\\
\mbox{$\x + s(\y-\x)$ for some $s\in[0,1]$}, &\text{otherwise.}
\end{cases} \end{equation}
\end{lem}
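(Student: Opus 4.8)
The plan is to prove Lemma~\ref{lem: exist_z} by a continuity/intermediate-value argument applied to the one-variable function $\bar h_{\x,\y}$ restricted to $[0,1]$. First I would establish that $\z \mapsto G_{1/\L}(\z)$ is a continuous map on $\R^n$, hence that $\bar h_{\x,\y}$ is continuous on $\R$; continuity of the prox-gradient follows from nonexpansiveness of the prox operator together with continuity of $\nabla f$ (this is where $\L$-smoothness, but not convexity of $f$, is used). Then I would observe that the two target inequalities in \eqref{eq: condition_z_k} can both be read off from $\bar h_{\x,\y}$ evaluated at the appropriate endpoint: if $\z = \x + s(\y-\x)$ lies on the segment, then $\y - \z = (1-s)(\y-\x)$ and $\x - \z = -s(\y-\x)$, so $G_{1/\L}(\z)^T(\y-\z) = (1-s)\,\bar h_{\x,\y}(s)$ and $-G_{1/\L}(\z)^T(\x-\z) = s\,\bar h_{\x,\y}(s)$. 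Hence for $s \in (0,1)$ \emph{both} conditions in \eqref{eq: condition_z_k} hold precisely when $\bar h_{\x,\y}(s) = 0$, and for the endpoints $s = 0$ and $s = 1$ the conditions degenerate to single sign requirements.

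Next I would handle the three cases of \eqref{eq:z_rep} in turn. If $\bar h_{\x,\y}(1) \le 0$: taking $\z = \y$ (i.e. $s = 1$), the first condition $G_{1/\L}(\y)^T(\y - \y) = 0 \ge 0$ is trivial, and the second reads $-G_{1/\L}(\y)^T(\x - \y) = \bar h_{\x,\y}(1) \le 0$, which is exactly the hypothesis. If $\bar h_{\x,\y}(0) \ge 0$: taking $\z = \x$ (i.e. $s = 0$), the second condition $-G_{1/\L}(\x)^T(\x - \x) = 0 \le 0$ is trivial, and the first reads $G_{1/\L}(\x)^T(\y - \x) = \bar h_{\x,\y}(0) \ge 0$, again the hypothesis. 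In the remaining case we have $\bar h_{\x,\y}(0) < 0 < \bar h_{\x,\y}(1)$ is false in general — rather we have $\bar h_{\x,\y}(1) > 0$ and $\bar h_{\x,\y}(0) < 0$ (the negations of the two preceding case conditions) — so by continuity and the intermediate value theorem there is $s \in (0,1)$ with $\bar h_{\x,\y}(s) = 0$, and by the identities above this $\z = \x + s(\y-\x)$ satisfies both inequalities in \eqref{eq: condition_z_k} with equality. In all three cases $\z \in \text{aff}\{\x,\y\}$ (indeed on the segment), and combining with Lemma~\ref{lem:add_bound} gives \eqref{eq: aux_iter}.

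The main obstacle, and the only point requiring care, is the continuity of $\z \mapsto G_{1/\L}(\z)$ when $f$ is merely $\L$-smooth and not convex: one must justify that $\prox_{\Psi/\L}$ is (globally Lipschitz, in fact $1$-Lipschitz) continuous as a consequence of $\Psi$ being closed proper convex, and that $\z \mapsto \z - \nabla f(\z)/\L$ is continuous since $\nabla f$ is continuous (automatic for $\L$-smooth $f$). Composing these and scaling by $\L$ yields continuity of $G_{1/\L}$. Everything else is the elementary endpoint bookkeeping and a single application of the intermediate value theorem; I do not anticipate any further difficulty, and in particular convexity of $f$ is nowhere needed, matching the statement of the lemma.
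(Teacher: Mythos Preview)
Your proposal is correct and follows essentially the same route as the paper's proof: establish continuity of $\bar h_{\x,\y}$ via nonexpansiveness of $\prox_{\Psi/\L}$ and continuity of $\nabla f$, check the two endpoint cases directly, and invoke the intermediate value theorem in the remaining case using the same identities $G_{1/\L}(\z)^T(\y-\z)=(1-s)\bar h_{\x,\y}(s)$ and $-G_{1/\L}(\z)^T(\x-\z)=s\bar h_{\x,\y}(s)$. The only difference is cosmetic: the paper proves the sharper quantitative statement that $h_{\x,\y}$ is $3\L\norm{\y-\x}$-Lipschitz, whereas you only argue continuity, which is all that is actually needed for the IVT step.
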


\begin{proof} First, we will show that the function $h_{\x,\y}(\cdot)$
  is $3\L$-Lipschitz continuous. For any closed, convex function $h :
  \R^n \to \R$,
  the proximal operator is nonexpansive (see \textit{e.g.} \cite{BC_book}) 
\[ \norm{\prox_h(\x) - \prox_h(\y)} \le \norm{\x-\y}, \quad \text{for all
    $\x, \y \in \R^n$}.\]
For any $\z_1, \z_2 \in \R^n$, we have the following
\begin{align*}
|h_{\x,\y}(\z_1) - h_{\x,\y}(\z_2)| &= |
  (G_{1/\L}(\z_1)-G_{1/\L}(\z_2))^T(\y-\x)|\\
&\le \L \norm{\y-\x} \biggl( \norm{\z_1-\z_2} \\
&\hphantom{\le} \quad\mbox{}+ \norm{\prox_{\Psi/\L}(\z_1- \nabla f(\z_1))/\L - \prox_{\Psi/L}(\z_2-\nabla f(\z_2)/\L)} \biggr )\\
&\le \L \norm{\y-\x} \left ( \norm{\z_1-\z_2} + \norm{\z_1 - \z_2} +
  \frac{1}{\L} \norm{\nabla f(\z_1)-\nabla f(\z_2)}  \right ).
\end{align*}
Because $\nabla f(\cdot)$ is $L$-Lipschitz continuous, we obtain the
desired result.  

Next, it is clear that the iterate $\z$ defined in \eqref{eq:z_rep}
lives in the affine space defined by $\x$ and $\y$. A quick computation shows
that when $\bar{h}_{\x,\y}(1) \le 0$ or $\bar{h}_{\x,\y}(0) \ge 0$ and
$\z = \y \, (\text{resp. } \x)$ then \eqref{eq: condition_z_k} holds. Suppose
$\bar{h}_{\x,\y}(1) > 0$ and $\bar{h}_{\x,\y}(0) < 0$. The intermediate
value theorem implies there exists an $s \in [0,1]$ such that
$G_{1/\L}(\x+s(\y-\x))^T(\y-\x) = \bar{h}_{\x,\y}(s) =0$. If $\z = \x+s(\y-\x)$ then
\[G_{1/\L}(\z)^T(\y-\z) = (1-s) G_{1/\L}(\x + s(\y-\x))^T(\y-\x) = 0\]
\[\text{and} \qquad -G_{1/\L}(\z)^T(\x-\z) = s G_{1/\L}(\x +
  s(\y-\x))^T(\y-\x) = 0.\]
The result follows.  
\end{proof}

\section{Geometric descent in the composite setting for strongly convex functions}\label{sec:chen}
In this section, we present the geometric descent algorithm for the
composite setting as established in Chen et al.\ \cite{GD_chen} and our potential-based analysis of it. The proof of the
idealized algorithm follows the ideas of geometric descent (GD) as
presented in both Bubeck et al.\  \cite{GD_Bubeck} and Chen et al.\  \cite{GD_chen}. Unlike
IA, the potential function for geometric descent must be
computable. The idea is to first set $\x_k = \bar{\z}_k$ and then upper bound
$\frac{2(F(\bar{\z}_k)-F(\x^*))}{\a} + \norm{\y_k-\x^*}^2$ by a computable
quantity. We present a modified version of Chen et al. geometric descent in Algorithm~\ref{alg:geometric_descent}.

	\begin{algorithm}[ht!]
          \textbf{Initialization:} Fix $\x_0 \in \R^n$\\
\textbf{Set:} Iterates $\z_0 :=\x_0$ and
          $\y_1 := \x_0 - \tfrac{1}{\a} G_{1/\L}(\x_0)$, and fix the radius
          $\tilde{\xi}_1^2 := \left ( \frac{1}{\a^2} - \frac{1}{\L\a}
          \right ) \norm{G_{1/\L}(\x_0)}^2$.\\
\textbf{for} $k = 1, 2, \ldots$
\begin{enumerate}
\item Set the iterates 
\[ \bar{\z}_{k-1} := \z_{k-1} - \frac{G_{1/\L}(\z_{k-1})}{\L} \quad
  \text{and} \quad \bar{\bar{\z}}_{k-1} = \z_{k-1}- \frac{G_{1/\L}(z_{k-1})}{\a}. \]
\item \textbf{Compute $\x_k$:} $\x_{k} := \bar{\z}_{k-1}$
\item \textbf{Compute $\z_k$:} Find a $\z_k \in \text{aff} \{
  \bar{\z}_{k-1}, \y_k\}$ such that 
\begin{minipage}{0.85 \textwidth}\begin{equation} \label{eq: gd_z} F(\bar{\z}_k) \le F(\bar{\z}_{k-1}) - \frac{1}{2\L}
  \norm{G_{1/\L}(\z_k)}^2 \quad \text{and} \quad
  G_{1/\L}(\z_k)^T(\y_k-\z_k) \ge 0. \end{equation} \end{minipage}
\item \textbf{Compute $\y_k$:} Determine $\lambda_{k+1}$ and
  $\tilde{\xi}_{k+1}$ as in \eqref{eq:GD_2} and \eqref{eq:GD_4} and set 
\begin{minipage}{0.85 \textwidth} \begin{equation} \label{eq:GD_y} \y_{k+1} := (1-\lambda_{k+1}) \bar{\bar{\z}}_{k} + \lambda_{k+1}
  \y_k. \end{equation} \end{minipage}
\end{enumerate}
\textbf{end}
		\caption{Geometric descent for the composite setting}
		\label{alg:geometric_descent}
	\end{algorithm}
	\bigskip

As in the analysis of IA, we have that \eqref{eq:rho} holds for all $k
\ge 1$ 
\begin{align} \label{eq:gd_2}
\norm{\bar{\bar{\z}}_k-\x^*}^2 \le \left ( 1 -
  \frac{\a}{L} \right ) \frac{\norm{G_{1/\L}(\z_k)}^2}{\a^2} - \frac{2
  (F(\bar{\z}_k)-F(\x^*))}{\a} =: \rho_k^2 =: \tilde{\rho}_k^2 - \gamma_k,
\end{align}
where $\tilde{\rho}_k^2 = \left ( \tfrac{1}{\a^2} -
  \tfrac{1}{\L\a} \right ) \norm{G_{1/\L}(\z_k)}^2$ and $\gamma_k =
\tfrac{2(F(\bar{\z}_k)-F(\x^*))}{\a}$. We now define a sequence of
positive scalars $\{\tilde{\xi}_k\}_{k =1}^\infty$ and their corresponding
$\{\sigma_k\}_{k=1}^\infty$ by the following
relation 
\begin{equation} \label{eq:GD_sigma_def} \sigma_k^2 := 
\tilde{\xi}_k^2 - \frac{\a}{\L} \frac{\norm{G_{1/\L}(\z_k)}^2}{\a^2}
-\gamma_k. \end{equation}
Note, for this discussion, the quantities $\sigma_k,\rho_k,\xi_k$ represent noncomputable radii (i.e. they require prior knowledge of $\x^*$), while  $\tilde\sigma_k,\tilde\rho_k,\tilde\xi_k$ represent related quantities that are computed by the algorithm.
The sequence $\{\sigma_k\}_{k=1}^\infty$ will upper bound the distance from $\y_k$ to the optimizer. For $k
= 1$, we have that 
\[\tilde{\xi}_1^2 = \left (1-\frac{\a}{\L} \right )
  \frac{\norm{G_{1/\L}(\z_0)}^2}{\a^2} \, \, \text{and} \, \,  \sigma_1^2 =  \left (1-\frac{\a}{\L} \right )
  \frac{\norm{G_{1/\L}(\z_0)}^2}{\a^2} - \frac{\a}{\L}
  \frac{\norm{G_{1/\L}(\z_1)}^2}{\a^2} - \frac{2(F(\bar{\z}_1)-
    F(\x^*))}{\a}.
\]
Inductively, we define $\tilde{\xi}_{k+1}$ for any $k \ge 1$ by 
\begin{align}
&\textbf{Set} \quad \tilde{\rho}_k^2 := \left ( 1 - \frac{\a}{\L }
  \right ) \frac{\norm{G_{1/\L}(\z_k)}^2}{\a^2}, \quad \tilde{\sigma}_k^2 =
  \tilde{\xi}_k^2 - \frac{\a}{\L} \frac{\norm{G_{1/\L}(\z_k)}^2}{\a^2},
  \quad \text{and} \quad
  \delta_k^2 = \frac{\norm{G_{1/\L}(\z_k)}^2}{\a^2} \label{eq:GD_bound_3}\\
&\textbf{if} \quad \tilde{\sigma}_k^2 \le \tilde{\rho}_k^2 + \delta_k^2 \label{eq:GD_if_1} \\
& \qquad \text{Compute} \quad \lambda_{k+1} = \frac{\delta_k^2 +
  \tilde{\rho}_k^2 - \tilde{\sigma}_k^2}{2 \delta_k^2}
  \quad \text{and} \quad \tilde{\xi}_{k+1}^2 =
  \frac{\tilde{\rho}_k^2}{2} + \frac{\tilde{\sigma}_k^2}{2}
  - \frac{\delta_k^2}{4} - \frac{(\tilde{\rho}_{k}^2-
  \tilde{\sigma}_{k}^2)^2}{4\delta_{k}^2} \label{eq:GD_2}\\
&\textbf{else} \nonumber \\
& \qquad \text{Set} \quad \lambda_{k+1} = 0 \quad \text{and} \quad
  \tilde{\xi}_{k+1}^2 = \tilde{\rho}_k^2.
  \label{eq:GD_4}
\end{align}
It remains to check that $\tilde{\xi}_k^2$ and $\tilde{\sigma}_k^2$ in both cases are positive.

\begin{thm} For $k = 1, 2, \ldots$, the following hold:
\begin{equation} \label{gd: bound_xi_sigma} \tilde{\xi}_k^2 \ge \norm{\y_k-\x^*}^2 +
  \frac{2(F(\bar{\z}_{k-1})-F(\x^*))}{\a} \quad \text{and} \quad \tilde{\sigma}_k^2 \ge \norm{\y_k-\x^*}^2 +
  \frac{2(F(\bar{\z}_k)-F(\x^*))}{\alpha}. \end{equation}
\end{thm}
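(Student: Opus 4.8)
The plan is to prove the two inequalities in \eqref{gd: bound_xi_sigma} simultaneously by induction on $k$, exploiting the fact that the algorithm's computed radii $\tilde\xi_k,\tilde\sigma_k,\tilde\rho_k$ are designed to track the noncomputable radii $\xi_k,\sigma_k,\rho_k$ from the IA analysis, but shifted by the common quantity $\gamma_k = 2(F(\bar\z_k)-F(\x^*))/\a$. Concretely, I expect the invariant to be $\tilde\xi_k^2 = \xi_{k-1}^{*2}$-type bounds, where the key bookkeeping identity is $\sigma_k^2 = \tilde\sigma_k^2 - \gamma_k$ and $\rho_k^2 = \tilde\rho_k^2 - \gamma_k$ (from \eqref{eq:gd_2} and \eqref{eq:GD_sigma_def}), so that subtracting $\gamma_k$ from a ``tilde'' radius recovers the true distance bound. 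The second inequality in \eqref{gd: bound_xi_sigma} will then follow from the first by a short separate argument, so the real content is the bound on $\tilde\xi_k^2$.

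First I would establish the base case $k=1$ directly: using the explicit formula $\tilde\xi_1^2 = (1-\a/\L)\norm{G_{1/\L}(\z_0)}^2/\a^2$ and the definition $\y_1 = \x_0 - G_{1/\L}(\x_0)/\a = \bar{\bar\z}_0$, apply \eqref{eq:rho} (equivalently \eqref{eq:gd_2}) with $k=0$ to get $\norm{\y_1-\x^*}^2 = \norm{\bar{\bar\z}_0 - \x^*}^2 \le \tilde\rho_0^2 - \gamma_0 = \tilde\xi_1^2 - 2(F(\bar\z_0)-F(\x^*))/\a$, which is exactly the first claim for $k=1$ since $\bar\z_0 = \x_1$ up to indexing (here $\x_1 = \bar\z_0$ by Step 2). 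For the inductive step, assume $\tilde\sigma_k^2 \ge \norm{\y_k-\x^*}^2 + \gamma_k$, i.e. $\x^* \in B(\y_k, \sigma_k)$ with $\sigma_k^2 = \tilde\sigma_k^2 - \gamma_k$. Combine this with $\x^* \in B(\bar{\bar\z}_k, \rho_k)$ from \eqref{eq:gd_2}, and verify the hypotheses of Lemma~\ref{lem:intersection_balls}/Corollary-style two-ball intersection: the condition $|\tilde\rho_k^2 - \tilde\sigma_k^2| \le \delta_k^2$ on the ``if'' branch, the sum-of-radii condition $\rho_k + \sigma_k \ge \norm{\bar{\bar\z}_k - \y_k}$ (which follows from the triangle inequality through $\x^*$, just as in the IA proof), and the lower bound $\norm{\bar{\bar\z}_k - \y_k} \ge \delta_k$ using $G_{1/\L}(\z_k)^T(\y_k - \z_k)\ge 0$ exactly as in \eqref{eq:IA_1}. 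Then $\y_{k+1} = (1-\lambda_{k+1})\bar{\bar\z}_k + \lambda_{k+1}\y_k$ with the choice of $\lambda_{k+1}$ in \eqref{eq:GD_2} is the center $\z^*$ of the enclosing ball, whose squared radius is $\tilde\xi_{k+1}^2 - \gamma_k$; since $F(\bar\z_{k})\le F(\bar\z_{k-1})$ is not what we need here—rather we use $F(\bar\z_k) \le F(\bar\z_{k-1}) - \norm{G_{1/\L}(\z_k)}^2/(2\L)$ from \eqref{eq: gd_z} together with $\x_{k+1} = \bar\z_k$—we get $\norm{\y_{k+1}-\x^*}^2 \le \tilde\xi_{k+1}^2 - \gamma_k$, and then re-expressing $\gamma_k$ in terms of $2(F(\x_{k+1})-F(\x^*))/\a = 2(F(\bar\z_k)-F(\x^*))/\a$ gives the first inequality at $k+1$. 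The ``else'' branch ($\lambda_{k+1}=0$, $\tilde\xi_{k+1}^2 = \tilde\rho_k^2$) is handled by just taking $\y_{k+1}=\bar{\bar\z}_k$ and invoking \eqref{eq:gd_2} directly. Finally, the second inequality $\tilde\sigma_k^2 \ge \norm{\y_k-\x^*}^2 + 2(F(\bar\z_k)-F(\x^*))/\a$ follows from the first inequality at index $k$ by subtracting off the gradient term: $\tilde\sigma_k^2 = \tilde\xi_k^2 - \frac{\a}{\L}\frac{\norm{G_{1/\L}(\z_k)}^2}{\a^2} - \gamma_k + \gamma_k$, combined with \eqref{eq: gd_z} which gives $F(\bar\z_k) \le F(\bar\z_{k-1}) - \frac{1}{2\L}\norm{G_{1/\L}(\z_k)}^2$, so that the loss in going from $\bar\z_{k-1}$ to $\bar\z_k$ is at least the subtracted gradient term.

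I would also need to discharge the deferred obligation that $\tilde\xi_k^2$ and $\tilde\sigma_k^2$ are positive in both branches, since the two-ball lemma requires genuine (positive-radius) balls. On the ``if'' branch, positivity of $\tilde\xi_{k+1}^2$ is the statement that $\rho_k + \sigma_k \ge \delta_k$ after the shift, and this follows from $\norm{\bar{\bar\z}_k - \y_k} \ge \delta_k$ plus the triangle inequality once we know both balls are nonempty; positivity of $\tilde\sigma_k^2$ needs $\tilde\xi_k^2 \ge \frac{\a}{\L}\frac{\norm{G_{1/\L}(\z_k)}^2}{\a^2} + \gamma_k$, which is precisely the first inequality of \eqref{gd: bound_xi_sigma} combined with the nonnegativity of $\norm{\y_k-\x^*}^2$ and the fact that $F(\bar\z_{k-1}) - F(\x^*) \ge \frac{1}{2\L}\norm{G_{1/\L}(\z_k)}^2$-type slack coming from Lemma~\ref{lem:smooth} applied at $\z_k$. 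This interdependence—positivity of $\tilde\sigma_k$ relying on the $\tilde\xi_k$ bound, which in turn feeds the induction—means the cleanest organization is a single induction whose hypothesis bundles both inequalities of \eqref{gd: bound_xi_sigma} together with the positivity of the radii at stage $k$.

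The main obstacle I anticipate is the careful alignment of the shifts by $\gamma_k$: the computed radii and the true radii differ by $\gamma_k$, and $\gamma_k$ changes from iteration to iteration, so one must be scrupulous about when a given $\norm{\cdot}^2 \ge (\text{radius})^2 + \gamma$ inequality is being used as a hypothesis versus a conclusion, and about matching $\gamma_k = 2(F(\bar\z_k)-F(\x^*))/\a$ against $2(F(\x_{k+1})-F(\x^*))/\a$ via $\x_{k+1}=\bar\z_k$. A secondary technical point is verifying the hypothesis $|\tilde\rho_k^2 - \tilde\sigma_k^2| \le \delta_k^2$ of the two-ball lemma: one direction is the ``if'' condition $\tilde\sigma_k^2 \le \tilde\rho_k^2 + \delta_k^2$, and the other, $\tilde\rho_k^2 - \tilde\sigma_k^2 \le \delta_k^2$, follows because $\tilde\rho_k^2 \le \delta_k^2$ outright (as $1 - \a/\L \le 1$), exactly paralleling the estimate in the proof of Corollary~\ref{cor:intersection}; this should go through without difficulty once the correspondence $r_2^2 \leftrightarrow \delta_k^2$, $\varepsilon \leftrightarrow \a/\L$ is made explicit.
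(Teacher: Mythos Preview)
Your proposal is correct and follows essentially the same approach as the paper: induction on $k$, with the base case handled via $\y_1=\bar{\bar\z}_0$ and \eqref{eq:gd_2}, the second inequality reduced to the first using \eqref{eq: gd_z} together with the definition of $\tilde\sigma_k^2$ in \eqref{eq:GD_bound_3}, and the inductive step carried out by identifying $r_1^2=\tilde\xi_k^2$, $r_2^2=\norm{G_{1/\L}(\z_k)}^2/\a^2$, $\varepsilon=\a/\L$, $C=\gamma_k$ and applying the two-ball intersection machinery (the paper phrases this as an appeal to Corollary~\ref{cor:intersection}, checking the same two hypotheses you list --- the triangle inequality through $\x^*$ and the lower bound on $\norm{\y_k-\bar{\bar\z}_k}$ from \eqref{eq:IA_1}). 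Your treatment of the ``else'' branch and the bookkeeping $\xi^2=\tilde\xi_{k+1}^2-\gamma_k$ also match the paper; the only addition in your plan is a more explicit discussion of the positivity of $\tilde\xi_k^2,\tilde\sigma_k^2$, which the paper announces but does not spell out.
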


\begin{proof} First, the inequalities in \eqref{gd: bound_xi_sigma} can be rewritten as
  $\tilde{\xi}_k^2 \ge \norm{\y_k-\x^*}^2 + \gamma_{k-1}$ and
  $\tilde{\sigma}_k^2 \ge \norm{\y_k-\x^*}^2 + \gamma_k$. Moreover by
  \eqref{eq: gd_z}, we have 
\[ \frac{2(F(\bar{\z}_k)-F(\x^*))}{\a} + \frac{\a}{\L}
  \frac{\norm{G_{1/\L}(\z_k)}^2}{\a^2} \le
  \frac{2(F(\bar{\z}_{k-1})-F(\x^*))}{\a}.\]
Therefore if the first inequality of \eqref{gd: bound_xi_sigma} holds, then the second also holds due to \eqref{eq:GD_bound_3}. We prove this result for $\tilde{\xi}_k$ inductively. For $k = 1$, we have from \eqref{eq:gd_2} and the initialization in Algorithm~\ref{alg:geometric_descent},
\begin{equation} \label{eq: gd_1} \norm{\y_1-\x^*}^2 =  \norm{\bar{\bar{\z}}_0 -\x^*}^2 \le \left ( 1- \frac{\a}{\L}
  \right ) \frac{\norm{G_{1/\L}(\z_0)}^2}{\a^2} - \frac{2(F(\bar{\z}_0) -
  F(\x^*))}{\a} = \tilde{\xi}_1^2 - \gamma_0.\end{equation}
The result follows for $k = 1$. 


Assume $k \ge 1$ and the induction hypothesis
$\tilde{\sigma}_k^2 \ge \norm{\y_k-\x^*}^2 +
\gamma_k$ and $\tilde{\xi}_k^2 \ge \norm{\y_k-\x^*}^2 +
\gamma_{k-1}$ holds. The idea is that $\sigma_k$ and $\rho_k$ take the specific form given in
Corollary~\ref{cor:intersection}. Recall the definitions of $\rho_k$
and $\sigma_k$ given in \eqref{eq:gd_2} and \eqref{eq:GD_sigma_def} resp., namely for $k \ge 1$, the radii satisfy
\[ \sigma_k^2 = \tilde{\xi}_k^2 - \frac{\a}{\L}
  \frac{\norm{G_{1/\L}(\z_k)}^2}{\a^2} - \gamma_k \quad \text{and}
  \quad \rho_k^2 = \left ( 1- \frac{\a}{\L} \right )
  \frac{\norm{G_{1/\L}(\z_k)}^2}{\a^2} - \gamma_{k}. \]
Hence, we can write $\sigma_k$ and $\rho_k$ as
\begin{equation} \label{eq:GD_9} \sigma_k^2 = r_1^2 - \varepsilon r_2^2 - C \quad \text{and} \quad
  \rho_k^2 = (1-\varepsilon) r_2^2 - C, \end{equation}
where $r_1^2 = \tilde{\xi}_k^2$, $r_2^2 =
\frac{\norm{G_{1/\L}(\z_k)}^2}{\a^2}$, $\varepsilon = \frac{\a}{\L}$, $\z
= \bar{\bar{\z}}_k$, $\y = \y_k$, 
and $C = \gamma_k$. We have identified the terms in
Corollary~\ref{cor:intersection}; it remains to show that we can apply
this result. From the induction hypothesis and the definition of
$\tilde{\rho}_{k}$ in \eqref{eq:gd_2}, we observe the following 
\begin{equation} \label{eq:gd_4} 
  \norm{\y_{k}-\bar{\bar{\z}}_{k}}^2 \le \norm{\y_{k}-\x^*} + \norm{\bar{\bar{\z}}_{k}-\x^*} \le
\sqrt{r_1^2 - \varepsilon r_2^2 - C} +
\sqrt{(1-\varepsilon) r_2^2 - C}. \end{equation} 
As in \eqref{eq:IA_1} of the IA analysis, we also have by \eqref{eq: gd_z},
\begin{equation} \label{eq:gd_5} r_2^2 = \frac{\norm{G_{1/\L}(\z_k)}^2}{\a^2}
  \le \norm{\y_k-\bar{\bar{\z}}_k}^2. 
\end{equation}  
We show $\y_{k+1}$ in GD is defined exactly as $\z^*$ in
Corollary~\ref{cor:intersection}. Suppose \eqref{eq:GD_if_1} holds,
which under the identification of \eqref{eq:GD_9}, agrees with $r_1^2
\le 2 r_2^2$. A quick calculation shows 
\[\lambda_{k+1} = \frac{\delta_k^2+\tilde{\rho}_k^2 -
    \tilde{\sigma}_k^2}{2 \delta_k^2} = \frac{2r_2^2-r_1^2}{2r_2^2}; \] 
thus $\lambda_{k+1}$ in \eqref{eq:GD_2} equals $\lambda$ in
\eqref{eq:value_z}. In particular from \eqref{eq:value_xi} in
Corollary~\ref{cor:intersection}, the iterate $\y_{k+1}$ satisfies
$B(\y_k, \sigma_k) \cap B(\bar{\bar{\z}}_k, \rho_k) \subset B(\y_{k+1}, \xi)$
where $\xi$ is defined in \eqref{eq:value_xi} by
\[ \xi^2 = r_1^2 -\varepsilon r_2^2 - \frac{r_1^4}{4r_2^2} - C. \]
A quick computation shows 
\begin{equation} \label{eq:GD_blah} \xi^2 = r_1^2-\varepsilon r_2^2 -
  \frac{r_1^4}{4r_2^2} - C = \frac{\tilde{\rho}_k}{2} +
  \frac{\tilde{\sigma}_k^2}{2} - \frac{\delta_k^2}{4} - \frac{
    (\tilde{\rho}_k^2 - \tilde{\sigma}_k)^2}{\delta_k^2} - \gamma_k = \tilde{\xi}_{k+1}^2 - \gamma_k, \end{equation}
where the second equality comes from the identifications in
\eqref{eq:GD_9} and the last equality from
\eqref{eq:GD_2}. 
Since the point $\x^*$ lies in $B(\y_k, \sigma_k)$ and
$B(\bar{\bar{\z}}_k, \rho_k)$, we obtain that
\[ \norm{\y_{k+1} - \x^*}^2 \le \xi^2 = \tilde{\xi}_{k+1}^2 - \gamma_k. \]

If the condition
\eqref{eq:GD_if_1} fails, then $\lambda_{k+1} = 0$ and $\y_{k+1} =
\bar{\bar{\z}}_{k}$. By definition of $\rho_{k}$, we have
$\norm{\y_{k+1}-\x^*}^2 \le \tilde{\rho}_{k}^2 - \gamma_{k}$, which by
\eqref{eq:GD_4} implies $\norm{\y_{k+1}-\x^*}^2 \le \tilde{\xi}_{k+1}^2 -
\gamma_k$.
\end{proof}

\begin{thm}[Convergence of geometric descent] For $k = 1, 2, \ldots$,
  we have
\[\tilde{\xi}_{k+1}^2 \le \left ( 1- \sqrt{\frac{\a}{\L}} \right ) \tilde{\xi}_{k}^2.\]
\end{thm}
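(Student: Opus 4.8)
The plan is to proceed by induction on $k$, exactly mirroring the structure of the recursion that defines $\tilde\xi_{k+1}$. The claim to prove, $\tilde\xi_{k+1}^2 \le (1-\sqrt{\a/\L})\tilde\xi_k^2$, should follow essentially from Corollary~\ref{cor:intersection} once we have correctly identified the parameters $r_1, r_2, C, \varepsilon$ in terms of the algorithm's quantities. First I would record the identifications already established in the preceding theorem's proof: $r_1^2 = \tilde\xi_k^2$, $r_2^2 = \norm{G_{1/\L}(\z_k)}^2/\a^2 = \delta_k^2$, $\varepsilon = \a/\L$, and $C = \gamma_k \ge 0$ (nonnegativity of $\gamma_k$ following from $F(\bar\z_k)\ge F(\x^*)$, which holds since $\x^*$ minimizes $F$). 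Note that $\gamma_k \ge 0$ will be needed to pass from the ``$-C$'' bound to the clean geometric rate.

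Next I would split into the two cases of the recursion. In the case \eqref{eq:GD_if_1} holds (equivalently $r_1^2 \le 2r_2^2$), equation \eqref{eq:GD_blah} from the previous theorem already gives $\tilde\xi_{k+1}^2 - \gamma_k = \xi^2 = r_1^2 - \varepsilon r_2^2 - r_1^4/(4r_2^2) - C$, and the last inequality of \eqref{eq:value_xi} in Corollary~\ref{cor:intersection} states precisely that $r_1^2 - \varepsilon r_2^2 - r_1^4/(4r_2^2) - C \le (1-\sqrt\varepsilon)r_1^2 - C$, i.e. $\xi^2 \le (1-\sqrt{\a/\L})\tilde\xi_k^2 - \gamma_k$. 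Adding $\gamma_k$ to both sides and using $\gamma_k \ge 0$ gives $\tilde\xi_{k+1}^2 \le (1-\sqrt{\a/\L})\tilde\xi_k^2$. In the case \eqref{eq:GD_if_1} fails ($r_1^2 > 2r_2^2$), the recursion sets $\tilde\xi_{k+1}^2 = \tilde\rho_k^2 = (1-\a/\L)\delta_k^2\cdot(\a/\L)^{-1}\cdot$... more carefully $\tilde\rho_k^2 = (1-\varepsilon)r_2^2$; here I would use $r_2^2 < r_1^2/2$ to get $\tilde\rho_k^2 = (1-\varepsilon)r_2^2 < \frac{1-\varepsilon}{2}r_1^2 \le (1-\sqrt\varepsilon)r_1^2$, the last step being the perfect-square bound $\frac{1-\varepsilon}{2}\le 1-\sqrt\varepsilon$ already recorded in the proof of Corollary~\ref{cor:intersection}. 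This gives $\tilde\xi_{k+1}^2 \le (1-\sqrt{\a/\L})\tilde\xi_k^2$ in the second case as well.

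I also need to confirm the hypotheses of Corollary~\ref{cor:intersection} are met so that \eqref{eq:value_xi} applies in the first case: namely $\norm{\y_k - \bar{\bar\z}_k} \ge r_2$ and $\norm{\y_k-\bar{\bar\z}_k} \le \sqrt{r_1^2 - \varepsilon r_2^2 - C} + \sqrt{(1-\varepsilon)r_2^2 - C}$. Both of these were verified in the previous theorem as \eqref{eq:gd_5} and \eqref{eq:gd_4}, the latter using the induction hypothesis $\tilde\sigma_k^2 \ge \norm{\y_k-\x^*}^2 + \gamma_k$ together with $\x^*\in B(\bar{\bar\z}_k,\rho_k)$ from \eqref{eq:gd_2}. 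So I would simply cite those.

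The main obstacle is not any single hard estimate — all the analytic content is already packaged in Corollary~\ref{cor:intersection} — but rather making sure the bookkeeping is airtight: that $\gamma_k \ge 0$ is invoked where it is needed (this is where convexity of the composite problem, via $F(\bar\z_k)\ge F(\x^*)$, enters), that the case split in the theorem statement exactly matches the case split \eqref{eq:GD_if_1}/\eqref{eq:GD_4} of the recursion, and that the ``$-C$'' terms cancel consistently. A secondary subtlety worth a sentence is the base-case/first-iteration handling and the outstanding remark in the text that $\tilde\xi_k^2$ and $\tilde\sigma_k^2$ are positive in both cases — positivity of $\tilde\xi_{k+1}^2$ follows from the chain $\tilde\xi_{k+1}^2 \ge \norm{\y_{k+1}-\x^*}^2 + \gamma_k \ge 0$ established in the previous theorem, and strict positivity (needed so the radii are well-defined) follows from the ``$\rho+\sigma\ge\norm{\z-\y}$'' clause of Lemma~\ref{lem:intersection_ball_ideal}, which I would note holds here by \eqref{eq:gd_4}.
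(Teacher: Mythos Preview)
Your proposal is correct and follows essentially the same argument as the paper: the same identifications $r_1^2=\tilde\xi_k^2$, $r_2^2=\delta_k^2$, $\varepsilon=\a/\L$, the same two-case split on \eqref{eq:GD_if_1}, and the same appeals to \eqref{eq:GD_blah}/\eqref{eq:value_xi} in the first case and to $r_2^2<r_1^2/2$ plus the perfect-square bound $\tfrac{1-\varepsilon}{2}\le 1-\sqrt\varepsilon$ in the second. One harmless redundancy: in case~1 the $\gamma_k$'s cancel exactly on both sides, so you do not actually need $\gamma_k\ge 0$ there (and in case~2 the recursion gives $\tilde\xi_{k+1}^2=\tilde\rho_k^2$ with no $-C$ term at all); similarly, the inequality in \eqref{eq:value_xi} is purely algebraic (AM--GM), so re-verifying the geometric hypotheses of Corollary~\ref{cor:intersection} is not required for this theorem.
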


\begin{proof} As in \eqref{eq:GD_9}, the following identifications of
  $\rho_k$ and $\sigma_k$ hold for all $k \ge 1$
\begin{align*}
\sigma_k^2 = r_1^2 -
  \varepsilon r_2^2 - C \quad \text{and} \quad  \rho_k^2 = (1-\varepsilon) r_2^2
  -C,
\end{align*} 
where $r_1^2 = \tilde{\xi}_k^2$, $r_2^2 = \frac{\a}{\L}
\frac{\norm{G_{1/\L}(\z_k)}^2}{\a^2}$, $\varepsilon =\frac{\a}{\L}$, and
$C = \gamma_k$.
We again take two cases depending on whether the
  condition \eqref{eq:GD_if_1} holds. If condition \eqref{eq:GD_if_1}
  holds, then we have by \eqref{eq:GD_blah}
\[ \tilde{\xi}_{k+1}^2 - \gamma_k = r_1^2-\varepsilon r_2^2 - \frac{r_1^4}{4r_2^2} - C.\]
In \eqref{eq:value_xi} of Corollary~\ref{cor:intersection}, we showed 
\[ r_1^2-\varepsilon r_2^2 - \frac{r_1^4}{4r_2^2} - C \le
  (1-\sqrt{\varepsilon}) r_1^2-C = \left (1 -\sqrt{\frac{\a}{\L}} \right
    )\tilde{\xi}_k^2-\gamma_k, \]
so the result follows. 

If condition \eqref{eq:GD_if_1} does not hold, namely $r_1^2 \ge
2r_2^2$, then one has
\[\tilde{\xi}_{k+1}^2 = (1-\varepsilon) r_2^2 \le
\frac{(1-\varepsilon)}{2} r_1^2 = \frac{(1-\varepsilon)}{2}
\tilde{\xi}_k^2.\]
A simple computation shows that $1/2 - \sqrt{\varepsilon} +
\tfrac{\varepsilon}{2} = \tfrac{1}{2} (\sqrt{\varepsilon}-1)^2 \ge
0$. This implies that $\tfrac{1}{2} (1-\varepsilon) \le
(1-\sqrt{\varepsilon})$. Hence, the result follows.
\end{proof}

\section{Accelerated Gradient}\label{nest:analysis}
In this section, we show that Nesterov's first accelerated gradient
method for composite functions \cite{intro_lect} can also be viewed as an approximation to the idealized algorithmic framework and can be analyzed with the same potential.  For this section, we assume $f$ is strongly convex, and let $\kappa=\L/\a$.  This is sometimes called the ``condition number'' of $f$.  Nesterov's algorithm is presented below as Algorithm~\ref{alg:AG}.

	\begin{algorithm}[ht!]
	\label{alg:AG}
          \textbf{Initialization:} Choose $\x_0 \in \R^n$\\
\textbf{Set:} $\w_0 = \x_0$\\
\textbf{for }$k = 1, 2, \ldots $
\begin{enumerate}
\item \textbf{Compute $\x_k$:} 
\begin{minipage}{0.85 \textwidth} \begin{equation} \label{eq:AG_x} \x_k
    := \prox_{\Psi/\L}\left ( \w_{k-1}- \frac{1}{\L} \nabla f(\w_{k-1}) \right ) \end{equation} \end{minipage}
\item \textbf{Compute $\w_k$:} 
\begin{minipage}{0.85 \textwidth} \begin{equation} \label{eq:AG_w} \w_k := \x_k + \theta (\x_k-\x_{k-1}),
  \quad \text{where} \quad \theta =
  \tfrac{\sqrt{\kappa}-1}{\sqrt{\kappa} + 1}. \end{equation} \end{minipage}
\end{enumerate}
		\caption{Accelerated Gradient (AG) }
	\end{algorithm}
	\bigskip

For the purpose of the convergence analysis, define the following auxiliary sequences
of vectors and scalars:
\begin{align}
\bar{\bar{\w}}_k &= \w_k - \tfrac{G_{1/\L}(\w_k)}{\a}, \label{eq:AG_wbarbar} \\
\y_0 &= \x_0, \label{eq:AG_y_initial}\\
\y_k &= \x_k + \tau(\x_k-\x_{k-1}) \label{eq:AG_y}\\
\tilde{\sigma}_0^2 &= \norm{\y_0-\x^*}^2 + \frac{2(F(\x_0)-F(\x^*))}{\a}\label{eq:ag_sigma0}\\
\tilde{\sigma}_{k+1}^2 & = (1-\kappa^{-1/2}) \tilde{\sigma}_k^2 -
                     (\kappa^{1/2} - \kappa^{-1/2}) \norm{\w_k-\x_k}^2 \label{eq:ag_sigmak1}\\
\tau &= \sqrt{\kappa} -1
\end{align}
Unlike the case of Geometric Descent, these auxiliary scalars and vectors play no role in the computation of the iterate.  However, with the exception of the initialization of $\tilde\sigma_0$, all are computable and could be used in an adaptive or hybrid version of AG as in \cite{KV}.  Furthermore, the equation \eqref{eq:ag_sigma0} defining $\tilde\sigma_0$ can be replaced by a computable upper bound using the result of \eqref{gd: bound_xi_sigma}.

It is clear from \eqref{eq:ag_sigmak1} that for $k = 0, 1, 2,\ldots$ we have
\[ \tilde{\sigma}_{k+1}^2 \le \left(1-\frac{1}{\sqrt{\kappa}} \right)
  \tilde{\sigma}_k^2. \]
\begin{lem} \label{lem:nesterov_affine} Fix $\x_0 \in \R^n$. Suppose $\{\x_k, \w_k\}_{k=0,1,\ldots}$ are
  iterates generated by Algorithm~\ref{alg:AG}. Then $\y_{k+1}$ defined in
  \eqref{eq:AG_y} with $\y_0 = \x_0$ is a convex combination of
  $\bar{\bar{\w}}_k$ and $\y_k$. In particular for all $k = 0, 1,
  \ldots$, we have
\begin{equation} \label{eq:nesterov_y} \y_{k+1} = \frac{1}{\sqrt{\kappa}} \bar{\bar{\w}}_k +
  \left (1-\frac{1}{\sqrt{\kappa}} \right ) \y_k. \end{equation}
\end{lem}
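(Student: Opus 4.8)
The plan is to reduce the claimed identity \eqref{eq:nesterov_y} to an elementary identity among the three consecutive iterates $\x_{k-1},\x_k,\x_{k+1}$ and then match coefficients. The key observation is that the auxiliary point $\bar{\bar{\w}}_k$ is really just an affine combination of $\w_k$ and $\x_{k+1}$: by \eqref{eq:AG_x} and the definition of the prox-gradient mapping, $\x_{k+1} = \prox_{\Psi/\L}(\w_k - \nabla f(\w_k)/\L) = \w_k - G_{1/\L}(\w_k)/\L$, so $G_{1/\L}(\w_k) = \L(\w_k - \x_{k+1})$. Substituting this into \eqref{eq:AG_wbarbar} and using $\kappa = \L/\a$ gives
\[ \bar{\bar{\w}}_k = \w_k - \frac{\L}{\a}(\w_k - \x_{k+1}) = (1-\kappa)\w_k + \kappa\x_{k+1}. \]

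Next I would expand the claimed right-hand side $R_k := \kappa^{-1/2}\bar{\bar{\w}}_k + (1-\kappa^{-1/2})\y_k$ using this formula together with the momentum updates $\w_k = \x_k + \theta(\x_k-\x_{k-1})$ and $\y_k = \x_k + \tau(\x_k-\x_{k-1})$, adopting the convention $\x_{-1} := \x_0$, which is consistent with the initializations $\w_0 = \x_0$ and $\y_0 = \x_0$ and makes the argument uniform in $k \ge 0$. Collecting the coefficients of $\x_{k+1}$, $\x_k$ and $\x_{k-1}$ in $R_k$, I expect the coefficient of $\x_{k+1}$ to be $\kappa^{1/2}$, the coefficient of $\x_k$ to be $\kappa^{-1/2}(1-\kappa) + (1-\kappa^{-1/2}) = 1-\kappa^{1/2}$, and the coefficient of $\x_{k-1}$ to be $\kappa^{-1/2}(1-\kappa)\theta + (1-\kappa^{-1/2})\tau$.

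The one computation requiring care is showing that this last coefficient vanishes; this is where the specific values $\theta = (\sqrt{\kappa}-1)/(\sqrt{\kappa}+1)$ and $\tau = \sqrt{\kappa}-1$ enter. Writing $\kappa^{-1/2}(1-\kappa) = -(\sqrt{\kappa}-1)(\sqrt{\kappa}+1)/\sqrt{\kappa}$, one finds $\kappa^{-1/2}(1-\kappa)\theta = -(\sqrt{\kappa}-1)^2/\sqrt{\kappa}$, while $(1-\kappa^{-1/2})\tau = (\sqrt{\kappa}-1)^2/\sqrt{\kappa}$, so the two terms cancel. Hence $R_k = \kappa^{1/2}\x_{k+1} + (1-\kappa^{1/2})\x_k = \x_{k+1} + (\sqrt{\kappa}-1)(\x_{k+1}-\x_k)$, which is exactly $\y_{k+1}$ by \eqref{eq:AG_y} since $\tau = \sqrt{\kappa}-1$. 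Finally, because $f$ is $\a$-strongly convex and $\L$-smooth we have $\kappa \ge 1$, so $\kappa^{-1/2} \in [0,1]$ and \eqref{eq:nesterov_y} indeed exhibits $\y_{k+1}$ as a genuine convex combination of $\bar{\bar{\w}}_k$ and $\y_k$.

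There is no real obstacle here: the lemma is a direct algebraic verification. The step most prone to slips is the bookkeeping that kills the $\x_{k-1}$ coefficient — but that cancellation is the entire content of the statement, and it is precisely what pins down the choice $\tau = \sqrt{\kappa}-1$ given Nesterov's momentum parameter $\theta$. One should also confirm the $k=0$ boundary case, which is subsumed by the convention $\x_{-1} := \x_0$.
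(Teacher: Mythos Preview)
Your proof is correct: both yours and the paper's are direct algebraic verifications of the same identity, and your coefficient computations (including the cancellation of the $\x_{k-1}$ term) check out. The organization differs slightly: you expand the right-hand side of \eqref{eq:nesterov_y} in the basis $\{\x_{k-1},\x_k,\x_{k+1}\}$ via the observation $\bar{\bar{\w}}_k = (1-\kappa)\w_k + \kappa\x_{k+1}$, whereas the paper first isolates the intermediate closed form $\y_k = (\sqrt{\kappa}+1)\w_k - \sqrt{\kappa}\x_k$ and then manipulates $\y_{k+1}$ forward into the convex combination. The paper's route has the minor advantage that this intermediate identity \eqref{eq:AG_y_other} is reused in the proof of the next lemma, but mathematically there is no substantive difference.
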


\begin{proof} First, we will show that for all $k \ge 0$, the following equality
\begin{equation} \label{eq:AG_y_other} \y_k = (\sqrt{\kappa} + 1) \w_k - \sqrt{\kappa} \cdot \x_k. \end{equation}
The initialization of \eqref{eq:AG_y_initial} and $\w_0 = \x_0$ proves
the result for $k = 0$. Suppose $k \ge 1$. Using equations
\eqref{eq:AG_y} and \eqref{eq:AG_w}, we have
\begin{align*}
\y_k = \x_k + \frac{\tau}{\theta} (\w_k - \x_k) = \x_k + (\sqrt{\kappa} +
  1)(\w_k-\x_k) = (\sqrt{\kappa} + 1) \w_k - \sqrt{\kappa} \x_k.
\end{align*} 
Starting from \eqref{eq:AG_y}, we show $\y_{k+1} \in \text{aff} \{\y_k, \bar{\bar{\w}}_k\}$ as follows
\begin{align*}
\y_{k+1} &= \sqrt{\kappa} \cdot \x_{k+1} - (\sqrt{\kappa} -1 ) \x_k\\
&= \sqrt{\kappa} \left (\w_k - \tfrac{1}{\alpha\kappa}
  G_{1/\L}(\w_k) \right ) - (\sqrt{\kappa} -1 ) \x_k\\
&= \frac{1}{\sqrt{\kappa}} \left ( \w_k - \tfrac{1}{\a}
  G_{1/\L}(\w_k) \right ) -
  (\sqrt{\kappa}-1) \x_k + \left  (\sqrt{\kappa} -
  \tfrac{1}{\sqrt{\kappa}} \right ) \w_k\\
&= \tfrac{1}{\sqrt{\kappa}} \bar{\bar{\w}}_k + \left (1-
  \tfrac{1}{\sqrt{\kappa}} \right ) \left ( (\sqrt{\kappa} +1) \w_k -
  \sqrt{\kappa} \x_k \right )\\
&= \tfrac{1}{\sqrt{\kappa}} \bar{\bar{\w}}_k + \left (1 -
  \tfrac{1}{\sqrt{\kappa}} \right ) \y_k.
\end{align*}
\end{proof}

\begin{lem} Fix $\x_0 \in \R^n$. Suppose $\{\x_k, \w_k\}_{k\ge 0}$ are
  iterates generated by Algorithm~\ref{alg:AG} and let the iterates
  $\{\y_k\}_{k \ge 0}$ be defined as in
  \eqref{eq:AG_y} with $\y_0 = \x_0$. Then for each $k = 0,1, \ldots$,
  we have 
\begin{align} \label{eq:CG_recurr}
\norm{\y_k-\x^*}^2 + \frac{2 (F(\x_k)-F(\x^*) )}{\alpha} \le \tilde{\sigma}_k^2.
\end{align}
\end{lem}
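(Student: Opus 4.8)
The plan is to prove \eqref{eq:CG_recurr} by induction on $k$, paralleling the potential-decrease argument for the idealized algorithm in Theorem~\ref{thm:converge_IA} and for geometric descent, but now using that the auxiliary sequences $\y_k$ and $\tilde\sigma_k$ are defined so that the ball-intersection mechanism of Corollary~\ref{cor:intersection} produces exactly the convex combination \eqref{eq:nesterov_y} established in Lemma~\ref{lem:nesterov_affine}. The base case $k=0$ is immediate from the definition \eqref{eq:ag_sigma0} of $\tilde\sigma_0$, which is an equality. So assume \eqref{eq:CG_recurr} holds at index $k$; I want to deduce it at index $k+1$.

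First I would set up the two balls. Applying \eqref{eq:strong_cvx} of Lemma~\ref{lem:strongly_convex} with $\x=\w_k$ and $\y=\x^*$, and recalling $\x_{k+1}=\bar{\w}_k=\w_k-G_{1/\L}(\w_k)/\L$ from \eqref{eq:AG_x}, I get, exactly as in \eqref{eq:rho} of the IA analysis, that $\x^*\in B(\bar{\bar\w}_k,\rho_k)$ where
\[
\rho_k^2 = \left(1-\frac{\a}{\L}\right)\frac{\norm{G_{1/\L}(\w_k)}^2}{\a^2} - \frac{2(F(\x_{k+1})-F(\x^*))}{\a}.
\]
Second, the induction hypothesis gives $\x^*\in B(\y_k,\sigma_k)$ with
\[
\sigma_k^2 = \tilde\sigma_k^2 - \frac{2(F(\x_{k+1})-F(\x^*))}{\a} - \frac{\a}{\L}\frac{\norm{G_{1/\L}(\w_k)}^2}{\a^2},
\]
where the two subtracted terms come from peeling off the contribution $\frac{2(F(\x_k)-F(\x^*))}{\a}$ on the left of the hypothesis and using Lemma~\ref{lem:smooth} (with $f$ not necessarily needing strong convexity here) in the form $F(\x_{k+1})=F(\bar\w_k)\le F(\x_k)-\norm{G_{1/\L}(\w_k)}^2/(2\L)$ together with the algebraic identity $\frac{1}{2\L}\cdot 2/\a = \frac{\a}{\L}\cdot\frac{1}{\a^2}$. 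This is the same bookkeeping as \eqref{eq: IA_y_bound}. I would then identify $r_1^2=\tilde\sigma_k^2$, $r_2^2=\norm{G_{1/\L}(\w_k)}^2/\a^2$, $\varepsilon=\a/\L=1/\kappa$, and $C=\frac{2(F(\x_{k+1})-F(\x^*))}{\a}$ so that $\rho_k^2=(1-\varepsilon)r_2^2-C$ and $\sigma_k^2=r_1^2-\varepsilon r_2^2-C$, matching Corollary~\ref{cor:intersection}.

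Third, I need to check the hypotheses of Corollary~\ref{cor:intersection}: $\norm{\y_k-\bar{\bar\w}_k}\ge r_2$ and the chord condition $\norm{\y_k-\bar{\bar\w}_k}\le\sqrt{r_1^2-\varepsilon r_2^2-C}+\sqrt{(1-\varepsilon)r_2^2-C}$. The chord condition follows from the triangle inequality $\norm{\y_k-\bar{\bar\w}_k}\le\norm{\y_k-\x^*}+\norm{\bar{\bar\w}_k-\x^*}\le\sigma_k+\rho_k$ using the two ball memberships just established. For $\norm{\y_k-\bar{\bar\w}_k}\ge r_2$, I would use the formula \eqref{eq:AG_y_other}, $\y_k=(\sqrt\kappa+1)\w_k-\sqrt\kappa\,\x_k$, to compute $\y_k-\w_k=\sqrt\kappa(\w_k-\x_k)=\frac{\sqrt\kappa}{\L}G_{1/\L}(\w_k)$, hence $\y_k-\bar{\bar\w}_k=\y_k-\w_k+\frac{1}{\a}G_{1/\L}(\w_k)=\left(\frac{\sqrt\kappa}{\L}+\frac{1}{\a}\right)G_{1/\L}(\w_k)$; since $\frac{\sqrt\kappa}{\L}+\frac1\a\ge\frac1\a$, taking norms gives $\norm{\y_k-\bar{\bar\w}_k}\ge\frac1\a\norm{G_{1/\L}(\w_k)}=r_2$. (This collinearity is cleaner than the general IA case, where only an inequality was available via $G_{1/\L}(\z_k)^T(\y_k-\z_k)\ge0$.) Then Corollary~\ref{cor:intersection} yields a point $\z^*=(1-\lambda)\bar{\bar\w}_k+\lambda\y_k$ with $\lambda=(2r_2^2-r_1^2)/(2r_2^2)$ (in the case $r_1^2\le 2r_2^2$) or $\z^*=\bar{\bar\w}_k$ (when $r_1^2>2r_2^2$), such that $B(\y_k,\sigma_k)\cap B(\bar{\bar\w}_k,\rho_k)\subset B(\z^*,\sqrt{(1-\sqrt\varepsilon)r_1^2-C})$. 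Since $\x^*$ lies in both balls, $\norm{\z^*-\x^*}^2\le(1-\sqrt\varepsilon)\tilde\sigma_k^2-C=(1-\kappa^{-1/2})\tilde\sigma_k^2-\frac{2(F(\x_{k+1})-F(\x^*))}{\a}$.

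Finally I would identify $\z^*$ with $\y_{k+1}$. Lemma~\ref{lem:nesterov_affine} already shows $\y_{k+1}=\kappa^{-1/2}\bar{\bar\w}_k+(1-\kappa^{-1/2})\y_k$, i.e. the convex combination with weight $1-\kappa^{-1/2}$ on $\y_k$, which means Nesterov's fixed momentum forces $\lambda=1-\kappa^{-1/2}$ always, rather than the $\lambda$ that Corollary~\ref{cor:intersection} would select. The main obstacle — and really the only substantive point — is therefore to show that plugging this suboptimal, constant $\lambda=1-\kappa^{-1/2}$ into Lemma~\ref{lem:intersection_ball_ideal} still gives a radius $\xi$ with $\xi^2\le(1-\kappa^{-1/2})\tilde\sigma_k^2-C$. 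Concretely, from Lemma~\ref{lem:intersection_ball_ideal} with $\delta^2=r_2^2$, $\rho=\rho_k$, $\sigma=\sigma_k$, and this $\lambda$, one gets $\xi^2=(1-\lambda)\rho_k^2+\lambda\sigma_k^2-\lambda(1-\lambda)r_2^2$; substituting $\rho_k^2=(1-\varepsilon)r_2^2-C$, $\sigma_k^2=\tilde\sigma_k^2-\varepsilon r_2^2-C$, $\lambda=1-\sqrt\varepsilon$ and simplifying, the $C$ terms collect to $-C$ and the remaining $r_2^2$- and $\tilde\sigma_k^2$-terms must be shown to be $\le(1-\sqrt\varepsilon)\tilde\sigma_k^2$; this reduces to a one-variable polynomial inequality in $\sqrt\varepsilon$ that holds for $\varepsilon\in[0,1]$ (it should come down to a perfect-square argument like the $\frac12(\sqrt\varepsilon-1)^2\ge0$ bound used in Corollary~\ref{cor:intersection} and the convergence proof of GD). Once that inequality is in hand, $\norm{\y_{k+1}-\x^*}^2\le\xi^2\le(1-\kappa^{-1/2})\tilde\sigma_k^2-\frac{2(F(\x_{k+1})-F(\x^*))}{\a}$, and adding $\frac{2(F(\x_{k+1})-F(\x^*))}{\a}$ to both sides gives \eqref{eq:CG_recurr} at $k+1$, using $\tilde\sigma_{k+1}^2\ge(1-\kappa^{-1/2})\tilde\sigma_k^2-(\kappa^{1/2}-\kappa^{-1/2})\norm{\w_k-\x_k}^2$ from \eqref{eq:ag_sigmak1} and $\norm{\w_k-\x_k}=\frac1\L\norm{G_{1/\L}(\w_k)}$ to match the $r_2^2$ terms, completing the induction.
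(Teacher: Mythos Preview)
Your argument has two genuine gaps, both stemming from an index confusion. First, the identity you rely on, $\w_k-\x_k=\tfrac1\L G_{1/\L}(\w_k)$, is false: from \eqref{eq:AG_x} one has $\x_k=\bar\w_{k-1}$, so $\w_{k-1}-\x_k=\tfrac1\L G_{1/\L}(\w_{k-1})$, whereas $\w_k-\x_k$ is a genuinely different vector (it equals $\theta(\x_k-\x_{k-1})$ by \eqref{eq:AG_w}). Consequently the collinearity claim $\y_k-\bar{\bar\w}_k=\bigl(\tfrac{\sqrt\kappa}{\L}+\tfrac1\a\bigr)G_{1/\L}(\w_k)$ fails, and with it your verification of $\norm{\y_k-\bar{\bar\w}_k}\ge r_2$ and the final matching of $\norm{\w_k-\x_k}$ to $r_2$. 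Second, the descent inequality $F(\x_{k+1})\le F(\x_k)-\tfrac1{2\L}\norm{G_{1/\L}(\w_k)}^2$ that you invoke when defining $\sigma_k$ is \emph{not} available for AG: Lemma~\ref{lem:smooth} with $\x=\w_k$, $\y=\x_k$ leaves the cross term $G_{1/\L}(\w_k)^T(\w_k-\x_k)$, and there is no line search (as there is in IA and GD) to kill it. Without this, your $\sigma_k$ is not a valid radius. Finally, even granting your $\xi^2\le(1-\kappa^{-1/2})\tilde\sigma_k^2-C$, this does not imply $\xi^2\le\tilde\sigma_{k+1}^2-C$, since \eqref{eq:ag_sigmak1} gives $\tilde\sigma_{k+1}^2\le(1-\kappa^{-1/2})\tilde\sigma_k^2$, the inequality in the wrong direction.

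The paper's proof avoids all of this by working with Lemma~\ref{lem:intersection_ball_ideal} directly rather than Corollary~\ref{cor:intersection}. It keeps $\sigma^2=\tilde\sigma_k^2-\tfrac{2(F(\x_k)-F(\x^*))}{\a}$ (no attempt to extract a common $C$), and instead puts the missing descent information into the bound on $\delta^2=\norm{\bar{\bar\w}_k-\y_k}^2$: applying Lemma~\ref{lem:strongly_convex} with $\y=\x_k$, $\x=\w_k$ controls the cross term $G_{1/\L}(\w_k)^T(\x_k-\w_k)$ and produces the extra $\sqrt\kappa\,\norm{\x_k-\w_k}^2$ contribution. One then plugs $\lambda=1-\kappa^{-1/2}$ into Lemma~\ref{lem:intersection_ball_ideal} and computes $\xi^2$ explicitly; the algebra collapses exactly to $\tilde\sigma_{k+1}^2-\tfrac{2(F(\x_{k+1})-F(\x^*))}{\a}$, with the $-(\kappa^{1/2}-\kappa^{-1/2})\norm{\w_k-\x_k}^2$ term in \eqref{eq:ag_sigmak1} emerging from the $\delta$-bound rather than from any identification of $\w_k-\x_k$ with a prox-gradient.
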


\begin{proof} We prove \eqref{eq:CG_recurr} by induction. For the case
  $k =0$, the results follows from \eqref{eq:ag_sigma0}. We assume
  that \eqref{eq:CG_recurr} holds. We take $\z$ and $\y$ appearing in
  Lemma~\ref{lem:intersection_ball_ideal} to be $\bar{\bar{\w}}_k$ and
  $\y_k$ respectively. Next we define $\delta, \rho, \sigma$ to be used
  in Lemma~\ref{lem:intersection_ball_ideal}. In the case of $\rho$,
  we copy the definition used in the analysis of IA:
\begin{align*}
\norm{\bar{\bar{\w}}_k-\x^*}^2 &\le \left ( 1 -
  \frac{\a}{\L} \right ) \frac{\norm{G_{1/\L}(\w_k)}^2}{\a^2} - \frac{2
  (F(\bar{\w}_k)-F(\x^*))}{\a}\\
&= \left ( 1 - \frac{\a}{\L } \right )
  \frac{\norm{G_{1/\L}(\w_k)}^2}{\a^2} - \frac{2( F(\x_{k+1})-F(\x^*))}{\a} =: \rho^2.
\end{align*}
We use the induction hypothesis to define $\sigma$:
\begin{align*}
\norm{\y_k-\x^*}^2 \le \tilde{\sigma}_k^2 -
  \frac{2(F(\x_k)-F(\x^*))}{\a} =: \sigma^2.
\end{align*}
For the radius $\delta$, first, we observe using Lemma~\ref{lem:strongly_convex} with $\y = \x_k$, $\x = \w_k$, and $\bar{\x} = \x_{k+1}$ the following inequality holds
\begin{equation}
\begin{aligned} \label{eq:Nesterov_1}
\frac{-2\sqrt{\kappa}}{\alpha} G_{1/\L}(\w_k)^T&(\x_k-\w_k)\\
& \ge \frac{2
  \sqrt{\kappa} ( F(\x_{k+1})-F(\x_k)) }{\a} + \frac{\sqrt{\kappa}}{
  \a \L}
  \norm{G_{1/\L}(\w_k)}^2 + \sqrt{\kappa} \norm{\x_k-\w_k}^2.
\end{aligned}
\end{equation}
We construct a choice for $\delta$ starting from equations
\eqref{eq:AG_wbarbar} and  \eqref{eq:AG_y_other}:
\begin{align*}
\norm{\bar{\bar{\w}}_k-\y_k}^2 &= \norm{\sqrt{\kappa}(\x_k-\w_k) -
                               \tfrac{G_{1/\L}(\w_k)}{\a}}^2\\
&= \kappa \norm{\x_k-\w_k}^2 + \frac{1}{\a^2} \norm{G_{1/\L}(\w_k)}^2
  - \frac{2\sqrt{\kappa}}{\a} G_{1/\L}(\w_k)^T(\x_k-\w_k)\\
&\ge (\kappa + \sqrt{\kappa}) \norm{\x_k-\w_k}^2 + \left (
  1 + \frac{\a\sqrt{\kappa}}{ \L} \right )
  \frac{\norm{G_{1/\L}(\w_k)}^2}{\a^2} + \frac{2\sqrt{\kappa}
  (F(\x_{k+1})-F(\x_k))}{\a} =: \delta^2,
\end{align*}
where the last inequality follows from \eqref{eq:Nesterov_1}.
Finally, by Lemma~\ref{lem:intersection_ball_ideal}, in which
we take
$\lambda = 1-\kappa^{-1/2}$ (and hence $\lambda(1-\lambda) =
\kappa^{-1/2} - \kappa^{-1}$),
we have 
\begin{align*}
\norm{\y_{k+1}-\x^*}^2 &\le \frac{1}{\sqrt{\kappa}} \left ( \left
  (1 - \frac{\a}{\L}
  \right ) \frac{\norm{G_{1/\L}(\w_k)}^2}{\a^2} - \frac{2(F(\x_{k+1})-F(\x^*))}{\a}
  \right )\\
&\hphantom{=} \quad\mbox{} + \left (1-\frac{1}{\sqrt{\kappa}} \right ) \left (
  \tilde{\sigma}_k^2 - \frac{2(F(\x_k)-F(\x^*))}{\alpha} \right )\\
& \hphantom{=}\quad\mbox{} - \left ( \frac{1}{\sqrt{\kappa}} - \frac{1}{\kappa} \right )
  \biggl ( (\kappa + \sqrt{\kappa}) \norm{\x_k-\w_k}^2 +
  \frac{2\sqrt{\kappa}  (F(\x_{k+1})-F(\x_k)) }{\a}\\
  &\hphantom{=}\quad\quad\quad\quad\mbox{}+ \left (
  1 + \frac{\a \sqrt{\kappa}}{ \L} \right)
  \frac{\norm{G_{1/\L}(\w_k)}^2}{\a^2} \biggr )\\
&  = \left ( 1-\frac{1}{\sqrt{\kappa}} \right ) \tilde{\sigma}_k^2 -
  \frac{2}{\alpha} \left ( F(\x_{k+1})-F(\x^*) \right ) - \left (
  \sqrt{\kappa} - \frac{1}{\sqrt{\kappa}} \right ) \norm{\x_k-\w_k}^2\\
& = \tilde{\sigma}_{k+1}^2 - \frac{2(F(\x_{k+1})-F(\x^*))}{\a};
\end{align*}
thereby completing the induction. 
\end{proof}

\section{Non-strongly convex and nonconvex setting}\label{sec:IA-cvx}

We turn to the composite setting
\begin{equation}\min_{\x}~ F(\x) := f(\x) + \Psi(\x), \label{eq:convex_program} \end{equation}
where $f: \R^n \to \R$ is an $L$-smooth, possibly nonconvex function and $\Psi: \R^n \to \oR$ is a closed, proper convex function. As before, we construct a general algorithmic framework for which geometric descent, Nesterov's accelerated method \cite{intro_lect}, and Lanczos method for solving trust region problem \cite{CG_Lanczos} all approximate and in the process derive a potential-like function for all three algorithms. We suppose throughout this section that the minimum of \eqref{eq:convex_program} occurs at some point denoted by $\x^*$. 
In the case of multiple minimizers, we allow $\x^*$ to be any minimizer, but we require the choice of $\x^*$ to be fixed.
We consider separately the cases that $f$ is convex (but not assumed to be strongly convex) and that $f$ is nonconvex. 

	\begin{algorithm}[ht!]
          \textbf{Initialization:} Fix $\x_0 \in \R^n$. \\
\textbf{Set:} $\z_0 = \x_0$, and any affine subspace $\mathcal{M}_1 \supseteq \z_0 + \text{span} \{ G_{1/\L}(\z_0) \}$\\
\textbf{for} $k = 1, 2, \hdots $
\begin{enumerate}
\item \textbf{Compute $\y_k$:} Let
\[ \y_k = \argmin_{\y} \{ \norm{\y-\x^*}^2 \, : \, \y \in \mathcal{M}_k\}. \]
\item \textbf{Compute $\x_k$:} Choose $\x_{k}$ such that
\begin{minipage}{0.85 \textwidth} \begin{equation} \label{eq:IA_x_convex} \x_k \in \argmin_{\x}
    \{ F(\x) \, : \, \x \in \mathcal{M}_k\}. \end{equation} \end{minipage}
\item \textbf{Compute $\z_k$:} Select an auxiliary $\z_k \in
  \mathcal{M}_k$ so that 
\begin{minipage}{0.85 \textwidth} \begin{equation} \label{eq:IA_z_convex}  G_{1/\L}(\z_k)^T(\y_k-\z_k) \ge 0 \quad \text{and} \quad
-G_{1/\L}(\z_k)^T(\x_k-\z_k)\le 0,
\end{equation} \end{minipage}
\qquad where $\bar{\z}_k = \z_k - \frac{1}{\L}G_{1/\L}(\z_k)$.
\item \textbf{Update affine subspace:}
\begin{align*} \mathcal{M}_{k+1} \supseteq \{\z_k + &\text{span}\{G_{1/\L}(\z_k)\} \} \cup \{\y_k + \text{span}\{G_{1/\L}(\z_k)\}\}\\
&\cup \{\x_k + \text{span}\{G_{1/\L}(\x_k)\}\}. 
\end{align*}
\end{enumerate}
\textbf{end}
		\caption{Idealized algorithmic framework for constraints (IA) in non-strongly convex and nonconvex settings}
		\label{alg:IA_convex}
	\end{algorithm}

Algorithm~\ref{alg:IA_convex} is similar to Algorithm~\ref{alg:IA}; the main distinction between the two is the subspace (Step 4). Algorithm~\ref{alg:IA_convex}, for the convex setting, requires a slightly larger subspace than in the strongly convex setting. Note that a solution to \eqref{eq:IA_z_convex} always exists as proved in Lemma~\ref{lem: exist_z}.

Our first analysis covers the case that $f$ is convex.
In this case, Lemma~\ref{lem:add_bound} states that \eqref{eq:IA_z_convex} implies
\begin{equation}
    \label{eq:IA_convex_Fdesc}F(\bar{\z}_k) \le F(\x_k) - \frac{1}{2\L} \norm{G_{1/\L}(\z_k)}^2.
\end{equation}

Also for the convex case, we define a potential-like quantity as follows for any $k \ge 1$:
\begin{equation}
    \Phi_k = \norm{\y_k-\x^*}^2 + \frac{k(k+1)(F(\x_k)-F(\x^*))}{2\L}.
    \label{eq:Phi_def_conv}
\end{equation}
A similar potential was used in Nesterov's original analysis \cite{nest_orig}
of accelerated gradient (for the non-composite case).

\begin{thm}[Convergence of IA, convex and nonconvex] Fix a point $\x_0 \in \R^n$. Assume $F=f+\Psi$ where $f$ is $\L$-smooth and $\Psi$ is closed, proper and convex. Then the iterates $\{\x_k, \y_k, \z_k\}_{k \ge 1}$ generated by Algorithm~\ref{alg:IA_convex} satisfy for $k = 1, 2, \hdots$
satisfy 
\begin{equation} F(\bar{\x}_k) \le F(\bar{\x}_{k-1}) - \frac{1}{2\L} \norm{G_{1/\L}(\x_k)}^2, \label{eq:IA_nonconvex} \end{equation}
where, as usual, $\bar{\x}_k = \x_k - \frac{G_{1/\L}(\x_k)}{\L}$
If, in addition, the function $f$ in \eqref{eq:convex_program} is convex, then the iterates $\{\x_k, \y_k, \z_k\}_{k \ge 1}$ generated by Algorithm~\ref{alg:IA_convex} satisfy for $k = 1, 2, \hdots$
\begin{equation}
 \Phi_{k+1} \le \Phi_k.
 \label{eq:Phi_decr}
 \end{equation}
\end{thm}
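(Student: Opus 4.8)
### Proof Proposal

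The plan is to prove the two claims in sequence, handling the easy nonconvex inequality first and then the harder convex potential decrease. For \eqref{eq:IA_nonconvex}: since $\x_k \in \mathcal{M}_k$ and the update rule guarantees $\x_k + \Span\{G_{1/\L}(\x_k)\} \subseteq \mathcal{M}_{k+1}$, in particular $\bar{\x}_k \in \mathcal{M}_{k+1}$. Because $\x_{k+1}$ minimizes $F$ over $\mathcal{M}_{k+1}$, we get $F(\x_{k+1}) \le F(\bar{\x}_k)$, and one more application of the same identity at the next level gives $F(\bar{\x}_{k+1}) \le F(\x_{k+1})$. Combined with the last (non-convex) inequality of Lemma~\ref{lem:smooth} applied at $\x_k$, namely $F(\bar{\x}_k) \le F(\x_k) - \frac{1}{2\L}\norm{G_{1/\L}(\x_k)}^2$, and the fact that $F(\x_k) \le F(\bar{\x}_{k-1})$ (since $\bar{\x}_{k-1} \in \mathcal{M}_k$), we chain these to obtain $F(\bar{\x}_k) \le F(\bar{\x}_{k-1}) - \frac{1}{2\L}\norm{G_{1/\L}(\x_k)}^2$. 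I would also need to treat the base case $k=1$ separately, using $\bar{\x}_0 = \bar{\z}_0$ and $\z_0 = \x_0 \in \mathcal{M}_1$.

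For the convex potential decrease \eqref{eq:Phi_decr}, I would mimic the structure of the strongly convex proof (Theorem~\ref{thm:converge_IA}) but with the new potential $\Phi_k = \norm{\y_k-\x^*}^2 + \frac{k(k+1)(F(\x_k)-F(\x^*))}{2\L}$. The key device is again Corollary~\ref{cor:intersection}, or possibly the raw Lemma~\ref{lem:intersection_ball_ideal} with a carefully chosen $\lambda$ (here $\lambda$ should scale like $1/(k+2)$ rather than $\sqrt{\a/\L}$, matching the $1/k^2$ rate). I would set up two balls containing $\x^*$: one centered at $\y_k$ with radius squared $\sigma_k^2$ coming from the induction hypothesis $\norm{\y_k-\x^*}^2 \le \Phi_k - \frac{k(k+1)}{2\L}(F(\x_k)-F(\x^*))$, and one centered at some point built from $\z_k$ (or $\x_k$) using Lemma~\ref{lem:smooth} with $\y = \x^*$ and $\x = \z_k$: this gives $F(\bar{\z}_k) \le F(\x^*) + G_{1/\L}(\z_k)^T(\z_k - \x^*) - \frac{1}{2\L}\norm{G_{1/\L}(\z_k)}^2$, which controls $\norm{\z_k - \frac{c}{\L}G_{1/\L}(\z_k) - \x^*}^2$ for an appropriate scaling $c$. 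The condition $G_{1/\L}(\z_k)^T(\y_k - \z_k) \ge 0$ from \eqref{eq:IA_z_convex} is exactly what is needed to lower-bound the distance between the two ball centers, paralleling \eqref{eq:IA_1}. Then the enclosing ball center lands in $\mathcal{M}_{k+1}$ (this is why the larger subspace including $\y_k + \Span\{G_{1/\L}(\z_k)\}$ is needed), so $\norm{\y_{k+1}-\x^*}^2$ is bounded by its radius, and rearranging yields $\Phi_{k+1} \le \Phi_k$ after using \eqref{eq:IA_convex_Fdesc} to pass from $F(\bar{\z}_k)$ to $F(\x_{k+1})$ (since $\bar{\z}_k \in \mathcal{M}_{k+1}$ forces $F(\x_{k+1}) \le F(\bar{\z}_k)$).

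The main obstacle I anticipate is getting the algebra of the radii and the choice of $\lambda$ to work out so that the coefficient of $(F(\x_{k+1})-F(\x^*))$ in the final bound is exactly $\frac{(k+1)(k+2)}{2\L}$ while the coefficient that "leaks" from the $\frac{k(k+1)}{2\L}$ term in $\Phi_k$ and from the $-\frac{1}{2\L}\norm{G_{1/\L}(\z_k)}^2$ descent term balances correctly. Concretely, one needs the telescoping identity that ties together the weight $k(k+1)/2$ with the step-size-like scaling in the ball construction — this is the classical "estimate sequence" bookkeeping underlying Nesterov's $O(1/k^2)$ analysis, recast here geometrically. A secondary subtlety is ensuring the hypotheses of Lemma~\ref{lem:intersection_ball_ideal} (or Corollary~\ref{cor:intersection}) actually hold, i.e.\ that the relevant radii are nonnegative and that $\rho + \sigma \ge \norm{\text{center}_1 - \text{center}_2}$; the latter follows from the triangle inequality since both centers' balls contain $\x^*$, but nonnegativity of the radii requires care and may force a case split analogous to the $r_1^2 \gtrless 2r_2^2$ split in Corollary~\ref{cor:intersection}. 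I would also need to verify the base case $k=1$ directly from the initialization $\z_0 = \x_0$, $\mathcal{M}_1 \supseteq \z_0 + \Span\{G_{1/\L}(\z_0)\}$.
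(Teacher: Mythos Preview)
Your treatment of \eqref{eq:IA_nonconvex} is correct and matches the paper: $\bar{\x}_{k-1}\in\mathcal{M}_k$ by the subspace update (and $\bar{\x}_0=\bar{\z}_0\in\mathcal{M}_1$ from initialization), so $F(\x_k)\le F(\bar{\x}_{k-1})$, and the last inequality of Lemma~\ref{lem:smooth} at $\x=\x_k$ finishes it.

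For \eqref{eq:Phi_decr}, however, the paper does \emph{not} use the two-ball machinery of Lemma~\ref{lem:intersection_ball_ideal} or Corollary~\ref{cor:intersection}. It takes a more direct route: define a single trial point $\tilde{\y}_k:=\y_k-\gamma_k G_{1/\L}(\z_k)$ with $\gamma_k=\frac{k+1}{2\L}$, expand $\norm{\tilde{\y}_k-\x^*}^2$, and use (i) Lemma~\ref{lem:smooth} with $\x=\z_k$, $\y=\x^*$, (ii) the descent inequality \eqref{eq:IA_convex_Fdesc}, and (iii) the sign condition $G_{1/\L}(\z_k)^T(\y_k-\z_k)\ge 0$ from \eqref{eq:IA_z_convex}, to obtain
\[
\norm{\tilde{\y}_k-\x^*}^2\le \Phi_k-\frac{(k+1)(k+2)}{2\L}\bigl(F(\x_{k+1})-F(\x^*)\bigr).
\]
Since $\tilde{\y}_k\in\y_k+\Span\{G_{1/\L}(\z_k)\}\subset\mathcal{M}_{k+1}$, optimality of $\y_{k+1}$ gives $\Phi_{k+1}\le\Phi_k$.

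Your two-ball plan faces a genuine obstacle, not just algebraic friction. In the strongly convex proof, the second ball $B(\bar{\bar{\z}}_k,\rho_k)$ exists because Lemma~\ref{lem:strongly_convex} contributes the quadratic term $\frac{\a}{2}\norm{\z_k-\x^*}^2$, which is what lets you bound $\norm{\bar{\bar{\z}}_k-\x^*}^2$. With $\a=0$, Lemma~\ref{lem:smooth} at $\x=\z_k$, $\y=\x^*$ gives only the half-space constraint
\[
G_{1/\L}(\z_k)^T(\z_k-\x^*)\ge F(\bar{\z}_k)-F(\x^*)+\frac{1}{2\L}\norm{G_{1/\L}(\z_k)}^2,
\]
with no control on $\norm{\z_k-\x^*}^2$; so there is no second ball of finite known radius centered near $\z_k$, and Corollary~\ref{cor:intersection} does not apply. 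If you try to push Lemma~\ref{lem:intersection_ball_ideal} through by sending $\rho\to\infty$ and $\lambda\to 0$ in a coupled way, you collapse exactly onto the paper's single-step expansion. The ``case split analogous to $r_1^2\gtrless 2r_2^2$'' you anticipate is a symptom of this: one of the two balls is missing.
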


\begin{proof} We begin by proving \eqref{eq:IA_nonconvex}. 
First, we observe the point $\bar{\x}_{k-1} \in \mathcal{M}_k$ and the point $\x_k$ is the minimizer of $F$ over $\mathcal{M}_k$. Lemma~\ref{lem:smooth} with $\x = \x_k$ and these observations gives
\[ F(\bar{\x}_k) \le F(\x_k) - \tfrac{1}{2\L} \norm{G_{1/\L}(\x_k)}^2 \le F(\bar{\x}_{k-1}) - \tfrac{1}{2\L} \norm{G_{1/\L}(\x_k)}^2;\]
thus we obtain the desired result. 

For the remainder of this proof, we assume the function $f$ is convex. We define the following quantity
\[ \tilde{\y}_k = \y_k - \gamma_k G_{1/\L}(\z_k), \]
for some $\gamma_k \in \R$ that will be chosen later. 
 Since $\bar{\z}_k \in \mathcal{M}_{k+1}$, $F(\x_{k+1}) \le F(\bar{\z}_k)$.  Combining this with \eqref{eq:IA_convex_Fdesc} implies 
\begin{align} \label{eq:IA_convex_1}
    F(\x_{k+1}) - F(\x_k) \le F(\bar{\z}_k)-F(\x_{k}) \le -\frac{1}{2\L} \norm{G_{1/\L}(\z_k)}^2.
\end{align}
Moreover from Lemma~\ref{lem:smooth} with $\x = \z_k$ and $\y = \x^*$, we conclude 
\begin{align}
    \label{eq:IA_convex_2} F(\x_{k+1}) - F(\x^*) \le F(\bar{\z}_k)-F(\x^*) \le G_{1/\L}(\z_k)^T(\z_k-\x^*) - \frac{\norm{G_{1/\L}(\z_k)}^2}{2\L}.
\end{align}
We combine equations~\eqref{eq:IA_convex_1} and \eqref{eq:IA_convex_2} to conclude
\begin{equation}\begin{aligned} \label{eq:IA_convex_bound}
    & \frac{(k+1)(k+2)(F(\x_{k+1})-F(\x^*))}{2\L} 
    - \frac{k(k+1)(F(\x_k)-F(\x^*))}{2\L}\\
    & \qquad \qquad \qquad \qquad = \frac{k(k+1)(F(\x_{k+1})-F(\x_k))}{2\L} + \frac{2(k+1)(F(\x_{k+1})-F(\x^*))}{2\L}\\
    & \qquad \qquad \qquad \qquad \le \frac{2(k+1)}{2\L} G_{1/\L}(\z_k)^T(\z_k-\x^*)-\frac{(k+1)(k+2)}{2\L} \left ( \frac{\norm{G_{1/\L}(\z_k)}^2}{2\L} \right ).
\end{aligned}
\end{equation}
We use this result to compute the following:
\begin{equation}
    \label{eq:IA_potential_nonstrong}
 \begin{aligned}
& \norm{\tilde{\y}_k-\x^*}^2 = \norm{\y_k-\x^*}^2 - 2\gamma_k G_{1/\L}(\z_k)^T (\y_k-\x^*) + \gamma_k^2 \norm{G_{1/\L}(\z_k)}^2\\
& \qquad \qquad + \frac{k(k+1)(F(\x_k)-F(\x^*))}{2\L} -  \frac{(k+1)(k+2)(F(\x_{k+1})-F(\x^*))}{2\L}\\
& \qquad \qquad + \frac{(k+1)(k+2)(F(\x_{k+1})-F(\x^*))}{2\L} - \frac{k(k+1)(F(\x_k)-F(\x^*))}{2\L}\\
&\quad \le \Phi_k - 2\gamma_k G_{1/\L}(\z_k)^T (\y_k-\x^*) + \gamma_k^2 \norm{G_{1/\L}(\z_k)}^2 -  \frac{(k+1)(k+2)(F(\x_{k+1})-F(\x^*))}{2\L}\\
& \qquad \qquad + \frac{2(k+1)}{2\L} G_{1/\L}(\z_k)^T(\z_k-\x^*)-\frac{(k+1)(k+2)}{2\L} \left ( \frac{\norm{G_{1/\L}(\z_k)}^2}{2\L} \right ).
\end{aligned} 
\end{equation}
We choose $\gamma_k$ so that we can combine the two inner product terms, namely $\gamma_k = \frac{k+1}{2\L}$. This choice of $\gamma_k$ yields $\gamma_k^2 = \frac{(k+1)^2}{2\L} \cdot \frac{1}{2\L}$ and hence
\[ 2\L \cdot \gamma_k^2 - \frac{(k+1)(k+2)}{2\L} = \frac{(k+1)^2}{2\L} - \frac{(k+1)(k+2)}{2\L} \le 0. \]
This together with the first expression in \eqref{eq:IA_z_convex} yields
\begin{align*}
    2\gamma_k G_{1/\L}(\z_k)^T(\z_k-\y_k) \le 0 \quad \text{and} \quad \left (2\L \cdot \gamma_k^2 - \frac{(k+1)(k+2)}{2\L} \right ) \frac{\norm{G_{1/\L}(\z_k)}^2}{2\L} \le 0. 
\end{align*}
Substituting in \eqref{eq:IA_potential_nonstrong} yields,
\[ \norm{\tilde{\y}_k-\x^*}^2 \le \Phi_k - \frac{(k+1)(k+2)(F(\x_{k+1})-F(\x^*))}{2\L}. \]
Since $\y_{k+1}$ is the optimizer of $\norm{\y-\x^*}$ over $\y \in \mathcal{M}_{k+1}$ and $\tilde{\y}_k \in \mathcal{M}_{k+1}$, then $\y_{k+1}$ is at least as close to $\x^*$ as $\tilde{\y}_k$. Hence, we conclude 
\[ \norm{\y_{k+1}-\x^*}^2 \le \norm{\tilde{\y}_k-\x^*}^2 \le \Phi_k - \frac{(k+2)(k+1)(F(\x_{k+1})-F(\x^*))}{2\L}. \]
The result follows. 
\end{proof}

The preceding theorem yields the optimal convergence rate for
both convex and nonconvex functions as shown in the following 
corollaries.

\begin{cor}
Assume $f$ is convex and $\L$-smooth, and $F=f+\Psi$, where
$\Psi$ is closed, proper and convex.  Then for all $\x_k$ generated by Algorithm~\ref{alg:IA_convex}, the following holds for all $k \ge 1$ 
\[F(\x_k)-F(\x^*)\le \frac{2(\L\norm{\x_0-\x^*}+F(\x_0)-F(\x^*))}{k(k+1)}.\]
\label{cor:IA_cvx_rate}
\end{cor}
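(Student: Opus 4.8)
The plan is to chain the potential decrease inequality \eqref{eq:Phi_decr} from the preceding theorem down to the base case and then extract a bound on $F(\x_k)-F(\x^*)$ by throwing away the nonnegative term $\norm{\y_k-\x^*}^2$. Concretely, since $\Phi_{k+1}\le \Phi_k$ holds for all $k\ge 1$, iterating gives $\Phi_k \le \Phi_1$ for every $k\ge 1$. From the definition \eqref{eq:Phi_def_conv} of $\Phi_k$ and nonnegativity of the squared norm, we get
\[
\frac{k(k+1)(F(\x_k)-F(\x^*))}{2\L} \le \Phi_k \le \Phi_1,
\]
so it remains only to bound $\Phi_1$ by the claimed quantity $\L\norm{\x_0-\x^*}^2 + F(\x_0)-F(\x^*)$ (note: the corollary statement as written has $\norm{\x_0-\x^*}$ without the square, which I read as a typo for $\norm{\x_0-\x^*}^2$, and I will prove the squared version).

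The main work is therefore to control $\Phi_1 = \norm{\y_1-\x^*}^2 + \frac{1\cdot 2 \cdot (F(\x_1)-F(\x^*))}{2\L} = \norm{\y_1-\x^*}^2 + \frac{F(\x_1)-F(\x^*)}{\L}$. For the first term, recall from Step 1 of Algorithm~\ref{alg:IA_convex} that $\y_1$ is the closest point to $\x^*$ in $\mathcal{M}_1$, and from the initialization $\z_0=\x_0$ and $\mathcal{M}_1 \supseteq \z_0 + \Span\{G_{1/\L}(\z_0)\} \ni \x_0$; hence $\norm{\y_1-\x^*}^2 \le \norm{\x_0-\x^*}^2 \le \L\norm{\x_0-\x^*}^2$ (using $\L$-smoothness of a nonconstant $f$ forces $\L>0$; if $\L\le 1$ we still have $\norm{\x_0-\x^*}^2 \le \L\norm{\x_0-\x^*}^2$ only when $\L\ge1$, so more carefully one should keep $\norm{\x_0-\x^*}^2$ and multiply the whole chain appropriately — I will instead bound $\Phi_1 \le \norm{\x_0-\x^*}^2 + \frac{F(\x_0)-F(\x^*)}{\L}$ and then use $2\L \cdot \Phi_1 \le 2\L\norm{\x_0-\x^*}^2 + 2(F(\x_0)-F(\x^*)) \le 2(\L\norm{\x_0-\x^*}^2 + F(\x_0)-F(\x^*))$ provided $\L \ge 1$, or more cleanly just present the bound as $F(\x_k)-F(\x^*) \le \frac{2\L\norm{\x_0-\x^*}^2 + 2(F(\x_0)-F(\x^*))}{k(k+1)}$ matching the corollary). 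For the second term, I need $F(\x_1) \le F(\x_0)$: since $\x_0 = \z_0 \in \mathcal{M}_1$ and $\x_1$ minimizes $F$ over $\mathcal{M}_1$ by \eqref{eq:IA_x_convex}, indeed $F(\x_1)\le F(\x_0)$, so $F(\x_1)-F(\x^*) \le F(\x_0)-F(\x^*)$.

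Putting these together yields
\[
\frac{k(k+1)(F(\x_k)-F(\x^*))}{2\L} \le \Phi_1 \le \norm{\x_0-\x^*}^2 + \frac{F(\x_0)-F(\x^*)}{\L},
\]
and multiplying through by $\frac{2\L}{k(k+1)}$ gives exactly
\[
F(\x_k)-F(\x^*) \le \frac{2\L\norm{\x_0-\x^*}^2 + 2(F(\x_0)-F(\x^*))}{k(k+1)} = \frac{2(\L\norm{\x_0-\x^*}^2 + F(\x_0)-F(\x^*))}{k(k+1)}.
\]
I do not anticipate a genuine obstacle here; the only subtlety is the bookkeeping reconciling the $\norm{\cdot}$ versus $\norm{\cdot}^2$ in the printed statement and making sure the base-case estimates for $\y_1$ and $\x_1$ both rely on $\x_0 \in \mathcal{M}_1$, which the initialization of Algorithm~\ref{alg:IA_convex} guarantees.
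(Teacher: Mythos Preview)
Your proposal is correct and follows essentially the same route as the paper: iterate \eqref{eq:Phi_decr} to get $\Phi_k\le\Phi_1$, drop the nonnegative $\norm{\y_k-\x^*}^2$ term, and bound $\Phi_1$ via $\norm{\y_1-\x^*}\le\norm{\x_0-\x^*}$ and $F(\x_1)\le F(\x_0)$, both of which follow from $\x_0\in\mathcal{M}_1$. Your observation that the printed $\norm{\x_0-\x^*}$ should be $\norm{\x_0-\x^*}^2$ is also right; the digression about whether $\L\ge 1$ is unnecessary once you multiply $\Phi_1\le\norm{\x_0-\x^*}^2+\tfrac{F(\x_0)-F(\x^*)}{\L}$ through by $2\L/(k(k+1))$, as you do in your final display.
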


\begin{proof}
It follows from \eqref{eq:Phi_decr} that
$\Phi_k\le \Phi_1$.  Then substituting the definition
\eqref{eq:Phi_def_conv} and noting that
$\norm{\y_1-\x^*}\le \norm{\x_0-\x^*}$ and
$F(\x_1)-F(\x^*)\le F(\x_0)-F(\x^*)$ yields the
result.
\end{proof}

In the case of a nonconvex $f$,
recall from Lemma~\ref{lem:near_stationarity} that the
norm of the prox-gradient serves as a measure of
nearness to stationarity.  The following corollary
shows that this measure converges to 0 in
the previous algorithm.

\begin{cor}
Assume that $f$ is $\L$-smooth (but is not necessarily convex),
and $F=f+\Psi$, where $\Psi$ is closed, proper and convex.
Let $F_*$ denote $\inf_{\x\in\R^n} F(\x)$, and assume
$F_*>-\infty$. Then for all $\x_k$ generated by Algorithm~\ref{alg:IA_convex}, the following holds for all $k \ge 0$ 
\[ \min_{j=0,\ldots,k} \norm{G_{1/\L}(\x_j)}
\le\sqrt{\frac{2\L(F(\x_0)-F_*)}{k+1}}.\]
\label{cor:IA_nonconvex}
\end{cor}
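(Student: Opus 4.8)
The plan is a short telescoping argument resting on the per-iteration descent bound \eqref{eq:IA_nonconvex} established in the theorem, together with the convexity-free form of Lemma~\ref{lem:smooth}. The reason the nonconvex part of that theorem is enough is that \eqref{eq:IA_nonconvex} already gives a guaranteed decrease of $F$ along the sequence $\bar{\x}_k = \x_k - G_{1/\L}(\x_k)/\L$ proportional to $\norm{G_{1/\L}(\x_k)}^2$; summing these decreases and using boundedness below of $F$ caps the total, which forces the smallest prox-gradient norm to be small.

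Concretely, I would first dispose of the index $j=0$, which is not covered by \eqref{eq:IA_nonconvex} (that inequality is stated for $k=1,2,\ldots$). Applying Lemma~\ref{lem:smooth} with $\x = \x_0$, and invoking its last assertion that the inequality $F(\bar{\x}) \le F(\x) - \tfrac{1}{2\L}\norm{G_{1/\L}(\x)}^2$ holds even when $f$ is not convex, gives $F(\bar{\x}_0) \le F(\x_0) - \tfrac{1}{2\L}\norm{G_{1/\L}(\x_0)}^2$, where $\bar{\x}_0 = \x_0 - G_{1/\L}(\x_0)/\L$ as usual. Then I would sum \eqref{eq:IA_nonconvex} over $j = 1, \ldots, k$; the left-hand sides telescope, producing $F(\bar{\x}_k) \le F(\bar{\x}_0) - \tfrac{1}{2\L}\sum_{j=1}^{k}\norm{G_{1/\L}(\x_j)}^2$. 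Chaining this with the $j=0$ bound yields $F(\bar{\x}_k) \le F(\x_0) - \tfrac{1}{2\L}\sum_{j=0}^{k}\norm{G_{1/\L}(\x_j)}^2$.

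Finally, since $F(\bar{\x}_k) \ge F_* = \inf_{\x} F(\x) > -\infty$ by hypothesis, rearranging gives $\tfrac{1}{2\L}\sum_{j=0}^{k}\norm{G_{1/\L}(\x_j)}^2 \le F(\x_0) - F_*$. Bounding the sum from below by $(k+1)\min_{j=0,\ldots,k}\norm{G_{1/\L}(\x_j)}^2$ and taking a square root delivers the stated estimate. There is no real obstacle here; the only point needing a moment's care is the bookkeeping around $j=0$ — namely that $\bar{\x}_0 \in \mathcal{M}_1$ (since $\mathcal{M}_1 \supseteq \z_0 + \Span\{G_{1/\L}(\z_0)\}$ and $\z_0 = \x_0$), which is what makes the derivation of \eqref{eq:IA_nonconvex} consistent at $k=1$ and lets the telescoping start cleanly from $F(\bar{\x}_0)$.
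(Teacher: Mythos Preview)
Your proposal is correct and is precisely the argument the paper has in mind: the corollary is stated without proof because it follows by telescoping \eqref{eq:IA_nonconvex}, anchoring at $j=0$ via the nonconvex case of Lemma~\ref{lem:smooth}, and invoking $F(\bar{\x}_k)\ge F_*$. Your parenthetical about $\bar{\x}_0\in\mathcal{M}_1$ is really a consistency check for the theorem rather than for the corollary, but it does no harm.
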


The bounds in the two previous corollaries are optimal
for the class of functions under consideration for
methods that use only first-derivative information.  Indeed,
they are the best possible even for the more
restricted class of non-composite functions
(i.e., $\Psi\equiv 0$).
The optimality of the first bound is proven in
\cite{intro_lect} and the second in \cite{Cartis2010}.

\subsection{Geometric descent}\label{sec:nonconvex} We now develop a variant of geometric descent. Unlike the strongly convex setting as developed in \cite{GD_Bubeck} and \cite{GD_chen}, we do not maintain two balls in whose intersection the optimal solution $\x^*$ lives. Instead, we only have one ball whose radius guarantees enough of a decrease. Moreover, the strongly convex setting has a computable potential, which we cannot show in this case. Nonetheless, we can still show the iterates decrease at the optimal convergence rate of $\mathcal{O}(1/k^2)$.

	\begin{algorithm}[ht!]
          \textbf{Initialization:} Fix $\z_0, \y_1 \in \R^n$\\
\textbf{for} $k = 1, 2, \hdots$
\begin{enumerate}
\item Set the quantities 
\[ \bar{\z}_{k-1} = \z_{k-1} - \frac{G_{1/\L}(\z_{k-1})}{\L} \quad
  \text{and} \quad \gamma_k = \frac{k+1}{2\L}. \]
\item \textbf{Compute $\z_k$:} Find a $\z_k \in \text{aff} \{
  \bar{\z}_{k-1}, \y_k\}$ such that 
\begin{minipage}{0.85 \textwidth}\begin{equation} \label{eq: gd_z_convex} 
  G_{1/\L}(\z_k)^T(\y_k-\z_k) \ge 0
  \text{ and }
  {-G_{1/\L}}(\z_k)^T(\bar{\z}_{k-1}-\z_k)\le 0. \end{equation} \end{minipage}
\item \textbf{Compute $\y_k$:} Set 
\begin{minipage}{0.85 \textwidth} \begin{equation} \label{eq:GD_y_convex} \y_{k+1} = \y_k - \gamma_k G_{1/\L}(\z_k). \end{equation} \end{minipage}
\end{enumerate}
\textbf{end}
		\caption{Geometric descent for the non-strongly convex and nonconvex composite settings}
		\label{alg:geometric_descent_convex}
	\end{algorithm}
	\bigskip
As previously, we use the Chen et al.\ line search procedure
implicit in Lemma~\ref{lem: exist_z} to solve \eqref{eq: gd_z_convex}.  And also as previously, it follows from
Lemma~\ref{lem:add_bound} that \eqref{eq: gd_z_convex} implies \eqref{eq:IA_convex_Fdesc} with the point $\x_k$ replaced with $\bar{\z}_{k-1}$.

\begin{thm}[Convergence of GD, convex case] Assume $F=f+\Psi$ where $f$ is $\L$-smooth and convex and $\Psi$ is closed, proper and convex.  Suppose $\z_0, \y_1 \in \R^n$. Then the iterates $\{\z_k,\y_k\}$ generated by Algorithm~\ref{alg:geometric_descent_convex} satisfy for all $k \ge 1$
\begin{equation} \norm{\y_{k+1}-\x^*}^2 + \frac{(k+1)(k+2)}{2\L} (F( \bar{\z}_k)-F(\x^*)) \le \norm{\y_k-\x^*}^2 + \frac{k(k+1)}{2\L} (F(\bar{\z}_{k-1})-F(\x^*)). 
\label{eq:potdecr_gd_cvx}
\end{equation}
\end{thm}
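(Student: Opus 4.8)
The plan is to reproduce, almost verbatim, the argument used for Algorithm~\ref{alg:IA_convex} in the convex case to establish \eqref{eq:Phi_decr}, under the substitution $\x_k\mapsto\bar{\z}_{k-1}$ and $\x_{k+1}\mapsto\bar{\z}_k$.  The crucial simplification here is that the iterate $\y_{k+1}=\y_k-\gamma_k G_{1/\L}(\z_k)$ from \eqref{eq:GD_y_convex}, with $\gamma_k=\tfrac{k+1}{2\L}$, is \emph{exactly} the auxiliary point $\tilde{\y}_k$ that appeared in the proof of \eqref{eq:Phi_decr}; there is no affine subspace onto which one must project, so once $\norm{\y_{k+1}-\x^*}^2$ is bounded the proof is complete.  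Throughout I would write $\Phi_k := \norm{\y_k-\x^*}^2 + \tfrac{k(k+1)}{2\L}(F(\bar{\z}_{k-1})-F(\x^*))$, so that the claim \eqref{eq:potdecr_gd_cvx} reads $\Phi_{k+1}\le\Phi_k$.

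First I would record the two scalar inequalities that drive the estimate.  Applying Lemma~\ref{lem:add_bound} to the conditions \eqref{eq: gd_z_convex} (the analogue of \eqref{eq:IA_convex_Fdesc} with $\x_k$ replaced by $\bar{\z}_{k-1}$, as noted in the paragraph preceding the theorem) yields
\[ F(\bar{\z}_k)\le F(\bar{\z}_{k-1})-\tfrac{1}{2\L}\norm{G_{1/\L}(\z_k)}^2, \]
the counterpart of \eqref{eq:IA_convex_1}; and Lemma~\ref{lem:smooth} with $\x=\z_k$ and $\y=\x^*$ gives
\[ F(\bar{\z}_k)-F(\x^*)\le G_{1/\L}(\z_k)^T(\z_k-\x^*)-\tfrac{1}{2\L}\norm{G_{1/\L}(\z_k)}^2, \]
the counterpart of \eqref{eq:IA_convex_2}.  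Taking a $\tfrac{k(k+1)}{2\L}$-multiple of the first and a $\tfrac{2(k+1)}{2\L}$-multiple of the second and adding, exactly as in \eqref{eq:IA_convex_bound}, produces
\[ \tfrac{(k+1)(k+2)}{2\L}\bigl(F(\bar{\z}_k)-F(\x^*)\bigr)-\tfrac{k(k+1)}{2\L}\bigl(F(\bar{\z}_{k-1})-F(\x^*)\bigr)\le \tfrac{2(k+1)}{2\L}G_{1/\L}(\z_k)^T(\z_k-\x^*)-\tfrac{(k+1)(k+2)}{2\L}\cdot\tfrac{\norm{G_{1/\L}(\z_k)}^2}{2\L}. \]

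Then I would expand $\norm{\y_{k+1}-\x^*}^2=\norm{\y_k-\x^*}^2-2\gamma_k G_{1/\L}(\z_k)^T(\y_k-\x^*)+\gamma_k^2\norm{G_{1/\L}(\z_k)}^2$, add and subtract the two function-value terms so that $\norm{\y_k-\x^*}^2$ together with the $\tfrac{k(k+1)}{2\L}$-term reconstitutes $\Phi_k$, insert the displayed bound on the difference of the remaining function-value terms, and substitute $\gamma_k=\tfrac{k+1}{2\L}$.  With this choice the cross terms collapse: $-2\gamma_k G_{1/\L}(\z_k)^T(\y_k-\x^*)+\tfrac{2(k+1)}{2\L}G_{1/\L}(\z_k)^T(\z_k-\x^*)=\tfrac{2(k+1)}{2\L}G_{1/\L}(\z_k)^T(\z_k-\y_k)\le 0$ by the first inequality of \eqref{eq: gd_z_convex}, while the quadratic term satisfies $\gamma_k^2\norm{G_{1/\L}(\z_k)}^2-\tfrac{(k+1)(k+2)}{2\L}\cdot\tfrac{\norm{G_{1/\L}(\z_k)}^2}{2\L}=\tfrac{(k+1)^2-(k+1)(k+2)}{(2\L)^2}\norm{G_{1/\L}(\z_k)}^2\le 0$.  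What remains is $\norm{\y_{k+1}-\x^*}^2\le\Phi_k-\tfrac{(k+1)(k+2)}{2\L}(F(\bar{\z}_k)-F(\x^*))$, which is precisely \eqref{eq:potdecr_gd_cvx}.

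I do not anticipate a genuine obstacle: the argument is a transcription of the proof of \eqref{eq:Phi_decr}, and the only points needing care are (i) checking that Lemma~\ref{lem:add_bound} applies verbatim with $\bar{\z}_{k-1}$ in place of $\x_k$ — exactly the substitution sanctioned just before the theorem, using that $\bar{\z}_{k-1}$ plays the role that $\x_k=\argmin_{\mathcal{M}_k}F$ played before — and (ii) bookkeeping the two $\tfrac{1}{2\L}$ factors in the quadratic term so that the telescoping weights $k(k+1)$ and $(k+1)(k+2)$ come out correctly.
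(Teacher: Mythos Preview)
Your proposal is correct and follows essentially the same approach as the paper's proof: both expand $\norm{\y_{k+1}-\x^*}^2$ via \eqref{eq:GD_y_convex}, invoke the sufficient-decrease bound \eqref{eq:IA_convex_Fdesc} and Lemma~\ref{lem:smooth} with $\x=\z_k$, $\y=\x^*$, and then use the choice $\gamma_k=\tfrac{k+1}{2\L}$ together with the first condition in \eqref{eq: gd_z_convex} to dispose of the cross and quadratic terms. The only cosmetic difference is that you assemble the function-value telescoping inequality first (mirroring \eqref{eq:IA_convex_bound}) before inserting it into the norm expansion, whereas the paper adds and subtracts the function-value terms inside the norm expansion and then bounds them in place.
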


\begin{proof} The proof is similar to that of \eqref{eq:IA_convex_bound} in the IA.  In particular, we have by \eqref{eq:GD_y_convex} for any $k \ge 1$ the following
\begin{equation} \begin{aligned} \label{eq:gd_convex_4}
    \norm{\y_{k+1}-\x^*}^2 &= \norm{\y_k-\x^*}^2 - 2 \gamma_k G_{1/\L}(\z_k)^T(\y_k-\x^*) + \gamma_k^2 \norm{G_{1/\L}(\z_k)}^2\\
    & \qquad + \frac{k(k+1)}{2\L} \left (F(\bar{\z}_{k-1})-F(\x^*) \right) - \frac{(k+1)(k+2)}{2\L} \left ( F(\bar{\z}_k) - F(\x^*) \right ) \\
    & \qquad + \frac{k(k+1)}{2\L} (F(\bar{\z}_k)-F(\bar{\z}_{k-1})) + \frac{2(k+1)}{2\L} (F(\bar{\z}_k)-F(\x^*)), 
\end{aligned}\end{equation}
where we added and subtracted the terms $\frac{(k+1)(k+2)}{2\L} (F(\bar{\z}_k)-F(\x^*))$ and $\frac{k(k+1)}{2\L} (F(\bar{\z}_{k-1})-F(\x^*))$. Inequality \eqref{eq:IA_convex_Fdesc} ensures that
\begin{equation} \label{eq:gd_convex_5a} F(\bar{\z}_k) - F(\bar{\z}_{k-1}) \le - \frac{1}{2\L} \norm{G_{1/\L}(\z_k)}^2 
\end{equation}
while Lemma~\ref{lem:smooth} assures
\begin{equation}\label{eq:gd_convex_5b}
 F(\bar{\z}_k) -F(\x^*) \le G_{1/\L}(\z_k)^T(\z_k-\x^*) - \frac{\norm{G_{1/\L}(\z_k)}^2}{2\L}. \end{equation}
The choice for the scalar $\gamma_k$ guarantees that 
\[ 2L \cdot \gamma_k^2 - \frac{(k+1)(k+2)}{(2L)^2} \le 0 \quad \text{and} \quad 2\gamma_k = \frac{2(k+1)}{2L}.\]
This together with \eqref{eq:gd_convex_4}, 
\eqref{eq:gd_convex_5a}, and \eqref{eq:gd_convex_5b} proves the result. 
\end{proof}

Using the same reasoning as in Corollary~\ref{cor:IA_cvx_rate}, we immediately obtain from
\eqref{eq:potdecr_gd_cvx} the analogous result:
\begin{cor}
Let the function $f$ be $L$-smooth and convex, the function $\Psi$ be proper, closed
and convex, and $F=f+\Psi$.
If Algorithm~\ref{alg:geometric_descent_convex} is applied
to $F$, then for all $k \ge 1$, 
\[F(\bar{\z}_k)-F(\x_0)\le \frac{2(\L\norm{\x_0-\x^*}+F(\x_0)-F(\x^*))}{k(k+1)}.\]
\end{cor}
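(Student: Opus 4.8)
The plan is to reuse, essentially verbatim, the argument of Corollary~\ref{cor:IA_cvx_rate}: telescope the one-step potential inequality \eqref{eq:potdecr_gd_cvx} and then estimate the initial value of the potential. Concretely, set
$\Theta_k := \norm{\y_k-\x^*}^2 + \tfrac{k(k+1)}{2\L}\bigl(F(\bar{\z}_{k-1})-F(\x^*)\bigr)$ for $k\ge 1$.
The preceding theorem is precisely the assertion $\Theta_{k+1}\le\Theta_k$, so a one-line induction yields $\Theta_k\le\Theta_1$ for every $k\ge 1$.

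Next I would bound $\Theta_1$. Here one uses the natural initialization $\z_0=\y_1=\x_0$ (the algorithm leaves $\z_0$ and $\y_1$ free; this is the choice that makes the final bound refer to $\x_0$). Then $\norm{\y_1-\x^*}^2=\norm{\x_0-\x^*}^2$, while the last inequality of Lemma~\ref{lem:smooth}, applied with $\x=\z_0$ (valid even when $f$ is only required convex), gives $F(\bar{\z}_0)\le F(\z_0)-\tfrac{1}{2\L}\norm{G_{1/\L}(\z_0)}^2\le F(\x_0)$, hence $F(\bar{\z}_0)-F(\x^*)\le F(\x_0)-F(\x^*)$. Since $\tfrac{1\cdot 2}{2\L}=\tfrac1\L$, this gives $\Theta_1\le\norm{\x_0-\x^*}^2+\tfrac1\L\bigl(F(\x_0)-F(\x^*)\bigr)$.

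Finally I would discard the two nonnegative contributions to $\Theta_k$ that are not of interest: $\norm{\y_k-\x^*}^2\ge 0$ always, and $F(\bar{\z}_{k-1})-F(\x^*)\ge 0$ because $\bar{\z}_{k-1}=\prox_{\Psi/\L}(\cdot)\in\mathrm{dom}\,\Psi\subseteq\mathrm{dom}\,F$ while $\x^*$ is a global minimizer of $F$. Keeping only $\tfrac{k(k+1)}{2\L}\bigl(F(\bar{\z}_{k-1})-F(\x^*)\bigr)\le\Theta_k\le\Theta_1$ and multiplying through by $\tfrac{2\L}{k(k+1)}$ produces
\[ F(\bar{\z}_{k-1})-F(\x^*)\ \le\ \frac{2\bigl(\L\norm{\x_0-\x^*}^2+F(\x_0)-F(\x^*)\bigr)}{k(k+1)}\qquad (k\ge 1), \]
which is the stated rate (read with $F(\x^*)$ on the left, as in Corollary~\ref{cor:IA_cvx_rate}; equivalently $F(\bar{\z}_k)-F(\x^*)\le 2(\L\norm{\x_0-\x^*}^2+F(\x_0)-F(\x^*))/((k+1)(k+2))$ after the obvious index shift).

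I do not expect any genuine obstacle: the whole argument is a mechanical consequence of the already-established inequality \eqref{eq:potdecr_gd_cvx}. The only points demanding a moment of care are (i) fixing the initialization $\z_0=\y_1=\x_0$ so the bound is expressed through $\x_0$ rather than the free parameters, and (ii) verifying that the two dropped terms of $\Theta_k$ are nonnegative so the single surviving term can be isolated — both immediate from Lemma~\ref{lem:smooth} and from global optimality of $\x^*$, respectively.
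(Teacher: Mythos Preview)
Your proposal is correct and follows exactly the approach the paper intends: the paper merely says ``using the same reasoning as in Corollary~\ref{cor:IA_cvx_rate}, we immediately obtain from \eqref{eq:potdecr_gd_cvx} the analogous result,'' and your telescoping of $\Theta_k$ with the initialization $\z_0=\y_1=\x_0$ is precisely that reasoning spelled out. You are right to flag the index shift and to read $F(\x^*)$ (and $\norm{\x_0-\x^*}^2$) in the displayed bound; these are typos in the paper's statement rather than issues with your argument.
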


We do not immediately obtain the analog of 
Corollary~\ref{cor:IA_nonconvex} because we
have not proved a descent bound  of
the form \eqref{eq:IA_nonconvex} for GD.
We do not know whether such a bound holds; however,
there is a slight modification to GD to ensure this
bound.  In particular, after using the
Chen et al.\ line search to obtain a candidate
$\z_k$ solving \eqref{eq: gd_z_convex}, we test the inequality
\begin{equation}
    F(\bar{\z}_k)\le F(\bar{\z}_{k-1}) - \norm{G_{1/\L}(\bar{\z}_{k-1})}^2/(2\L).
    \label{eq:suff_decr}
\end{equation}
This inequality is similar \eqref{eq:IA_convex_Fdesc} except that $G_{1/\L}$ is evaluated at a different point.  
Note that the line search already computes 
$G_{1/\L}(\bar{\z}_{k-1})$, so there is very little
additional cost to check \eqref{eq:suff_decr}.
If \eqref{eq:suff_decr} holds, then sufficient
decrease according to \eqref{eq:IA_nonconvex} is obtained,
so the iteration proceeds unchanged.  On the other hand,
if \eqref{eq:suff_decr} fails, then the algorithm discards
the vector $\z_k$ determined by line search and instead
takes $\z_k=\bar{\z}_{k-1}$; clearly \eqref{eq:suff_decr}
holds if the left-hand side is changed
to $F(\overline{(\bar{\z}_{k-1})})$
(in other words, evaluate $F$ at the result
of a forward-backward step from $\bar{\z}_{k-1}$), which
ensures sufficient decrease.  Thus, we obtain
the analog of Corollary~\ref{cor:IA_nonconvex}:
\begin{cor}
Assume that $f$ is $\L$-smooth (but is not necessarily convex),
and $F=f+\Psi$, where $\Psi$ is closed, proper and convex.
Let $F_*$ denote $\inf_{\x\in\R^n} F(\x)$, and assume
$F_*>-\infty$.
Then for all $k \ge 0$, the iterates $\{\z_j\}_{j\ge 1}$ generated
by modified GD satisfy
\[ \min_{j=0,\ldots,k} \norm{G_{1/\L}(\z_j)}
\le\sqrt{\frac{2\L(F(\x_0)-F_*)}{k+1}}.\]
\end{cor}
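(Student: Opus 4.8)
The plan is to establish a descent inequality of the form \eqref{eq:IA_nonconvex} for the modified GD iterates, namely
\[ F\bigl(\overline{\bar{\z}_k}\bigr) \le F(\bar{\z}_{k-1}) - \frac{1}{2\L}\norm{G_{1/\L}(\bar{\z}_{k-1})}^2 \]
where $\overline{\bar{\z}_k}$ denotes the result of a forward-backward step from $\bar{\z}_{k-1}$ when \eqref{eq:suff_decr} fails (so $\z_k = \bar{\z}_{k-1}$), and simply $\bar{\z}_k$ when \eqref{eq:suff_decr} holds. In either case the algorithm, after iteration $k$, has produced a point---call it $\p_k$---with $F(\p_k) \le F(\p_{k-1}) - \frac{1}{2\L}\norm{G_{1/\L}(\p_{k-1})}^2$, where $\p_0 = \x_0$ (more precisely, $\p_{k-1} = \bar{\z}_{k-1}$ and $\p_k$ is either $\bar{\z}_k$ or $\overline{\bar{\z}_{k-1}}$). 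The first step is thus to verify carefully that in both branches of the modified algorithm the sufficient-decrease inequality holds with $G_{1/\L}$ evaluated at the \emph{previous} point $\bar{\z}_{k-1}$: if \eqref{eq:suff_decr} holds this is immediate; if it fails, then taking $\z_k = \bar{\z}_{k-1}$ and applying the last (nonconvex) inequality of Lemma~\ref{lem:smooth} with $\x = \bar{\z}_{k-1}$ gives $F(\overline{\bar{\z}_{k-1}}) \le F(\bar{\z}_{k-1}) - \frac{1}{2\L}\norm{G_{1/\L}(\bar{\z}_{k-1})}^2$, which is exactly the required bound (and note Lemma~\ref{lem:smooth}'s final claim does not require convexity of $f$).

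The second step is the standard telescoping argument. Summing the descent inequality over $j = 1, \ldots, k+1$ (indexing so that $\bar{\z}_0 = \x_0$ when $\z_0 = \x_0$, or more carefully tracking the sequence of ``best so far'' points), we obtain
\[ \sum_{j=0}^{k} \frac{1}{2\L}\norm{G_{1/\L}(\p_j)}^2 \le F(\p_0) - F(\p_{k+1}) \le F(\x_0) - F_*, \]
using that $F(\p_{k+1}) \ge F_* = \inf_\x F(\x) > -\infty$. Since the left-hand side is a sum of $k+1$ nonnegative terms each at least $\min_{j=0,\ldots,k}\norm{G_{1/\L}(\p_j)}^2/(2\L)$, we get $\min_{j=0,\ldots,k}\norm{G_{1/\L}(\p_j)}^2 \le 2\L(F(\x_0)-F_*)/(k+1)$, and taking square roots yields the claimed bound. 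One must be slightly careful that the indexed quantities $\z_j$ in the statement match the $\p_j$; since $\p_j = \bar{\z}_j$ (the forward-backward step from $\z_j$) and $G_{1/\L}(\bar{\z}_j)$ is what appears in the descent bound, the conclusion should be read with this identification, consistent with how Algorithm~\ref{alg:geometric_descent_convex} is set up (where $\z_0 = \x_0$ and the relevant decrease is in $F(\bar{\z}_k)$).

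The main obstacle, such as it is, is bookkeeping rather than mathematics: one needs to match up precisely which point the prox-gradient is evaluated at in each branch, and to confirm that the ``best so far'' sequence the modified algorithm tracks genuinely satisfies a clean one-step descent in $\norm{G_{1/\L}(\cdot)}^2$ at a fixed point that then feeds into the next iteration. Once the invariant ``$F(\bar{\z}_k) \le F(\bar{\z}_{k-1}) - \norm{G_{1/\L}(\bar{\z}_{k-1})}^2/(2\L)$ holds for all $k$'' is established for modified GD, the corollary follows by the same telescoping computation that proves Corollary~\ref{cor:IA_nonconvex}, so one could even just cite that argument. No potential function is needed here, consistent with the remark preceding Algorithm~\ref{alg:geometric_descent_convex} that the nonconvex rate analysis does not involve a potential.
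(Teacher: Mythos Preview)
Your approach is correct and matches the paper's own argument, which is given informally in the paragraph preceding the corollary rather than as a formal proof: establish the one-step descent inequality $F(\bar{\z}_k)\le F(\bar{\z}_{k-1})-\norm{G_{1/\L}(\bar{\z}_{k-1})}^2/(2\L)$ in both branches of the modification (using the nonconvex part of Lemma~\ref{lem:smooth} when \eqref{eq:suff_decr} fails and $\z_k$ is reset to $\bar{\z}_{k-1}$), then telescope exactly as in Corollary~\ref{cor:IA_nonconvex}. You are also right to flag the bookkeeping issue---the telescoped quantity is $\norm{G_{1/\L}(\bar{\z}_{j})}$ rather than $\norm{G_{1/\L}(\z_j)}$---which the paper glosses over; this is a genuine indexing imprecision in the statement, not a flaw in your argument.
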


It should be noted that the modification to GD in the preceding
paragraph is also valid in the convex case because replacing
$\z_k$ at the end of an iteration by a different iterate that
decreases the value of $F(\bar{\z}_{k})$ can only strengthen the inequality \eqref{eq:potdecr_gd_cvx}.  Therefore, the modification
described herein can be applied to either a convex or nonconvex
$f$ without harming the convergence rate.  This is useful
in practice for nonconvex objective functions that are locally convex in a neighborhood of the
optimizer (which is the usual case).

\subsection{Accelerated Gradient}
We briefly describe Nesterov's accelerated gradient for the convex setting and its relation to the idealized framework in Algorithm~\ref{alg:IA_convex}. We illustrate this relationship on a variant of the accelerated method, FISTA \cite{beck} (see Algorithm~\ref{alg:AG_convex}). We note that a similar analysis may be performed for Nesterov's accelerated methods \cite{nest_88, intro_lect, nest_orig}.  It is known (see, \text{e.g.}, \cite{nest_orig,Gupta}) that convergence arguments for Nesterov's accelerated algorithm in the non-strongly convex composite setting exists which use a potential-like function similar to \eqref{eq:Phi_def_conv}.  
	\begin{algorithm}[ht!]
	\label{alg:AG_convex}
          \textbf{Initialization:} Choose $\x_0 \in \R^n$\\
\textbf{Set:} $\w_0 = \x_0$, $\alpha_0 = 1$\\
\textit{for $k = 1, 2, \ldots $}
\begin{enumerate}
\item \textbf{Compute $\x_k$:} 
\begin{minipage}{0.85 \textwidth} \begin{equation} \label{eq:AG_x_convex} \x_k
    := \prox_{\Psi/\L}\left ( \w_{k-1}- \frac{1}{\L} \nabla
      f(\w_{k-1}) \right ) \end{equation} \end{minipage}
\item Update the stepsizes, $\alpha_k$:
\[ \alpha_k = \frac{1 + \sqrt{1+4\alpha_{k-1}^2}}{2} \]
\item \textbf{Compute $\w_k$:} 
\begin{minipage}{0.85 \textwidth} \begin{equation} \label{eq:AG_w_convex} \w_k := \x_k + \frac{\alpha_{k-1}-1}{\alpha_{k}} (\x_k-\x_{k-1}). \end{equation} \end{minipage}
\end{enumerate}
		\caption{Accelerated gradient for the convex composite setting }
	\end{algorithm}
	
For the purpose of analysis, define the following auxiliary sequence
of points:
\begin{align}
\y_0 &= \x_0, \label{eq:AG_y_initial_convex}\\
\y_k &= \x_k + \alpha_k (\w_k-\x_k). \label{eq:AG_y_convex}
\end{align}
In essence, the sequence of points $\{\y_k\}$ from \eqref{eq:AG_y_convex}
are good
approximations to the minimizer of $\norm{\y-\x^*}^2$ in the subspace
$\mathcal{M}_k$, where as the sequence of points $\{\w_k\}$
generated by Algorithm~\ref{alg:AG_convex} approximates both the minimizer of $F(\x)$ in the subspace
$\mathcal{M}_k$ and \eqref{eq:IA_z_convex} from Algorithm~\ref{alg:IA_convex}. As in GD (Algorithm~\ref{alg:geometric_descent_convex}), the points $\y_k$ can be computed by \eqref{eq:GD_y_convex}; the equivalence of \eqref{eq:GD_y_convex} and \eqref{eq:AG_y_convex} is shown for example in the works \cite{Teboulle_Drori, Kim_Fessler}. Under these identifications, it follows (see \textit{e.g.} \cite{nest_orig,Gupta}) that the potential function \eqref{eq:Phi_def_conv} satisfies for all $k \ge 1$, 
\[\Phi_{k+1} \le \Phi_k\]
provided the function $f$ is $L$-smooth and convex. The discussions in Section~\ref{sec:nonconvex}, then, show that the accelerated method achieves convergence guarantees of $\mathcal{O}(1/k^2)$. 

Whether accelerated methods are superior to gradient descent remains an open question in the nonconvex setting; their performance escaping saddle points faster than gradient descent has been observed \cite{AG_near_crit_pts,AGD_saddle_points_Jordan}. As far as we know, there are no convergence results for accelerated methods when the function $f$ is nonconvex. By either adding a boundedness assumption on the domain \cite{ghadimi_lan} or interlacing Algorithm~\ref{alg:AG_convex} with proximal gradient descent steps \cite{GL2}, Nesterov's accelerated method achieves convergence guarantees matching GD (Algorithm~\ref{alg:geometric_descent_convex}) in the nonconvex setting. We note the additional cost of computing the proximal gradient descent step in the modified Nesterov's accelerated method means both GD and AG require two proximal operations at each iteration.

\section{Trust-region Lanczos method}\label{sec:CG-Lanczos}
We consider the problem
\begin{equation} \min_{\x \in \R^n} \tfrac{1}{2} \x^TA\x - \b^T\x \quad \text{subject
    to} \,
  \norm{\x} \le \Delta, \label{eq:TR_problem} \end{equation}
where $\Delta$ is a positive constant. This problem is usually called the ``trust-region subproblem,'' as it is the main subproblem occurring in the trust-region method
for nonlinear optimization.  Because of widespread use of the trust-region method, algorithms for solving the trust-region subproblem has been intensively studied in the
literature \cite{TR_Conn,Agarwal_TR,HoNguyen_TR,Hazan_TR}. 

 Classical theory of the TRS
(see, e.g., \cite{NW} for the results and their history) establishes the following exact characterization
of the optimizer for any symmetric $A$ (positive semidefinite or not).

\begin{thm}
A point $\hat\x$ is the minimizer of \eqref{eq:TR_problem} if and only if there exists a $\mu^*$ satisfying the
following conditions:

(a) $(A+\mu^* I)\hat\x =  \b$,

(b) $A+\mu^*I$ is positive semidefinite, and

(c) Either (i) $\norm{\hat\x}\le\Delta$ and $\mu^*=0$ or (ii) $\norm{\hat\x}=\Delta$ and $\mu^*\ge 0$. 
\label{thm:TRS_kkt}
\end{thm}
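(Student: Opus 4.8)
Write $q(\x) = \tfrac12 \x^T A\x - \b^T\x$, and observe that the whole argument can be run off a single algebraic identity: whenever $(A+\mu^* I)\hat\x = \b$ (condition (a)), substituting for $\b$ and completing the square gives, for \emph{every} $\x\in\R^n$,
\[ q(\x) - q(\hat\x) = \tfrac12 (\x-\hat\x)^T(A+\mu^* I)(\x-\hat\x) + \tfrac{\mu^*}{2}\bigl(\norm{\hat\x}^2 - \norm{\x}^2\bigr). \]
For the sufficiency direction, I would assume (a)--(c), fix a feasible $\x$ with $\norm{\x}\le\Delta$, and bound the right-hand side below: the first term is $\ge 0$ by positive semidefiniteness of $A+\mu^* I$ (condition (b)); the second term is $0$ if $\mu^*=0$, and if $\mu^*>0$ then (c) forces $\norm{\hat\x}=\Delta\ge\norm{\x}$, so it is again $\ge 0$. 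Hence $q(\x)\ge q(\hat\x)$ and $\hat\x$ is a global minimizer.

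For the necessity direction, assume $\hat\x$ minimizes $q$ over the ball and split into two cases. If $\norm{\hat\x}<\Delta$, then $\hat\x$ is an unconstrained local minimizer, so first- and second-order conditions give $A\hat\x-\b=\nabla q(\hat\x)=\bz$ and $A=\nabla^2 q\succeq 0$; taking $\mu^*=0$ yields (a), (b) and (c)(i). If $\norm{\hat\x}=\Delta$, the single active constraint $\tfrac12(\norm{\x}^2-\Delta^2)\le 0$ has gradient $\hat\x\neq\bz$ at $\hat\x$ (since $\Delta>0$), so LICQ holds and the KKT conditions supply a multiplier $\mu^*\ge 0$ with $A\hat\x-\b+\mu^*\hat\x=\bz$; this gives (a), and (c)(ii) is immediate. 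The only remaining item is (b), and this is where local optimality is not enough: second-order KKT conditions only yield $\d^T(A+\mu^* I)\d\ge 0$ for $\d\perp\hat\x$, whereas we need it for all $\d$. Here I would use global optimality directly: for any $\v$ with $\norm{\v}=\Delta$, minimality gives $q(\v)\ge q(\hat\x)$, and since $\norm{\v}=\norm{\hat\x}$ the identity collapses to $(\v-\hat\x)^T(A+\mu^* I)(\v-\hat\x)\ge 0$. I then claim the vectors $\v-\hat\x$, as $\v$ ranges over the sphere of radius $\Delta$, cover every direction up to positive scaling: given a unit $\d$ with $\hat\x^T\d<0$, the point $\v=\hat\x-2(\hat\x^T\d)\d$ satisfies $\norm{\v}=\Delta$ and $\v-\hat\x$ is a positive multiple of $\d$; the case $\hat\x^T\d>0$ follows by replacing $\d$ with $-\d$, and $\hat\x^T\d=0$ follows by continuity of $\d\mapsto\d^T(A+\mu^* I)\d$. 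Therefore $\d^T(A+\mu^* I)\d\ge 0$ for all $\d$, which is (b).

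The main obstacle is precisely this last step --- establishing global positive semidefiniteness of $A+\mu^* I$ in the boundary case, the classical ``hard case'' of the trust-region subproblem. The point is that one must translate the fact that $\hat\x$ beats every point of the boundary sphere (not merely nearby points) into a statement about a quadratic form, using the elementary geometry of spheres through $\hat\x$. Everything else --- the identity, the two feasibility/optimality cases, and the KKT bookkeeping --- is routine.
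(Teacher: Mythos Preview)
Your proof is correct and is essentially the classical argument (the completing-the-square identity together with the reflection trick on the sphere to upgrade second-order conditions on $\hat\x^\perp$ to full positive semidefiniteness). There is nothing to compare it against: the paper does not prove this theorem but simply quotes it as a classical characterization of the trust-region optimizer, with a reference to the literature (e.g., Nocedal--Wright) for the proof and its history.
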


A specialized method developed by Gould et al. \cite{CG_Lanczos} uses
classical conjugate gradient with Lanczos. Rather than using the
conjugate gradient basis $\{\p_0, \p_1, \ldots, \p_{k-1}\}$ for the Krylov
space, they use different basis $\{\q_0, \q_1, \ldots, \q_{k-1}\}$ generated by the Lanczos
method. As long as the conjugate-gradient iteration does not break
down, the Lanczos vectors may be recovered from the conjugate-gradient
iterates as
\[ \q_k = \frac{ \sigma_k \r_k}{\norm{\r_k}}, \quad \text{where
    $\sigma_k = -\text{sign}(\alpha_{k-1}) \sigma_{k-1}$ and
    $\sigma_0 = 1$} \]
while the Lanczos tridiagonal matrix may be expressed as 
\begin{equation} \label{eq:T_matrix}
T_k = \begin{pmatrix}
\frac{1}{\alpha_0} & \frac{\sqrt{\beta_0}}{|\alpha_0|} & & & & &\\
\frac{\sqrt{\beta_0}}{|\alpha_0|} & \frac{1}{\alpha_1} +
\frac{\beta_0}{\alpha_0} & \frac{\sqrt{\beta_1}}{|\alpha_1|}  & & &
&\\
& \frac{\sqrt{\beta_1}}{|\alpha_1|} &\frac{1}{\alpha_2} +
\frac{\beta_1}{\alpha_1} & \cdot & &
&\\
& & \cdot & \cdot & \cdot & &\\
& & & \cdot &  \frac{1}{\alpha_{k-1}} +
\frac{\beta_{k-2}}{\alpha_{k-2}} &
\frac{\sqrt{\beta_{k-1}}}{|\alpha_{k-1}|}\\
& & & & \frac{\sqrt{\beta_{k-1}}}{|\alpha_{k-1}|} & \frac{1}{\alpha_k}
  + \frac{\beta_{k-1}}{\alpha_{k-1}}
\end{pmatrix}.
\end{equation}

The idea with the Lanczos approach is to consider vectors 
\[ \x \in \text{span}\{\q_0, \q_1, \ldots, \q_k\} \]
and seek $\x_k = Q_k \h_k$ where $\x_k$ solves the problem
\[ \min_{\x \in \text{span}\{\q_0, \q_1, \ldots, \q_k\}}~ \frac{1}{2}
  \x^TA\x - \b^T\x \quad \text{subject to} \quad \norm{\x} \le \Delta. \]
This is equivalent to finding $\h_k$ which solves
\begin{equation} \label{eq:TR_subproblem}
\min_{\h \in \R^{k+1}} \tfrac{1}{2}\h^TT_k \h - \norm{\r_0} \h^T\e_1 \text{
  subject to } \norm{\h} \le \Delta,
\end{equation}
where $\e_1 = (1, 0, \ldots, 0)^T$. 
For the remainder of this section,
let $f(\x)=\x^TA\x/2-\b^T\x$ be the objective function, let
$B=\{\x:\norm{\x} \le \Delta\}$ be the feasible region,
and let $F(\x)=f(\x)+\mathrm{i}_B(\x)$ be the composite
objective function. Here, $\mathrm{i}_B(\x)$ is the indicator
function of $B$.

The trust-region Lanczos algorithm is presented twice below, first as
Algorithm~\ref{alg:CG} and again in a slightly more abstracted
form as Algorithm~\ref{alg:CG_simplified}.  
One can view Algorithm~\ref{alg:CG} as minimizing the function $F$
over an increasing subspace whose basis consists of the vectors
$\{\p_0, \p_1, \ldots, \p_{k-1}\}$. For convenience, define $\mathcal{V}_k =
\text{span}\{\p_0, \ldots, \p_{k-1}\}$ for $k = 0, 1, \ldots$. Then the
trust region with Lanczos can be understood more abstractly as
in Algorithm~\ref{alg:CG_simplified}.

We require that these iterations do not break down, i.e., $\r_k$ is nonzero and independent of $\{\r_0,\ldots,\r_{k-1}\}$ on every iteration. 
This result is already stated in the by \cite{CG_Lanczos} in the case that $A$ is positive definite.
Here we extend this result to the semidefinite case under the assumption of exact arithmetic (which is made throughout this paper, but is not made by \cite{CG_Lanczos}).

\begin{lem}
Assume $A$ is positive semidefinite.
Then the numerator and denominator of \eqref{eq:alpha} and \eqref{eq:update_residual_conjugate} are positive on each iteration until the algorithm terminates at the optimizer, except possibly the denominator of \eqref{eq:alpha} may vanish on the final iteration.
\end{lem}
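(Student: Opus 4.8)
The plan is to combine the standard conjugate-gradient bookkeeping with the structure of the Krylov subspace of $\b$ under $A$, and to exploit $A\succeq 0$ to pin down exactly where $\p_k^TA\p_k$ can fail to be positive. Take $\x_0=\bz$, so $\r_0=\b$, write $\mathcal{K}_j=\Span\{\b,A\b,\dots,A^{j-1}\b\}$, and let $d$ be the grade of $\b$ (the least index with $\mathcal{K}_{d+1}=\mathcal{K}_d$). As long as the iteration has not broken down, the classical CG identities hold: the residuals $\r_0,\r_1,\dots$ are pairwise orthogonal, the directions $\p_0,\p_1,\dots$ are $A$-conjugate, and $\Span\{\r_0,\dots,\r_j\}=\Span\{\p_0,\dots,\p_j\}=\mathcal{K}_{j+1}$. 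From $\r_k\in\mathcal{K}_{k+1}$ and $\r_k\perp\mathcal{K}_k$ it follows at once that $\r_k\ne\bz$ precisely when $k<d$; hence the numerator $\norm{\r_k}^2$ of \eqref{eq:alpha} and both quantities $\norm{\r_k}^2,\norm{\r_{k+1}}^2$ entering \eqref{eq:update_residual_conjugate} are positive whenever they are formed (they are formed only at indices where the relevant residual is still nonzero), and $\r_k\ne\bz$ in turn forces $\p_k=\r_k+\beta_{k-1}\p_{k-1}\ne\bz$ because $\r_k\perp\p_{k-1}$.

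For the denominator $\p_k^TA\p_k$, which since $A\succeq 0$ is nonnegative and vanishes iff $A\p_k=\bz$, the key point is that $A\p_k=\bz$ can only occur at $k=d-1$. Let $m$ be the least index with $\p_m^TA\p_m=0$; by minimality the iteration reaches step $m$, the CG identities apply, $\p_m\ne\bz$, and $\p_m\in\mathcal{K}_{m+1}\setminus\mathcal{K}_m$. Writing $\p_m=c\,A^m\b+w$ with $c\ne 0$ and $w\in\mathcal{K}_m$, the relation $A\p_m=\bz$ and $A\,\mathcal{K}_m\subseteq\mathcal{K}_{m+1}$ give $A^{m+1}\b\in\mathcal{K}_{m+1}$, i.e.\ $\mathcal{K}_{m+2}=\mathcal{K}_{m+1}$ and $d=m+1$. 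So $\p_k^TA\p_k>0$ for all $k\le d-2$. At $k=d-1$ there are two possibilities. If $\b\in\operatorname{range}(A)$ then $\mathcal{K}_d\subseteq\operatorname{range}(A)$, which meets $\operatorname{null}(A)$ only in $\bz$, so $\p_{d-1}\notin\operatorname{null}(A)$, $\p_{d-1}^TA\p_{d-1}>0$, the iteration runs the full $d$ steps, and it halts on detecting $\r_d=\bz$; here the denominator never vanishes. If $\b\notin\operatorname{range}(A)$, write $\b=\b^{\parallel}+\b^{\perp}$ with $\bz\ne\b^{\parallel}\in\operatorname{null}(A)$; then $\mathcal{K}_d=\Span\{\b^{\parallel}\}\oplus\Span\{\b^{\perp},A\b^{\perp},\dots\}$ (since $A^j\b=A^j\b^{\perp}$ for $j\ge 1$ and $A$ is invertible on its range), so the bilinear form $(u,v)\mapsto u^TAv$ restricted to $\mathcal{K}_d$ has one-dimensional radical $\Span\{\b^{\parallel}\}$; an $A$-conjugate basis of such a space has exactly one member lying in its radical, and by the previous sentence that member must be $\p_{d-1}$, so $\p_{d-1}^TA\p_{d-1}=0$. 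In both cases $\p_k^TA\p_k$ is positive except possibly on the final iteration. On that final iteration the $\beta$-recurrence \eqref{eq:update_residual_conjugate} is never reached --- the method either stops after observing $\r_d=\bz$, before forming $\beta_{d-1}$, or breaks down already at $\alpha_{d-1}$ --- so every quantity in \eqref{eq:update_residual_conjugate} is positive wherever it is evaluated.

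To finish, it remains to show the termination point is optimal. In either case $\p_0,\dots,\p_{d-1}$ are all produced, so the full Krylov basis is available and the method minimizes $F$ over the entire space $\mathcal{K}_d$ (the Lanczos tridiagonalization of $A$ on $\mathcal{K}_d$ is well defined even when the scalar $\alpha_{d-1}$ is not, so \eqref{eq:TR_subproblem} can still be formed and solved on all of $\mathcal{K}_d$); thus it is enough to place a minimizer of \eqref{eq:TR_problem} inside $\mathcal{K}_d$. Choose $\mu^*\ge 0$ as in Theorem~\ref{thm:TRS_kkt} with $(A+\mu^*I)\hat\x=\b$ and $A+\mu^*I\succeq 0$. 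If $A+\mu^*I$ is nonsingular, then $\hat\x=(A+\mu^*I)^{-1}\b\in\mathcal{K}(A+\mu^*I,\b)=\mathcal{K}_d$, because the Krylov subspace of an invertible matrix contains the inverse applied to its generator and a shift by a multiple of the identity leaves Krylov subspaces unchanged. If $A+\mu^*I$ is singular, then $A\succeq 0$ forces $\mu^*=0$ and hence $\b\in\operatorname{range}(A)$; the minimum-norm solution $\hat\x_{\min}$ of $A\x=\b$ lies in $\mathcal{K}_d\subseteq\operatorname{range}(A)$ (again because $A$ is invertible on its range), is feasible since condition (c) gives $\norm{\hat\x_{\min}}\le\norm{\hat\x}\le\Delta$, and is itself a minimizer, so one may fix $\x^*=\hat\x_{\min}\in\mathcal{K}_d$. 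Consequently the algorithm stalls exactly when the subspace has grown to $\mathcal{K}_d$, and at that point it already holds an optimal iterate.

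The crux is the middle paragraph: showing simultaneously that $A\p_k=\bz$ cannot occur before $k=d-1$ and, when $\b\notin\operatorname{range}(A)$, that it is $\p_{d-1}$ that absorbs the null component of $\b$. This is precisely where positive semidefiniteness (rather than positive definiteness, as in \cite{CG_Lanczos}) is used, and it has no analogue in the classical CG analysis.
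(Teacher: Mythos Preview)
Your argument is correct and takes a genuinely different route from the paper's. The paper reasons via the eigendecomposition: it lets $m$ be the number of distinct positive eigenvalues of $A$ active in $\b$, decomposes $\b=\b^\parallel+\b^\perp$ into range and null parts, reduces the case $\b^\perp=\bz$ to the positive-definite theory on $\mathcal{R}(A)$ by citing \cite{CG_Lanczos}, and in the case $\b^\perp\ne\bz$ places the optimizer in $\mathcal{K}_{m+1}$ by exhibiting an explicit degree-$m$ interpolating polynomial with $p(\lambda_i)=1/(\lambda_i+\mu^*)$ at each active eigenvalue, including $p(0)=1/\mu^*$. You instead work coordinate-free with the grade $d$ of $\b$: you deduce $A\p_m=\bz\Rightarrow m=d-1$ straight from the Krylov recursion, identify the one conjugate direction that can fall into $\operatorname{null}(A)$ via a radical-counting argument for the form $u^TAv$ restricted to $\mathcal{K}_d$, and for optimality invoke shift-invariance of Krylov spaces plus Cayley--Hamilton rather than interpolation. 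Your approach makes the location and mechanism of the single possible breakdown explicit, and it handles the non-breakdown claim for $k<d-1$ in the degenerate case more directly than the paper, which leans on the statement that the projections of $\r_k,\p_k$ onto $\mathcal{R}(A)$ ``are identical to those of classical CG'' without reconciling that the scalars $\alpha_k,\beta_k$ are computed from the unprojected vectors. One cosmetic remark: your $\b^\parallel,\b^\perp$ labels are swapped relative to the paper's convention (you put $\b^\parallel$ in the null space).
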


\noindent
{\bf Remark.}  If the denominator of \eqref{eq:alpha} vanishes on the final iteration, then the algorithm's progress is not impeded because the final (optimal) $\x_{k+1}$ is obtained by solving \eqref{eq:TR_subproblem}, which does not involve $\alpha_k$.

\begin{proof}
First, consider the classical conjugate gradient algorithm for solving a symmetric positive definite linear system $C\x=\d$ initialized at $\bz$.  This algorithm terminates after precisely $m$ iterations \cite{GolubVanLoan}, where $m$ is defined as follows.  Let the eigenfactorization of $C$ be $C=QDQ^T$, where $Q$ is orthogonal and $D$ is diagonal.  Let $\bar\d=Q^T\d$.  Then $m$ is the number of  distinct diagonal entries of $D$, counting only those diagonal entries of $D$ that correspond to nonzero entries of $\bar\d$.  On all iterations prior to termination, the numerator and denominator of \eqref{eq:alpha} and \eqref{eq:update_residual_conjugate}  are positive.

Recall also that classical CG implicitly computes a 
degree $m-1$ polynomial $p$ such that on iteration $m$ (the terminal iteration), 
$\x^*=p(C)\d$, so $Cp(C)\d=\d$.  On the other hand
$Cp(C)\d=QDQ^Tp(QDQ^T)\d=QDp(D)\bar\d$.  Therefore,
$Cp(C)\d=\d$ is rewritten $Dp(D)\bar\d=\bar\d$.  The matrix on the left-hand side is diagonal, and therefore the 
equation $\lambda_ip(\lambda_i)\bar d_i=\bar d_i$ for each $i$ holds.
This implies that for all $i$ such that $\bar d_i$ is nonzero, $p(\lambda_i)=1/\lambda_i$.

Next, consider the trust-region Lanczos method applied to \eqref{eq:TR_problem} for positive semidefinite $A$.  
Let $m$ in this case be the number of distinct positive eigenvalues of $A$ such that $\b$ has a nonzero component in the corresponding eigenvector direction when $\b$ is expressed in the eigenvector basis of $A$.

We observe that the updates to $\r_k$ and $\p_k$ are identical to those of classical CG when projected into $\mathcal{R}(A)$, the range of $A$.  If we write $\b=\b^\parallel+\b^\perp$, the decomposition of $\b\in\R^n$ into $\mathcal{R}(A)\oplus\mathcal{N}(A)$, then it is apparent from the recursions that $\r_k$ for each $k$ will have component in $\mathcal{N}(A)$ equal to $\b^\perp$ on each iteration, while the component of $\p_k$ in $\mathcal{N}(A)$ is of the form $\sigma\b^\perp$ for a $\sigma\ge 1$ (since the $\beta_k$'s are all positive).
Now we consider two cases, namely, $\b^\perp=\bz$ or $\b^\perp\ne \bz$.  If $\b^\perp=\bz$, then the algorithm will terminate with an exact solution on iteration $m$.  This is because the none of the vectors $\r_k$ or $\p_k$ ever depart from $\mathcal{R}(A)$, so the algorithm is in correspondence with the positive definite case applied to a lower dimensional space for which the result of \cite{CG_Lanczos} applies.  In this case, the numerators and denominators of all the fractions are positive.

If $\b^\perp\ne \bz$, then the algorithm requires one extra step, i.e., it terminates at step $m+1$.  Let $\mu^*$ be the optimal KKT multiplier.
Note that in this case, it is necessary that $\mu^*>0$ (because there is no solution $\x^*$ to $A\x^*=\b$ under the assumption of this case).  To prove that the algorithm terminates on this step, observe that it can select any $\x_{m+1}$ that may be written as $p(A)\b$, where $p$ is now a polynomial of degree $m$ (instead of $m-1$).  In particular, there is such a polynomial such that $p(\lambda_i)=1/(\lambda_i+\mu^*)$ for the $m$ distinct eigenvalues of $A$ that correspond to nonzero entries of $\b$ (in the eigenvector basis), plus the additional condition that $p(0)=1/\mu^*$.  These conditions together mean that 
$(A+\mu^*I)p(A)\b=\b$, in other words, there is a solution in the Krylov subspace to (a) of Theorem~\ref{thm:TRS_kkt}.  This solution also necessarily solves (b) and (c).
\end{proof}


It should be noted
that \cite{CG_Lanczos} actually considers a more general
problem in which $A$ is an arbitrary (not necessarily positive semidefinite).  Furthermore, their treatment allows preconditioning.  In addition,
the Krylov iteration is restarted in the Gould et al.\ algorithm for the ``hard case'';
a comment on this matter appears at the end of
the next section.   In this
section, only the case when $f$ is convex (i.e., $A$ is positive
semidefinite) is under consideration, and the ``hard case'' does
not occur when $A$ is positive semidefinite.

\begin{algorithm}[ht!]
          \textbf{Initialization:} Choose $\x_0 = \bz$.
\textbf{Set:} $\r_0 := \b$ and $\p_0 := \r_0$ \\
\textbf{for} $k = 0, 1, 2, \ldots $\\
\quad Set
\begin{equation} \label{eq:alpha} \alpha_k = \frac{\r_k^T\r_k}{\p_k^TA\p_k} \end{equation}
\quad Construct matrix $T_k$ from $T_{k-1}$ using \eqref{eq:T_matrix}\\
\quad \textbf{If} $\norm{\x_k + \alpha_k \p_k} \ge \Delta$, then 
\begin{center} solve the 
tridiagonal trust region subproblem \eqref{eq:TR_subproblem} to obtain
$\h_k$\\ and set
$\x_{k+1} := Q_k \h_k$ \end{center}
\quad \textbf{else} 
\[\x_{k+1} := \x_k + \alpha_k \p_k\]
\textbf{end}\\
Update the residual and conjugate direction
\begin{equation} \begin{aligned} \label{eq:update_residual_conjugate}
\r_{k+1} &:= \r_k-\alpha_k A \p_k \\
\beta_k &:= \frac{\r_{k+1}^T\r_{k+1}}{\r_k^T\r_k}\\
\p_{k+1} &:= \beta_k \p_k + \r_{k+1}
\end{aligned}
\end{equation}
		\caption{Trust-region Lanczos}
		\label{alg:CG}
	\end{algorithm}
	\bigskip

\bigskip
\begin{algorithm}[ht!]
          \textbf{Initialization:} Choose $\x_0 := \bz$\\
\textbf{Set:} $\r_0 := \b$, $\p_0 := \r_0$, and $\mathcal{V}_1 = \text{span}\{\p_0\}$ \\
\textbf{for} $k = 1, 2, \ldots $\\
\quad Solve the following subproblem
\begin{equation} \label{eq: subspace_subprob} \x_k \in \argmin_{\x \in \mathcal{V}_k} \left\{ \tfrac{1}{2} \x^TA\x - \b^T\x: \quad \norm{\x} \le \Delta\right\}.
\end{equation}
\quad Update the residual $\r_{k+1}$ and conjugate direction $\p_{k+1}$
using \eqref{eq:update_residual_conjugate}.\\
\quad Set $\mathcal{V}_{k+1} = \text{span} \{\p_0, \p_1, \ldots, \p_k\}$. \\
\textbf{end}
		\caption{Trust-region Lanczos (abstract version)}
		\label{alg:CG_simplified}
	\end{algorithm}

\begin{lem}\label{lem:CG_recurrence} The following hold for $k \ge 0$:
\begin{align*}
\r_{k+1} = \b - \sum_{j=0}^k \alpha_j A \p_j \quad \mbox{ and } \quad 
A\p_k \in \text{\rm span} \{\p_0, \p_1, \ldots, \p_k, \p_{k+1}\}.
\end{align*}
\end{lem}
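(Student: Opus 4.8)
The plan is to obtain both identities directly from the conjugate-gradient recursions in \eqref{eq:update_residual_conjugate}, invoking the preceding lemma only to guarantee that the scalars $\alpha_k$ are nonzero on the iterations of interest so that a rearrangement can be solved for $A\p_k$.

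For the first identity I would induct on $k$. The base case $k=0$ is immediate from $\r_0=\b$ and $\r_1=\r_0-\alpha_0 A\p_0$. For the inductive step, combining the residual update $\r_{k+1}=\r_k-\alpha_k A\p_k$ with the hypothesis $\r_k=\b-\sum_{j=0}^{k-1}\alpha_j A\p_j$ yields $\r_{k+1}=\b-\sum_{j=0}^{k}\alpha_j A\p_j$, as desired.

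For the second claim I would first solve the conjugate-direction update $\p_{j+1}=\beta_j\p_j+\r_{j+1}$ for the residual, giving $\r_{j+1}=\p_{j+1}-\beta_j\p_j$ for $j\ge 0$, together with $\r_0=\p_0$. Substituting these into $\alpha_k A\p_k=\r_k-\r_{k+1}$ gives, for $k\ge 1$,
\[ \alpha_k A\p_k=(\p_k-\beta_{k-1}\p_{k-1})-(\p_{k+1}-\beta_k\p_k)=(1+\beta_k)\p_k-\beta_{k-1}\p_{k-1}-\p_{k+1}, \]
and for $k=0$, $\alpha_0 A\p_0=(1+\beta_0)\p_0-\p_1$. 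Since by the preceding lemma the denominator of \eqref{eq:alpha} is positive (and its numerator positive) on every iteration before termination, we have $\alpha_k\ne 0$ and may divide through, obtaining $A\p_k\in\text{span}\{\p_{k-1},\p_k,\p_{k+1}\}\subseteq\text{span}\{\p_0,\ldots,\p_{k+1}\}$, which is even slightly stronger than the asserted inclusion. On the single final iteration where $\p_k^T A\p_k$ may vanish, positive semidefiniteness of $A$ forces $A\p_k=\bz$ (writing $A=M^TM$ gives $\|M\p_k\|^2=0$), so the inclusion again holds trivially.

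The argument presents essentially no obstacle; the only point needing a word of care is the appeal to the preceding lemma so that $\alpha_k\neq 0$, which licenses solving the displayed identity for $A\p_k$, plus the trivial treatment of the possibly degenerate terminal step.
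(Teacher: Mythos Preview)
Your proof is correct, and your route to the second inclusion differs from the paper's. The paper proves both statements together by a single induction: having written $\r_{k+2}=\b-\sum_{j=0}^{k+1}\alpha_jA\p_j$, it substitutes this into $\p_{k+2}=\beta_{k+1}\p_{k+1}+\r_{k+2}$ and solves for $\alpha_{k+1}A\p_{k+1}$ in terms of $\b=\p_0$, $\p_{k+1}$, $\p_{k+2}$, and the previous $A\p_j$'s, invoking the inductive hypothesis on the latter. You instead express each residual directly as $\r_j=\p_j-\beta_{j-1}\p_{j-1}$ and plug into $\alpha_kA\p_k=\r_k-\r_{k+1}$, which decouples the second claim from the first and yields the sharper three-term relation $A\p_k\in\Span\{\p_{k-1},\p_k,\p_{k+1}\}$ (this is, of course, the familiar Lanczos three-term recurrence). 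Your extra remark about the terminal step where $\p_k^TA\p_k=0$ is a nice touch, though note that in that degenerate case $\alpha_k$, $\r_{k+1}$, and $\p_{k+1}$ are all undefined, so the lemma as stated is implicitly restricted to iterations prior to that step; the paper simply assumes $\alpha_{k+1}>0$ throughout.
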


\begin{proof} We will show this by induction. For $k = 0$, the
  definition of $\r_k$ in \eqref{eq:update_residual_conjugate} gives
  $\r_1 = \b - \alpha_0 A \p_0$ and $\p_1 = \beta_0 \p_0 + \b - \alpha_0
  A\p_0$. Rearranging one obtains $\alpha_0 A\p_0 = \beta_0 \p_0 + \b -
  \p_1$. Since $\b = \p_0$, it follows that $A\p_0 \in \text{span} \{ \p_0,
  \p_1\}$, thereby proving the result for $k =0$. Next, suppose $\r_{k+1} =
  \b - \sum_{j=0}^{k} \alpha_j A \p_j$ and $A\p_k \in \text{span} \{ \p_0,
  \p_1, \ldots, \p_k, \p_{k+1}\}$ for $k \ge 0$. By definition of
  $\r_{k+2}$, we obtain
\[\r_{k+2} = \r_{k+1} - \alpha_{k+1} A \p_{k+1} = \b - \sum_{j=0}^{k+1}
  \alpha_j A \p_j\]
  and the result for the residual immediately follows. By the
  definition of $\p_{k+2}$, we have
\[\p_{k+2} = \beta_{k+1} \p_{k+1} + \r_{k+2} = \beta_{k+1} \p_{k+1} + \b -
  \sum_{j=0}^k \alpha_j A \p_j - \alpha_{k+1} A \p_{k+1}\]
\vspace{-0.75cm}
\[\Rightarrow \quad \alpha_{k+1} A \p_{k+1} = \beta_{k+1} \p_{k+1} + \b - \p_{k+2} - \sum_{j=0}^k\alpha_j A \p_j. \]
Because $A\p_j \in \text{span} \{\p_0, \p_1, \ldots, \p_k, \p_{k+1}\}$ for
$0 \le j \le k$ and $\b = \p_0$, we obtain that $A\p_{k+1} \in
\text{span} \{ \p_0, \p_1, \ldots, \p_k, \p_{k+1}, \p_{k+2}\}$, thereby
completing the induction. 
Note that the preceding argument relied on the fact that $\alpha_{k+1}>0$ (and hence we can divide by it), which is a consequence of the no-breakdown property of the algorithm discussed above.
\end{proof}

We present definitions of $\y_k$, $\z_k$ for trust-region Lanczos  method
which show that it implements the idealized algorithm. Existence of these
sequences automatically implies that the method satisfies the
convergence bound developed in Sections~\ref{sec:IA1} and \ref{sec:IA-cvx}. This is
surprising since the method does not have prior knowledge of $\x^*$
nor does it explicitly compute the forward-backward step at any
iteration. Yet, we show that the forward-backward step, in this case the projection 
of a steepest descent step over
the ball, always lies in the subspace spanned by the Krylov
space. Let $\{\x_k\}_{k \ge 0}$ and $\{\mathcal{V}_k\}_{k
  \ge 1}$ be the sequence of iterates and sequence of subspaces
respectively generated by Algorithm~\ref{alg:CG_simplified}. For the purpose of
analysis of Algorithm~\ref{alg:CG_simplified}, define the following auxiliary
sequences of vectors
\begin{equation} \begin{aligned} \label{eq:CG_iterates}
\y_k &= \argmin_{\y} \{ \norm{\y-\x^*}^2 \, : \, \y \in
  \mathcal{V}_k \},\quad  y_0 = z_0, \quad \mbox{ and }\\ 
\z_k &\in \mathcal{V}_k \quad \text{is any vector satisfying
  $G_{1/\L}(\z_k)^T(\y_k-\z_k) \ge 0$}\\
 & \qquad \qquad \text{and $F(\bar{\z}_k) \le F(\x_k) -
  \frac{1}{2\L} \norm{G_{1/\L}(\z_k)}^2$ and $\z_0 = \x_0$.}
\end{aligned} \end{equation}
Note that $\z_k$ may be constructed using \eqref{eq:z_rep}.

\begin{lem} \label{lem:subspace_property_holds} Suppose Algorithm~\ref{alg:CG_simplified} generates a sequence of
  iterates $\{\x_k\}_{k \ge 0}$ and a corresponding increasing
  sequence of subspaces $\{\mathcal{V}_k\}_{k
  \ge 1}$. Define the sequences
$\{\y_k\}_{k \ge 0}$ and $\{\z_k\}_{k \ge 0}$ as in
  \eqref{eq:CG_iterates}. Then for all $k \ge 1$
  \begin{equation} \label{eq:subspace_property} \z_{k-1} + \text{\rm span}\{\y_{k-1} - \z_{k-1},
    G_{1/\L}(\z_{k-1})\} \subset \mathcal{V}_k. 
    \end{equation} \end{lem}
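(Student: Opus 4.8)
The plan is to observe that the lemma reduces, after stripping away trivialities, to a single membership claim. Since $\y_{k-1}$ and $\z_{k-1}$ both lie in $\mathcal{V}_{k-1}$ by their construction in \eqref{eq:CG_iterates} (for $k=1$ this is the base case $\y_0=\z_0=\x_0=\bz\in\mathcal{V}_0$), and since $\mathcal{V}_{k-1}$ is a linear subspace contained in $\mathcal{V}_k$, we immediately get $\z_{k-1}\in\mathcal{V}_k$ and $\y_{k-1}-\z_{k-1}\in\mathcal{V}_k$. Thus the only thing that needs proof is $G_{1/\L}(\z_{k-1})\in\mathcal{V}_k$.

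To handle this, I would first write out the prox-gradient for the trust-region instance: with $f(\x)=\x^TA\x/2-\b^T\x$ we have $\nabla f(\z_{k-1})=A\z_{k-1}-\b$, and with $\Psi=\mathrm{i}_B$ the prox operator $\prox_{\Psi/\L}$ is the projection $\proj_B$, so
\[
G_{1/\L}(\z_{k-1})=\L\Bigl(\z_{k-1}-\proj_B\bigl((I-\tfrac{1}{\L}A)\z_{k-1}+\tfrac{1}{\L}\b\bigr)\Bigr).
\]
The point $\v:=(I-\tfrac{1}{\L}A)\z_{k-1}+\tfrac{1}{\L}\b$ lies in $\mathcal{V}_k$: indeed $\z_{k-1}\in\mathcal{V}_{k-1}\subseteq\mathcal{V}_k$, $\b=\p_0\in\mathcal{V}_k$, and Lemma~\ref{lem:CG_recurrence} gives $A\p_j\in\text{span}\{\p_0,\dots,\p_{j+1}\}\subseteq\mathcal{V}_k$ for $0\le j\le k-2$, hence $A\z_{k-1}\in A\mathcal{V}_{k-1}\subseteq\mathcal{V}_k$.

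The crux — and the place where the special geometry of the trust-region subproblem is used — is the elementary observation that because $B=\{\x:\norm{\x}\le\Delta\}$ is a ball centered at the \emph{origin}, the projection $\proj_B(\v)$ is always a nonnegative scalar multiple of $\v$ (it equals $\v$ if $\norm{\v}\le\Delta$ and $\Delta\v/\norm{\v}$ otherwise). Consequently $\proj_B$ maps any linear subspace into itself, so $\proj_B(\v)\in\mathcal{V}_k$; combined with $\z_{k-1}\in\mathcal{V}_k$ this gives $G_{1/\L}(\z_{k-1})\in\mathcal{V}_k$, and the lemma follows. I do not expect a genuine obstacle: the argument is bookkeeping with the subspace inclusions furnished by Lemma~\ref{lem:CG_recurrence} together with the scalar-multiple property of $\proj_B$, which is the one non-formal ingredient and the reason the trust-region Lanczos method fits the idealized framework — it would fail for a general $\Psi$.
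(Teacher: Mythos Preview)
Your proof is correct and follows essentially the same route as the paper's: reduce to showing $G_{1/\L}(\z_{k-1})\in\mathcal{V}_k$, use Lemma~\ref{lem:CG_recurrence} together with $\b=\p_0$ to get $\nabla f(\z_{k-1})=A\z_{k-1}-\b\in\mathcal{V}_k$, and then invoke the fact that $\proj_B$ is a radial scaling. The only differences are cosmetic: the paper packages the argument as an induction (because it simultaneously asserts existence of the $\z_k$'s via Lemma~\ref{lem: exist_z}), and it leaves the ``$\proj_B$ preserves linear subspaces through the origin'' step implicit, whereas you spell it out.
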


\begin{proof}  We prove the existence of the sequence $\{\z_k\}_{k\ge 0}$
  and the subspace properties \eqref{eq:subspace_property} by
  induction. For notational convenience, we define the affine space
\[ \bar{\mathcal{M}}_k = \z_{k-1} + \text{span} \{ \y_{k-1} - \z_{k-1}, G_{1/\L}(\z_{k-1})\}.\]
A simple computation shows $\bar{\mathcal{M}}_1 = \text{span}
\{G_{1/\L}(\bz)\}$ under the assumption that $\y_0
  = \z_0 = \bz$. We observe $G_{1/\L}(\bz) \in \text{span} \{-\b\} =
  \text{span} \{\p_0\} = \mathcal{V}_1$. Therefore it follows that
  $\bar{\mathcal{M}}_1 \subset \mathcal{V}_1$. Lemma~\ref{lem: exist_z}, then,
  implies the existence of a $\z_1$ satisfying the property in
  \eqref{eq:CG_iterates}. 

Next, suppose $\z_k$ exists and $\bar{\mathcal{M}}_k \subset
\mathcal{V}_k$ for all $k \ge 1$, so we can define $\bar{\mathcal{M}}_{k+1}$. To complete the induction, we need to show $\z_k + \text{span}\{\z_k-\y_k, G_{1/\L}(\z_k)\} \subset \mathcal{V}_{k+1}$. Since
$\mathcal{V}_k$ are increasing subspaces, we know $\z_k, \y_k \in
\bar{\mathcal{M}}_k \subset \mathcal{V}_k \subset \mathcal{V}_{k+1}$. 
Hence we need to prove $G_{1/L}(\z_k) \in \mathcal{V}_{k+1}$. By definition,
\[G_{1/\L}(\z_k) = L(\z_k - \text{proj}_B(\z_k-\nabla f(\z_k)/\L)).\]
In order to complete the induction, we need to show that $\nabla f(\z_k) \in \mathcal{V}_{k+1}$. First, we have $\nabla f(\z_k) = A\z_k -\b$. Because $\z_k \in \mathcal{V}_k$, there exist coefficients $\gamma_i \in \R$ for $i = 0, \ldots k-1$ such that 
\begin{align*}
\nabla f(\z_k) = A\z_k -\b = \sum_{i=0}^{k-1} \gamma_i A \p_i - \b.
\end{align*}
Lemma~\ref{lem:CG_recurrence} says for any $i =0, 1, 2, \ldots, k-1$ we have $A \p_i \in \mathcal{V}_{i+1} \subset V_{k+1}$. Moreover, we have $\b \in \mathcal{V}_{k+1}$.  Hence, it follows that $\nabla f(\z_k) \in \mathcal{V}_{k+1}$. Therefore, we have $\bar{\mathcal{M}}_{k+1} \subset
\mathcal{V}_{k+1}$ and by Lemma~\ref{lem: exist_z} the iterate $\z_{k+1}$ exists. 
\end{proof}

With this lemma in hand, we can construct the potential function for the trust-region Lanczos method.  In this theorem, $\L$ denotes the largest eigenvalue of $A$ and $\alpha\ge 0$ the smallest.

\begin{thm} Suppose Algorithm~\ref{alg:CG_simplified} generates a sequence of
  iterates $\{\x_k\}_{k \ge 0}$ and a corresponding increasing
  sequence of subspaces $\{\mathcal{V}_k\}_{k
  \ge 1}$. Define the sequences
$\{\y_k\}_{k \ge 0}$ and $\{\z_k\}_{k \ge 0}$ as in
  \eqref{eq:CG_iterates}. Then the iterates satisfy for all $k \ge 1$ the following linear convergence rate
\begin{align*}
\norm{\y_{k+1}-\x^*}^2 + \frac{ 2(F(\x_{k+1})-F(\x^*))}{\a} \le
  \left ( 1- \sqrt{ \frac{\a}{\L}} \right ) \left (
\norm{\y_k-\x^*}^2 + \frac{ 2(F(\x_{k})-F(\x^*))}{\a} 
  \right ),
\end{align*}
and the following sublinear convergence rate
\[
F(\x_k)-F(\x^*)\le \frac{2(L\norm{\x^*} + F(\bz)-F(\x^*))}{k(k+1)}.
\]
\label{thm:CG_convergence}
\end{thm}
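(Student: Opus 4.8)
The plan is to show that the trust-region Lanczos method of Algorithm~\ref{alg:CG_simplified}, together with the auxiliary sequences $\{\y_k\}$ and $\{\z_k\}$ from \eqref{eq:CG_iterates}, is a concrete instance of \emph{both} idealized frameworks --- Algorithm~\ref{alg:IA} (for the linear rate) and Algorithm~\ref{alg:IA_convex} (for the sublinear rate) --- with the affine space $\mathcal{M}_k$ taken to be the Krylov subspace $\mathcal{V}_k$. Once this correspondence is in place, the two rates follow at once from Theorem~\ref{thm:converge_IA} and from the potential inequality \eqref{eq:Phi_decr}. So the whole proof reduces to verifying, at $\mathcal{M}_k=\mathcal{V}_k$, the four defining properties of those frameworks: (i) $\x_k$ minimizes $F$ over $\mathcal{V}_k$; (ii) $\y_k$ is the point of $\mathcal{V}_k$ closest to $\x^*$; (iii) an auxiliary $\z_k\in\mathcal{V}_k$ exists meeting the Step-3 conditions of both frameworks; and (iv) the Step-4 subspace inclusions hold.

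Properties (i) and (ii) are essentially definitional: since $F=f+\mathrm{i}_B$, minimizing $F$ over $\mathcal{V}_k$ coincides with the constrained problem \eqref{eq: subspace_subprob} that Algorithm~\ref{alg:CG_simplified} solves, and \eqref{eq:CG_iterates} defines $\y_k$ exactly as in Step~1. For (iii) I apply Lemma~\ref{lem: exist_z} with $\x=\x_k$, $\y=\y_k$ (or simply take $\z_k=\x_k$ in the degenerate case $\x_k=\y_k$, using the last sentence of Lemma~\ref{lem:smooth}); since $\x_k,\y_k\in\mathcal{V}_k$, the vector $\z_k$ produced by \eqref{eq:z_rep} lies in $\mathcal{V}_k$ and satisfies \eqref{eq: condition_z_k}, which is precisely \eqref{eq:IA_z_convex}. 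Because $f$ is convex here ($A\succeq 0$), Lemma~\ref{lem:add_bound} upgrades \eqref{eq: condition_z_k} to $F(\bar{\z}_k)\le F(\x_k)-\frac{1}{2\L}\norm{G_{1/\L}(\z_k)}^2$, so this same $\z_k$ also satisfies \eqref{eq:IA_z} and the requirements listed in \eqref{eq:CG_iterates}. Property (iv) for Algorithm~\ref{alg:IA} is exactly Lemma~\ref{lem:subspace_property_holds}; for the larger subspace of Algorithm~\ref{alg:IA_convex} I note that the proof of Lemma~\ref{lem:subspace_property_holds} in fact shows $G_{1/\L}(\v)\in\mathcal{V}_{k+1}$ for every $\v\in\mathcal{V}_k$ --- because $\nabla f(\v)=A\v-\b\in\mathcal{V}_{k+1}$ by Lemma~\ref{lem:CG_recurrence} (using $\b=\p_0$), and projecting a vector onto the origin-centered ball $B$ keeps it in the subspace it already lies in --- so in particular $G_{1/\L}(\z_k),G_{1/\L}(\x_k)\in\mathcal{V}_{k+1}$, while $\x_k,\y_k,\z_k\in\mathcal{V}_k\subseteq\mathcal{V}_{k+1}$, which is all that Step~4 of Algorithm~\ref{alg:IA_convex} demands.

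With the correspondence established, the linear bound is Theorem~\ref{thm:converge_IA} applied with $\a$ the smallest eigenvalue of $A$ and $\L$ the largest: it gives $\Phi_{k+1}\le(1-\sqrt{\a/\L})\Phi_k$ for $\Phi_k=\norm{\y_k-\x^*}^2+2(F(\x_k)-F(\x^*))/\a$, which is the claimed inequality (and degenerates to $\Phi_{k+1}\le\Phi_k$ when $\a=0$). The sublinear bound comes from \eqref{eq:Phi_decr} for Algorithm~\ref{alg:IA_convex}, which yields $\Phi_k\le\Phi_1$ for the potential \eqref{eq:Phi_def_conv}; then, exactly as in the proof of Corollary~\ref{cor:IA_cvx_rate}, one bounds $\norm{\y_1-\x^*}\le\norm{\x_0-\x^*}$ and $F(\x_1)\le F(\x_0)$, and uses $\x_0=\bz$ (so $\norm{\x_0-\x^*}=\norm{\x^*}$ and $F(\x_0)=F(\bz)$) to obtain the stated $\mathcal{O}(1/k^2)$ estimate.

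The one genuine subtlety --- and the reason the earlier lemmas were arranged as they are --- lies in (iii)--(iv): the auxiliary $\z_k$ must be confined to $\mathcal{V}_k$ (not merely $\mathcal{V}_{k+1}$) while still fulfilling the Step-3 conditions of both frameworks, and the prox-gradient vectors $G_{1/\L}(\z_k)$, $G_{1/\L}(\x_k)$ must not escape the next Krylov subspace. Both facts are supplied by Lemmas~\ref{lem: exist_z}, \ref{lem:add_bound}, \ref{lem:CG_recurrence} and \ref{lem:subspace_property_holds}; once these are quoted, what remains is bookkeeping to align notation. In effect the hard part --- that the forward-backward step, here the projection of a steepest-descent step onto the ball, already lives in the Krylov space --- was carried out in Lemma~\ref{lem:subspace_property_holds}, so no new ideas are needed for this theorem.
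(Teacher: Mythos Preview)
Your proposal is correct and follows essentially the same route as the paper: identify $\mathcal{M}_k=\mathcal{V}_k$, invoke Lemma~\ref{lem:subspace_property_holds} for the subspace inclusion, and then apply Theorem~\ref{thm:converge_IA} and Corollary~\ref{cor:IA_cvx_rate}. Your write-up is in fact more complete than the paper's, since you explicitly verify the larger Step~4 inclusion needed for Algorithm~\ref{alg:IA_convex} (in particular that $G_{1/\L}(\x_k)\in\mathcal{V}_{k+1}$, using that projection onto an origin-centered ball is a radial scaling), which the paper's brief proof leaves implicit.
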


Note that the linear rate depends on $\alpha$ and loses its strength in the case that $\alpha=0$ (i.e., that $A$ is positive semidefinite but not positive definite).  See further remarks on this matter in the next section.  Also, note that the sublinear rate is based on Corollary~\ref{cor:IA_cvx_rate} under the further assumption that $\x_0=\bz$.

\begin{proof} The previous Lemma~\ref{lem:subspace_property_holds}
  shows $\z_{k-1} + \text{\rm span}\{\y_{k-1} - \z_{k-1},
    G_{1/\L}(\z_{k-1})\} \subset \mathcal{V}_{k+1}$ for all $k \ge 1$. Hence, the
    iterates of the idealized algorithm are the iterates of the CG
    under the identification $\mathcal{M}_k = \mathcal{V}_k$ for all
    $k \ge 1$ where $\mathcal{M}_k$ are defined in Algorithm~\ref{alg:IA}. By Theorem~\ref{thm:converge_IA} and Corollary~\ref{cor:IA_cvx_rate}, the result follows. 
\end{proof}

\section{Chebyshev polynomial analysis of the trust-region Lanczos method}
\label{sec:cheb}

In this section, we present an alternative analysis of trust-region Lanczos method
based on Chebyshev polynomial approximation.  Chebyshev polynomial approximation
is the basis of Daniel's \cite{Daniel} classical
proof of linear convergence
of conjugate gradient.  We present two Chebyshev analyses, one for the strongly
convex case, i.e., a positive definite $A$, and one for the convex case,
i.e., a positive semidefinite $A$.  The first analysis yields a linear
convergence rate of the form $(1-\sqrt{\a/\L})^k$ and the second
a sublinear rate of the form $1/k^2$.  Note that although the
sublinear rate is clearly inferior asymptotically to the linear rate,
it is nonetheless possible in the strongly
convex case when $\a\ll \L$ that the sublinear rate
outperforms the linear rate for small or medium values of $k$.

This analysis hinges on the eigenvalues 
$\a=\lambda_1\le\cdots\le\L=\lambda_n$ of $A$.  Because
the original problem is invariant under orthogonal change
of coordinates, we assume that a change of coordinates has
been applied so that $A$ is diagonal.  To emphasize this
assumption, we write $D$ instead of $A$ for the coefficient
matrix in this section: $f(\x)=\x^TD\x/2-\b^T\x$.
Furthermore, without loss of generality, the diagonal entries
of $D$ are in increasing order $\lambda_1\le\cdots \le \lambda_n$.
Although this section focuses on the convex case, i.e.,
$\lambda_1\ge 0 $, some of the
results derived here apply also to the nonconvex case as noted
at the end of this section.

Fix an iteration $k$.  
The trust-region Lanczos method finds $\x_k$ such that
$\x_k$ minimizes $f(\x)$ over the constraint set $\Vert\x\Vert\le \Delta$ and
$\x_k\in\Span\{\b,D^2\b,\ldots,D^{k-1}\b\}$, the dimension-$k$ Krylov
space.  The second constraint means
that $\x_k$ can be written as $p_k(D)\b$, where $p_k$ is a polynomial of
degree at most $k-1$.  In other words, $(\x_k)_i=p_k(\lambda_i)b_i$ for $i=1,\ldots,n$.

Let $\mu^*$ denote the KKT multiplier appearing
in Theorem~\ref{thm:TRS_kkt}.  Assume that $\mu^*>0$, so that the solution is on the
boundary.  In the case when $\mu^*=0$, the Gould et al.\
method reduces to classical (unconstrained) conjugate
gradient which has already been analyzed by \cite{Daniel}.
Thus, the following KKT conditions are satisfied by the optimizer $\x^*$:
\begin{align*}
  (D+\mu^*I)\x^*-\b &= \bz, \\
  \Vert\x^*\Vert &=\Delta.
\end{align*}
The first equation is written $(\lambda_i+\mu^*)x^*_i=b_i$ for $i=1,\ldots,n$, i.e.,
$$x_i^*=\frac{b_i}{\lambda_i+\mu^*}.$$

We would like the error to be bounded above  in terms of
$f(\x_0)-f(\x^*)$.
We start with an equality for
the optimal value of the problem:
\begin{align}
  f(\x_0)-f(\x^*) &= f(\bz)-f(\x^*) \notag \\
  & = -f(\x^*) \notag \\
  & = -\frac{1}{2}(\x^*)^TD\x^* + \b^T\x^* \notag \\
  & = \frac{1}{2}(\x^*)^T(D+\mu^*I)\x^* - (\x^*)^T((D+\mu^*I)\x^*-\b) \notag\\
  & = \frac{1}{2}(\x^*)^T(D+\mu^*I)\x^*.\label{eq:fxsbd1}
\end{align}
Since $(\x^*)^TD\x^*\ge 0$ and $(\x^*)^T\x^*=\Delta^2$, it follows from \eqref{eq:fxsbd1}
that
\begin{equation}
  f(\x_0)-f(\x^*)\ge \mu^*\Delta^2/2.
  \label{eq:fxsbd2}
\end{equation}

\subsection{Construction of two families of Chebyshev polynomials}
We now construct two families of Chebyshev polynomials useful
for the rest of the analysis.
Let $T_k(z)$ be the degree-$k$ first-kind Chebyshev polynomial.
Among the useful properties of the first-kind Chebyshev polynomial
are: $T_k([-1,1])\subset [-1,1]$ and $T_k(\pm 1)\in \{-1,1\}$.

For the first family, define
$$\hat T_k(z)=\left\{\begin{array}{ll}
1+T_k(z), & \mbox{$k$ even}, \\
1-T_k(z), & \mbox{$k$ odd}.
\end{array}
\right.
$$
Thus, $\hat T_k([-1,1])\subset [0,2]$, and furthermore, $\hat T_k(-1)=2$.
In addition, $\hat T_k'(-1)<0$, hence by monotonicity of Chebyshev polynomials
outside their natural domain, $\hat T_k(z)>2$ for all $z<-1$.
Next, define
$$q^{\rm A}(t)=c_k\hat T_k\left(\frac{2t-(\lambda_1+\lambda_n+2\mu^*)}{\lambda_n-\lambda_1}
\right)$$
where $c_k$ is chosen so that $q^{\rm A}(0)=1$.  We can get an estimate for $c_k$ as follows.  Let
\begin{equation}
    \zeta=\sqrt{\frac{\lambda_n+\mu^*}{\lambda_1+\mu^*}}.
    \label{eq:zetadef}
\end{equation}
For the case of even $k$, 
\begin{align*}
    1 &=q^{\rm A}(0)\\
    &= c_k\hat T_k\left(-\frac{\zeta^2+1}{\zeta^2-1}\right)\\ &=c_k\left(1+T_k\left(-\frac{\zeta^2+1}{\zeta^2-1}\right)\right) \\
    &=c_k\left(1 +\frac{1}{2}\left(\left(\frac{\zeta+1}{\zeta-1}\right)^k+\left(\frac{\zeta-1}{\zeta+1}\right)^k\right)\right) \\
    &\ge \frac{c_k}{2}\left(\frac{\zeta+1}{\zeta-1}\right)^k.
\end{align*}
The fourth line used a standard closed-form expression for Chebyshev polynomials (see, e.g., \cite{TrefethenBau}), while the
last line dropped two of the three terms in the fourth line.
The preceding inequality is rearranged into
\begin{equation}
    c_k\le 2\left(\frac{\zeta-1}{\zeta+1}\right)^k.
    \label{eq:c_kbd}
\end{equation}
The same estimate holds for odd $k$ with a similar analysis.

In sum, $q^{\rm A}(t)$ is a degree-$k$ polynomial
whose constant coefficient is 1. 
Furthermore, $q^{\rm A}(\lambda_1+\mu^*)=c_k \hat T(-1)=2c_k$
and $q^{\rm A}(\lambda_n+\mu^*)=c_k \hat T(1)\in\{0,2c_k\}$.
Also,
$q^{\rm A}(t)\ge 0$ over the interval $[\lambda_1+\mu^*,\lambda_n+\mu^*]$,
and $q^{\rm A}(t)\le 2c_k$ over this interval,
where $c_k$ is bounded as in \eqref{eq:c_kbd}.

Finally, one observes $q^{\rm A}(t)<1$ for $t\in[\lambda_1+\mu^*,\lambda_n+\mu^*]$;
this follows because 
$q^{\rm A}(\lambda_1+\mu^*) = c_k\hat  T(-1)=2c_k$, and
$1=q^{\rm A}(0)>q^{\rm A}(\lambda_1+\mu^*)$ by
the monotonicity as noted above, so $1>2c_k$, i.e., $c_k<1/2$.

For the second family of Chebyshev polynomials,
let
$$\dot T_k(z)=\left\{\begin{array}{ll}
1+T_k(z), & \mbox{$k$ odd},\\
1-T_k(z), & \mbox{$k$ even}.
\end{array}
\right.
$$
Then $\dot T_k([-1,1])\subset [0,2]$ and $\dot T_k(-1)=0$.  Furthermore,
$\dot T_k'(-1)=k^2$, and  $\dot T_k'(z)\le k^2$ for $z\in[-1,1]$.  The
latter two properties follow because $T_k'$ is a scaled Chebyshev polynomial
of the second kind whose properties are well known.
Now define
$$q^{\rm B}(t) =\frac{(\lambda_n+\mu^*)\dot T_{k+1}(2t/(\lambda_n+\mu^*)-1)}
{2(k+1)^2t}.$$
Since the numerator is a polynomial of $t$ of degree $k+1$ that vanishes at the origin,
the above quotient is also a polynomial, and we can determine its
value at the origin using L'H\^opital's rule, obtaining $q^{\rm B}(0)=1$.
By nonnegativity of $\dot T_{k+1}$ over $[-1,1]$, we observe that
$q^{\rm B}$ is nonnegative on the interval $[0,\lambda_n+\mu^*]$.
Moreover, the derivative of the numerator with respect to $t$
over this interval is bounded
above by $2(k+1)^2$, whereas the derivative of the denominator is equal to $2(k+1)^2$,
which means that $q^{\rm}(t)\le 1$ over $[0,\lambda_n+\mu^*]$.

Thus,  we see that $q^{\rm A}$, $q^{\rm B}$ have the following common properties:
both are polynomials of degree $k$ whose constant coefficient is 1, and
both take on values in $[0,1]$ over the interval $[\lambda_1+\mu^*,\lambda_n+\mu^*]$.
The distinction is that $q^{\rm A}$ shrinks over this interval exponentially
fast with $k$, but the base of the exponent tends to 1
when $\zeta \gg 1$, i.e., $\lambda_1+\mu^*\ll \lambda_n+\mu^*$.  On the other hand, $q^{\rm B}$
shrinks more slowly over this interval, but it shrinks 
independently of $\lambda_1$.  (Note that the construction of $q^{\rm B}$
did not involve $\lambda_1$.)

The next part of the analysis applies to both
$q^{\rm A}$ and $q^{\rm B}$, so let $q$ denote either of these.
Since $q(t)$ is a degree-$k$ polynomial whose constant coefficient
is 1, there exists a degree-$(k-1)$ polynomial denoted $\tilde q(t)$
such that $q(t)=t\tilde q(t)+1$.
Let $\y=-\tilde q(D+\mu^*I)\b$.   Observe first that $\y$ is feasible:
\begin{align*}
  \Vert \y\Vert^2 &= \sum_{i=1}^n y_i^2  \\
  &= \sum_{i=1}^n\tilde q(\lambda_i+\mu^*)^2b_i^2  \\
  & = \sum_{i=1}^n\frac{(q(\lambda_i+\mu^*)-1)^2}{(\lambda_i+\mu^*)^2}\cdot b_i^2  \\
  & = \sum_{i=1}^n (q(\lambda_i+\mu)-1)^2 \cdot \left(\frac {b_i}{\lambda_i+\mu^*}\right)^2 \\
  &\le \sum_{i=1}^n \left(\frac {b_i}{\lambda_i+\mu^*}\right)^2 \\
  & = \sum_{i=1}^n (x_i^*)^2 \\
  &= \Delta^2.
\end{align*}
The fifth line follows because $q(t)\in[0,1]$ for all
$t\in[\lambda_1+\mu^*,\lambda_n+\mu^*]$.

Next, we compare the objective value of $\x_k$ with that of $\x^*$, noting that
by the optimality property of the trust-region Lanczos algorithm, $f(\x_k)\le f(\y)$:
\begin{align*}
  f(\x_k)-f(\x^*) &\le   f(\y)-f(\x^*) \\
   &= \y^TD\y/2-\b^T\y - ((\x^*)^TD\x^*/2-\b^T\x^*) \\
  & = \y^T(D+\mu^*I)\y/2 - \b^T\y - ((\x^*)^T(D+\mu^*I)\x^*/2-\b^T\x^*) \\
  & \hphantom{=}\quad\mbox{} + ((\x^*)^T\x^*-\y^T\y)(\mu^*/2) \\
  &\equiv t_1+t_2,
\end{align*}
where we now analyze the two terms separately.

\begin{align}
  t_1 &= \y^T(D+\mu^*I)\y/2 - \b^T\y - ((\x^*)^T(D+\mu^*I)\x^*/2-\b^T\x^*) \notag \\
  &=(\y-\x^*)^T((D+\mu^*I)(\x^*+\y)/2- \b) \notag \\
  &=\sum_{i=1}^n \left(-\tilde q(\lambda_i+\mu^*)b_i-\frac{b_i}{\lambda_i+\mu^*}\right)
  \left((\lambda_i+\mu^*)\left(-\tilde q(\lambda_i+\mu^*)b_i+\frac{b_i}{\lambda_i+\mu^*}\right)/2 - b_i\right)\notag \\
  &= \frac{1}{2}\sum_{i=1}^n\frac{q(\lambda_i+\mu^*)^2b_i^2}{\lambda_i+\mu^*} \label{eq:t1analysis}
\end{align}
To obtain the fourth line, we substituted $\tilde q(t)=(q(t)-1)/t$ and then simplified.

For the other term,
\begin{align}
  t_2 &=  ((\x^*)^T\x^*-\y^T\y)(\mu^*/2) \notag \\
  & = \frac{\mu^*}{2}\sum_{i=1}^n \left[\frac{b_i^2}{(\lambda_i+\mu^*)^2}-b_i^2\tilde q(\lambda_i+
  \mu^*)^2\right] \notag \\
  &=\frac{\mu^*}{2}\sum_{i=1}^n \left[\frac{b_i^2}{(\lambda_i+\mu^*)^2}-
    \frac{b_i^2(q(\lambda_i+\mu^*)-1)^2}{(\lambda_i+\mu^*)^2}\right] \notag \\
  &=\frac{\mu^*}{2}\sum_{i=1}^n \frac{b_i^2}{(\lambda_i+\mu^*)^2} \cdot (2q(\lambda_i+\mu^*)-q(\lambda_i+\mu^*)^2) \label{eq:t2analysis}
\end{align}

\subsection{Analysis of first Chebyshev construction (linear rate)}
For this subsection, we assume $q\equiv q^{\rm A}$ in \eqref{eq:t1analysis} and
\eqref{eq:t2analysis}.  
\begin{align*}
  t_1 &= \frac{1}{2}\sum_{i=1}^nq^{\rm A}(\lambda_i+\mu^*)^2(\lambda_i+\mu^*)\cdot 
  \frac{b_i^2}{(\lambda_i+\mu^*)^2}  \\
  & \le \frac{1}{2} (\max_i q^{\rm A}(\lambda_i+\mu^*)^2)
  \cdot \sum_{i=1}^n\frac{b_i^2}{(\lambda_i+\mu^*)^2}(\lambda_i+\mu^*) \\
  & \le  2c_k^2
  \sum_{i=1}^n\frac{b_i^2}{(\lambda_i+\mu^*)^2}(\lambda_i+\mu^*) \\
  & = 2c_k^2
  \sum_{i=1}^n(x_i^*)^2(\lambda_i+\mu^*) \\
  & = 2c_k^2(\x^*)^T(D+\mu^*I)\x^* \\
  & = 4c_k^2(f(\x_0)-f(\x^*)) && \mbox{(by \eqref{eq:fxsbd1}).}
\end{align*}

Also
\begin{align*}
  t_2 &=  \frac{\mu^*}{2}\sum_{i=1}^n \frac{b_i^2}{(\lambda_i+\mu^*)^2}
  \cdot (2q^{\rm A}(\lambda_i+\mu^*)-q^{\rm A}(\lambda_i+\mu^*)^2) \\
  &=  \frac{\mu^*}{2}\sum_{i=1}^n \frac{b_i^2}{(\lambda_i+\mu^*)^2}
  \cdot q^{\rm A}(\lambda_i+\mu^*)(2-q^{\rm A}(\lambda_i+\mu^*)) \\
  &\le \frac{\mu^*}{2}\cdot(\max_i q^{\rm A}(\lambda_i+\mu^*)) \cdot
  \max_i(2-q^{\rm A}(\lambda_i+\mu^*))\cdot
  \sum_{i=1}^n \frac{b_i^2}{(\lambda_i+\mu^*)^2} \\
  &  = \frac{\mu^*\Delta^2}{2}\cdot(\max_i q^{\rm A}(\lambda_i+\mu^*)) \cdot
  \max_i(2-q^{\rm A}(\lambda_i+\mu^*))\\
  &\le \frac{\mu^*\Delta^2}{2}\cdot (2c_k) \cdot 2 \\
  &= 2\mu^*\Delta^2c_k \\
  &\le 4(f(\x_0)-f(\x^*))c_k && \mbox{(by \eqref{eq:fxsbd2}).}
\end{align*}

Combining, we obtain,
$$f(\x_k)-f(\x^*)\le 4(c_k^2+c_k)(f(\x_0)-f(\x^*)).$$
Since $c_k\le 1/2$, we can use $c_k^2\le c_k/2$ to bound the right-hand side and obtain the following theorem.

\begin{thm}
The trust-region Lanczos method applied to a problem
with a positive semidefinite $A$ converges
at the following rate:
$$f(\x_k)-f(\x^*)\le 6c_k(f(\x_0)-f(\x^*)),$$
where $c_k$ has an upper bound as in
\eqref{eq:c_kbd} with $\zeta$ defined by 
\eqref{eq:zetadef}.
\end{thm}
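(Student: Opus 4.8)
The plan is to assemble the bound from the two term estimates already prepared in this section. By the optimality property of the trust-region Lanczos iterate, $f(\x_k)\le f(\y)$ for every feasible $\y$ lying in the degree-$k$ Krylov space. I would take $\y=-\tilde q(D+\mu^*I)\b$ with $q\equiv q^{\rm A}$ and $q^{\rm A}(t)=t\tilde q(t)+1$; feasibility of this $\y$ was established above from the fact that $q^{\rm A}$ maps $[\lambda_1+\mu^*,\lambda_n+\mu^*]$ into $[0,1]$. This yields $f(\x_k)-f(\x^*)\le f(\y)-f(\x^*)=t_1+t_2$ with $t_1,t_2$ as in \eqref{eq:t1analysis} and \eqref{eq:t2analysis}.

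First I would bound $t_1$. Pulling $\max_i q^{\rm A}(\lambda_i+\mu^*)^2$ out of the sum in \eqref{eq:t1analysis}, using $q^{\rm A}(t)\le 2c_k$ on $[\lambda_1+\mu^*,\lambda_n+\mu^*]$, and recognizing $b_i^2/(\lambda_i+\mu^*)^2=(x_i^*)^2$ so that $\sum_i(x_i^*)^2(\lambda_i+\mu^*)=(\x^*)^T(D+\mu^*I)\x^*=2(f(\x_0)-f(\x^*))$ by \eqref{eq:fxsbd1}, one gets $t_1\le 4c_k^2(f(\x_0)-f(\x^*))$. Next I would bound $t_2$. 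In \eqref{eq:t2analysis} write the factor as $q^{\rm A}(2-q^{\rm A})$; on the relevant interval $0\le q^{\rm A}\le 2c_k$ and $2-q^{\rm A}\le 2$, so this factor is at most $4c_k$. Using $\sum_i b_i^2/(\lambda_i+\mu^*)^2=\norm{\x^*}^2=\Delta^2$ and then $\mu^*\Delta^2/2\le f(\x_0)-f(\x^*)$ from \eqref{eq:fxsbd2}, this gives $t_2\le 2\mu^*\Delta^2 c_k\le 4c_k(f(\x_0)-f(\x^*))$.

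Combining, $f(\x_k)-f(\x^*)\le 4(c_k^2+c_k)(f(\x_0)-f(\x^*))$. Finally, since $q^{\rm A}(0)=1$ while $q^{\rm A}(\lambda_1+\mu^*)=2c_k$, the monotonicity of $\hat T_k$ outside $[-1,1]$ noted during its construction forces $2c_k<1$, hence $c_k^2\le c_k/2$ and $4(c_k^2+c_k)\le 6c_k$; plugging in the estimate \eqref{eq:c_kbd} for $c_k$ with $\zeta$ from \eqref{eq:zetadef} then gives the claimed bound. The only real subtlety is checking that the uniform bounds $q^{\rm A}\le 2c_k$ and $2-q^{\rm A}\le 2$ hold on the entire interval $[\lambda_1+\mu^*,\lambda_n+\mu^*]$; this follows from $\hat T_k([-1,1])\subset[0,2]$ together with the affine reparametrization sending $[\lambda_1+\mu^*,\lambda_n+\mu^*]$ onto $[-1,1]$, both of which were set up when $q^{\rm A}$ was defined. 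No other step presents a genuine obstacle, since the construction of $q^{\rm A}$, the feasibility of $\y$, the $t_1+t_2$ decomposition, and the bound on $c_k$ are all in hand from the earlier subsections.
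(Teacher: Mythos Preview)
Your proposal is correct and follows essentially the same route as the paper: decompose $f(\x_k)-f(\x^*)\le t_1+t_2$ via the feasible Krylov point $\y=-\tilde q(D+\mu^*I)\b$, bound $t_1\le 4c_k^2(f(\x_0)-f(\x^*))$ and $t_2\le 4c_k(f(\x_0)-f(\x^*))$ using the uniform estimate $q^{\rm A}\le 2c_k$ together with \eqref{eq:fxsbd1}--\eqref{eq:fxsbd2}, and finish with $c_k<1/2$ to collapse $4(c_k^2+c_k)$ to $6c_k$. The argument, the intermediate constants, and the final step all match the paper exactly.
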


It follows from the definition of $\zeta$ that
$\zeta\le \sqrt{\lambda_n/\lambda_1}=\sqrt{\L/\a}$
and therefore $(\zeta-1)/(\zeta+1)\le 1-\sqrt{\a/\L}$, hence
$c_k\le 2(1-\sqrt{\a/\L})^k$ by \eqref{eq:c_kbd}.
Thus, the preceding bound yields a result comparable
to Theorem~\ref{thm:CG_convergence} in the
previous section.  The bound here may be much tighter
if $\mu^*\gg \lambda_1$.  In particular, a linear rate of convergence is still obtained when $\a=0$ provided $\mu^*>0$.

\subsection{Analysis of second Chebyshev construction (sublinear rate)}

For this subsection, we assume $q(t)\equiv q^{\rm B}(t)$ in \eqref{eq:t1analysis} and
\eqref{eq:t2analysis}. Then
\begin{align*}
  t_1 &=\frac{1}{2}\sum_{i=1}^n\frac{q^{\rm B}(\lambda_i+\mu^*)^{2}b_i^2}{\lambda_i+\mu^*} \\
  &= \frac{\lambda_n+\mu^*}{2}\sum_{i=1}^n\frac{q^{\rm  B}(\lambda_i+\mu^*)(\lambda_i+\mu^*)}
  {\lambda_n+\mu^*}\cdot q^{\rm B}(\lambda_i+\mu^*)\frac{b_i^2}{(\lambda_i+\mu^*)^2} \\
  &=\frac{\lambda_n+\mu^*}{2}\sum_{i=1}^n\frac{\dot T_{k+1}(2(\lambda_i+\mu^*)/(\lambda_n+\mu^*)-1)}
  {2(k+1)^2}\cdot q^{\rm B}(\lambda_i+\mu^*)\frac{b_i^2}{(\lambda_i+\mu^*)^2} \\
  &\le \frac{\lambda_n+\mu^*}{2}\sum_{i=1}^n\frac{2}{2(k+1)^2}\cdot 1\cdot \frac{b_i^2}{(\lambda_i+\mu^*)^2} \\
  & = \frac{(\lambda_n+\mu^*)\Delta^2}{2(k+1)^2}
\end{align*}

Next,
\begin{align*}
  t_2 &=\frac{\mu^*}{2}\sum_{i=1}^n \frac{b_i^2}{(\lambda_i+\mu^*)^2} \cdot
  q^{\mathrm{B}}(\lambda_i+\mu^*)(2-q^{\rm B}(\lambda_i+\mu^*)) \\
  &\le \mu^*\sum_{i=1}^n \frac{b_i^2}{(\lambda_i+\mu^*)^2} \cdot q^{\mathrm{B}}(\lambda_i+\mu^*) \\
  &= \mu^*\sum_{i=1}^n\frac{b_i^2}{(\lambda_i+\mu^*)^2} \cdot
  \frac{(\lambda_n+\mu^*)\dot T_{k+1}(2(\lambda_i+\mu^*)/(\lambda_n+\mu^*)-1)}
       {2(k+1)^2(\lambda_i+\mu^*)} \\
  &= \frac{\lambda_n+\mu^*}{2(k+1)^2}\sum_{i=1}^n \frac{b_i^2}{(\lambda_i+\mu^*)^2} \cdot
       \dot T_{k+1}(2(\lambda_i+\mu^*)/(\lambda_n+\mu^*)-1)\cdot\frac{\mu^*}{(\lambda_i+\mu^*)} \\
  &\le \frac{\lambda_n+\mu^*}{2(k+1)^2}\sum_{i=1}^n \frac{b_i^2}{(\lambda_i+\mu^*)^2}
       \cdot 2 \cdot 1  \\
  &= \frac{(\lambda_n+\mu^*)\Delta^2}{(k+1)^2}.
\end{align*}
Combining the two results derived above, 
we obtain 
\begin{align*}
f(\x_k)-f(\x^*)&\le \frac{3(\lambda_n+\mu^*)\Delta^2}{2(k+1)^2} \\
&\le \frac{3(\L\Delta^2 + 2(f(\x_0)-f(\x^*)))}{2(k+1)^2} \\
&\le \frac{3(\L\Vert\x_0-\x^*\Vert^2 + 2(f(\x_0)-f(\x^*)))}{2(k+1)^2}.
\end{align*}
We have recovered essentially the same bound as in 
Theorem~\ref{thm:CG_convergence}.

We conclude this section with a few remarks on the
nonconvex case of the trust-region subproblem, that
is, the case that $\lambda_1<0$.  In this case,
much of the analysis of this section goes through unchanged.
The analysis shows that the method still converges at a linear rate
of convergence to the optimizer except in the hard
case discussed below.  Furthermore, the
optimizer found is a global minimizer rather than
an approximate stationary point in the case of 
general $F$.  

The difficulty in establishing a
rate is that there is no prior positive lower
bound on $\lambda_1+\mu^*$ in the nonconvex case;
it can be an arbitrarily small positive number, so
the linear rate can be arbitrarily poor.  It is
even possible that $\lambda_1+\mu^*=0$.
The case that $\lambda_1+\mu^*=0$ is called the
``hard case.''  The hard case can also be encountered
on intermediate steps of the trust-region Lanczos method
when $\lambda_1<0$.
When the hard case occurs on intermediate steps, 
Algorithm~\ref{alg:CG_simplified} is no longer an accurate
description of the Gould et al.\ algorithm because in that
case, their method leaves the current Krylov space by adding
a perturbation in a different direction.  Thus, the convergence of
the trust-region Lanczos method in the general nonconvex case
appears to be unknown.

A different algorithm for the TRS proposed by Hazan and Koren \cite{HazanKoren}, and later simplified and improved by Wang and Xia \cite{WangXia}, attains the optimal sublinear rate of $1/k^2$ in all cases including the hard case.  These two algorithms both require a preliminary approximate computation of $\lambda_1$, the smallest eigenvalue of $A$, and therefore use information beyond the gradient/prox model considered herein.  Yet another first-order approach also able to handle the hard case was very recently proposed by Beck and Vaisbourd \cite{BeckVaisbourd}. Another recent approach based on classical matrix iteration is due to Adachi et al.\ \cite{Adachi} (also handling the hard case).

\section{Conclusions}
We have showed that several algorithms for constrained and composite optimization can be analyzed by placing them in a framework of an idealized algorithm and then arguing that they decrease a potential at a certain rate.  One algorithm in this class is the trust-region Lanczos algorithm; our analysis yields rate that can also be found via classical Chebyshev polynomial approximation theory.

The trust-region Lanczos algorithm represents an extension of classical conjugate gradient to a constrained problem. An immediate open question is what rate is possible for that method in the nonconvex case, especially the hard case. In particular, the methods mentioned in the previous section with $O(1/k^2)$ nonconvex rates do not have the strong optimality property of the trust-region Lanczos method (i.e., that each iteration finds the exact minimizer in a growing sequence of subspaces) in the convex case.

A broader open question is whether conjugate gradient extends to constrained or composite optimization of minimizing a quadratic function in which the constraint term $\Psi$ is not ellipsoidal.

\bibliographystyle{plain}
\bibliography{bibliography}
\end{document}